\newtheorem{theorem}{Theorem}[section]
\newtheorem{lemma}[theorem]{Lemma}
\newtheorem{proposition}[theorem]{Proposition}
\newtheorem{corollary}[theorem]{Corollary}
\newtheorem{remark}[theorem]{Remark}
\newcommand{\R}{\mathbb{R}}
\title{H\"older continuity of solutions for a class of drift-diffusion equations}
\author{Quoc-Hung Nguyen \thanks{E-mail address: qhnguyen@amss.ac.cn, Academy of Mathematics and Systems Science, Chinese Academy of Sciences, Beijing, 100190, China.}, Yannick Sire\thanks{E-mail address: ysire1@jhu.edu, Department of Mathematics, Johns Hopkins University
		3400 N. Charles Street, Baltimore, MD 21218, USA.} and Le Xuan Truong \thanks{E-mail address: lxuantruong@ueh.edu.vn, University of Economics Ho Chi Minh City (UEH), Ho Chi Minh City, Vietnam.}}
\begin{document}

\date{}
\maketitle

\begin{abstract}
We provide several regularity results for non-homogeneous drift-diffusion equations with applications to general dissipative SQG. Our results unify in a rather simple way several previously known results. We build the estimates on an algebraic identity (for the refined energy argument) which relates any integral operator with pure powers of the laplacian. 
\end{abstract}

\tableofcontents

\section{Introduction}	

We consider drift-diffusion equations involving rather general integral operators (with bounded {\sl elliptic} measurable kernels). We aim at proving H\"older regularity of solutions whenever the right hand side has enough integrability. We apply also our results to the dissipative SQG equation, i.e. whenever the divergence-free vector-field in the transport term is related in a non-trivial way to the solution of the equation by a Darcy law.  

The standard dissipative SQG equation has been introduced by Constantin, Majda and Tabak (see \cite{CMT} for instance) as a toy model for regularity in three-dimensional flows. Besides qualitative properties of the equations, such as long-time behaviour, a major issue is the regularity of weak solutions. This is the issue we address here for a general version of drif-diffusion equations. 

\vspace{0.2cm}
The standard {\sl critical} dissipative SQG equation is 
\begin{align}\label{standardSQG}
&\partial_t u+{\bf B} \cdot \nabla u+(-\Delta)^{1/2} u =0~~\text{in}~~\mathbb{R}^2 \times\mathbb{R},
\end{align}
where ${\bf B}$ is divergence free and related to $u$ by ${\bf B}=\nabla^\perp (-\Delta)^{-1/2}u$. The regularity of (weak) solutions has been established in \cite{CaVa} for data satisfying an {\sl energy type} inequality (see also \cite{KNV,KN}). Several generalizations of this equation can be considered, where the Fourier multiplier $(-\Delta)^\frac12$ is replaced by $(-\Delta)^s$, yielding sub critical whenever $s>\frac12$ (see \cite{CF}) and super critical whenever $s<\frac12$ (see \cite{Cons-Wu,consWu}) equations. One could also consider the case of bounded domains such as in \cite{Con-Igna}. Several tools have been developed to handle drift-diffusion PDEs, whenever the diffusion is given by a Fourier multiplier. The mathematical difficulties substantially increase when one considers other types of integral operators like the ones described below, which are not Fourier multipliers. 

\vspace{0.2cm}
In the present work, we consider the following general drift-diffusion equation  for $d \geq 2$
\begin{align}\label{eq1}
&\partial_t u + {\bf B}.\nabla u+\mathcal{L}_t u = g ~~\text{in}~~\mathbb{R}^d \times\mathbb{R}=\mathbb{R}^{d+1},
\end{align}
where $g\in L^1(\mathbb{R}^{d+1})$ and $\mathbf{B} : \mathbb{R}^d \times \mathbb{R} \to \mathbb{R}^d$ is a vector field satisfying $\operatorname{div}(\mathbf{B})=0$. Furthermore, the integral operator $\mathcal{L}_t$ ($t \in \mathbb{R}$) is defined by  
\begin{align*}
\mathcal{L}_t  u(x) = \text{P.V.}\int_{\mathbb{R}^d}(u(x)-u(y))\mathbf{K}_t(x,y)dy,
\end{align*}
where the kernel has some structure we will discuss below but is of homogeneity $-2s$ similarly to the fractional laplacian (hence ensuring some ellipticity). Notice that parabolic equations in such generality (for the diffusion) have been considered for instance in \cite{CCV}. For the general operator we are interested in, there is of course no extension as a local operator in one more dimension. This fact, already present in \cite{CCV}, introduces several non trivial difficulties and one has to develop a somehow {\sl intrinsically} nonlocal/integral method to obtain H\"older regularity. Contrary to \cite{CCV}, we actually {\sl use} the extension in one more dimension by relating energy estimates (the crucial part of De Giorgi's argument) to the ones obtained for pure powers of the laplacian. This allows in particular to simplify the proof in \cite{CCV}. We focus on the most difficult case of the critical/supercritical regime $s\leq 1/2$.

\vspace{0.2cm}
As mentioned previously, in order to tackle the case of general integral diffusion as described above, we take here a different route from for \cite{CCV}, bridging in a natural way the $s-$harmonic extension of a function $u$ to its Gagliardo semi-norms, which control from above and below the kernel ${\bf K}_t$. This approach is new  and we believe could be used successfully in other contexts. We now explain this link since this is the core of the argument. In proving H\"older regularity by De Giorgi's method, the crucial step is to go from an $L^2$ type estimate to local boundedness, via a special energy inequality obtained by testing the equation with suitable truncations of the unknown. Using a trick we describe below, one can relate the purely nonlocal energy inequality coming from $\mathcal L_t$ to an energy inequality in terms of $(-\Delta )^s$, which can be "localized" using the argument of Caffarelli and Silvestre \cite{CS07}. For a function $f : \mathbb{R}^d \to \mathbb{R}$, consider the extension function  $f^\star : \mathbb{R}^d \times [0, \infty) \to \mathbb{R}$ that satisfies 
\begin{equation}\label{a01}
\begin{cases}
\text{div}\big(z^{1-2s}\nabla f^\star(x,z)\big) = 0,\\
f^\star(x, 0) = f(x). 
\end{cases}
\end{equation}
Then it is well-known by now \cite{CS07} that in the weak sense 
\[
(-\Delta)^sf = \lim\limits_{z \to 0}\big(- z^{1-2s}\partial_z f^\star\big). 
\]
Furthermore, the extension $f^\star$ can be determined by using Poisson formula
\[
f^\star(x, z) =  \int_{\mathbb{R}^d}P(x - y, z)f(y)dy,
\]
where $P$ is the Poisson kernel for equation \eqref{a01} and is defined by
\[
P(x, z) = c_{d,s}\frac{z^{2s}}{\left(|x|^2 + |z|^{2}\right)^{\frac{d}{2}+s }} = \frac{1}{z^d}P\big(\frac{x}{z}, 1\big),
\]
where $c_{d,s}$ is a normalizing constant such that $\|P(\cdot, 1)\|_{L^1(\mathbb{R}^d)} = 1$. In this paper, we will denote the extension $(u(\cdot, t))^\star$ of $u(\cdot, t)$ by 
\begin{align*}
u^\star(x,z, t) = (u(\cdot, t))^\star(x, z).
\end{align*}
As previously mentioned, the starting point of the proof of H\"older regularity by De Giorgi's method is an energy-type inequality for truncations of the weak solution $u$. The following lemma, of algebraic nature, relates the energy associated to $\mathcal L_t$ to the one associated to the fractional laplacian.
\begin{lemma}\label{crucialLemma}
	Let $h$ be  a positive function in $\mathbb{R}^d$ and $f\in H^{s}(\mathbb{R}^d)$ for $s \in (0,1)$. Then the following estimate holds 
	\begin{align}
	&\int_{\mathbb{R}^d} h^2 f^+ \mathcal{L}_t f dx+ \int_{\mathbb{R}^d}\int_{\mathbb{R}^d} f^+(x) f^+(y)\frac{(h(x)-h(y))^2}{|x-y|^{d+2s}} dxdy\\&\quad \sim  	\int_{\mathbb{R}^d} h^2 f^+ (-\Delta)^{s} f dx+ C \int_{\mathbb{R}^d}\int_{\mathbb{R}^d} f^+(x) f^+(y)\frac{(h(x)-h(y))^2}{|x-y|^{d+2s}},\nonumber
	\end{align}
	where $f^+$ is the positive part of $f$. 
\end{lemma}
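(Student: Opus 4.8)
The plan is to reduce both energies to one and the same symmetrized bilinear form by means of a pointwise algebraic identity, and then to compare the two resulting expressions term by term using the ellipticity of $\mathbf K_t$ (the two-sided bound $\lambda|x-y|^{-d-2s}\le\mathbf K_t(x,y)\le\Lambda|x-y|^{-d-2s}$, together with symmetry of $\mathbf K_t$, which may be assumed after replacing $\mathbf K_t$ by its symmetrization). First I would symmetrize: for any symmetric kernel $K$ with associated operator $\mathcal L_K f(x)=\mathrm{P.V.}\int(f(x)-f(y))K(x,y)\,dy$, renaming $x\leftrightarrow y$ gives
\[
\int_{\mathbb R^d} h^2 f^+\mathcal L_K f\,dx=\tfrac12\iint_{\mathbb R^d\times\mathbb R^d}\big(h^2(x)f^+(x)-h^2(y)f^+(y)\big)\big(f(x)-f(y)\big)K(x,y)\,dx\,dy,
\]
all the integrals below being absolutely convergent by Cauchy--Schwarz, the bound $K\le\Lambda|x-y|^{-d-2s}$, and $f\in H^s$.

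The heart of the matter is the elementary identity, valid for all $a,b\in\mathbb R$ and $p,q\ge0$ and checked by a short case analysis on the signs of $a$ and $b$,
\[
\big(p^2a^+-q^2b^+\big)(a-b)=\big(pa^+-qb^+\big)^2-a^+b^+(p-q)^2+\big(p^2a^+b^-+q^2b^+a^-\big),
\]
the last parenthesis being nonnegative. Inserting it with $a=f(x),\ b=f(y),\ p=h(x),\ q=h(y)$ into the symmetrized formula yields, for any symmetric $K$,
\[
\int h^2 f^+\mathcal L_K f\,dx=A_1[K]+A_3[K]-A_2[K],
\]
where $A_1[K]=\tfrac12\iint(h(x)f^+(x)-h(y)f^+(y))^2K\ge0$, $A_2[K]=\tfrac12\iint f^+(x)f^+(y)(h(x)-h(y))^2K\ge0$, and $A_3[K]=\tfrac12\iint RK\ge0$ with $R(x,y)=h^2(x)f^+(x)f^-(y)+h^2(y)f^+(y)f^-(x)\ge0$. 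Apply this with $K=\mathbf K_t$ and with the kernel of $(-\Delta)^s$ (which is comparable to $|x-y|^{-d-2s}$). Since the integrands defining $A_1$ and $A_3$ are nonnegative, the ellipticity bounds give $A_1[\mathbf K_t]\sim A_1[(-\Delta)^s]$ and $A_3[\mathbf K_t]\sim A_3[(-\Delta)^s]$, while $0\le A_2[K]\le \Lambda D$ for both choices, where $D=\iint f^+(x)f^+(y)\frac{(h(x)-h(y))^2}{|x-y|^{d+2s}}$. Hence, for a constant $C\ge\Lambda$,
\[
\int h^2 f^+\mathcal L_t f\,dx+CD\ \sim\ A_1[\mathbf K_t]+A_3[\mathbf K_t]+CD\ \sim\ A_1[(-\Delta)^s]+A_3[(-\Delta)^s]+CD\ \sim\ \int h^2 f^+(-\Delta)^s f\,dx+CD,
\]
which, up to the labeling of the Gagliardo constant, is exactly the asserted equivalence.

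The one genuinely delicate point is this last bookkeeping: $A_2$ is the sole negative contribution, and it has to be absorbed by the added Gagliardo term, which is why the statement must carry a (possibly large) constant $C$ in front of $D$. Everything else is soft: the algebraic identity and its sign structure --- the nonnegativity of $R$, $A_1$, and $A_3$ --- are precisely what make the term-by-term ellipticity comparison legitimate, and it is the truncation $f^+$, rather than $f$ itself, that produces this clean splitting.
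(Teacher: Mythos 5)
Your proof is correct, and it follows the same overall architecture as the paper: symmetrize both energies into double integrals, make a pointwise algebraic comparison of the integrands, then use the two\-/sided bound on $\mathbf K_t$. The difference is in the algebraic step. The paper invokes its Lemma~A (the inequality \eqref{Z5}), a two-sided comparison proved by case analysis together with absorption arguments, and whose correction term is the larger quantity $\left(|f(x)|f^+(y)+f^+(x)|f(y)|\right)(h(x)-h(y))^2$; you instead use an exact identity, $\left(p^2a^+-q^2b^+\right)(a-b)=\left(pa^+-qb^+\right)^2-a^+b^+(p-q)^2+\left(p^2a^+b^-+q^2b^+a^-\right)$ (which indeed holds identically, as one checks by expanding with $a=a^+-a^-$, $a^+a^-=0$), giving the clean decomposition $\int h^2f^+\mathcal L_Kf=A_1[K]-A_2[K]+A_3[K]$ with $A_1,A_3\ge 0$ and $A_2$ exactly the $f^+(x)f^+(y)$ Gagliardo correction appearing in the lemma's statement. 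This buys you two things: no case analysis or absorption constants, and a correction term matching the statement more closely than Lemma~A does; the sign structure then makes the kernel-ellipticity step termwise and transparent, just as in the paper. Two small caveats: your final display carries the constant $C$ on both sides, whereas the statement has coefficient $1$ on the left-hand Gagliardo term --- this is immaterial for how the lemma is used (Proposition \ref{prop2} only needs the lower bound $\int\phi^2u_+\mathcal L_tu\ge C_0\int\phi^2u_+(-\Delta)^{s}u-C_1D$), and the paper's own Lemma~A has the same feature; and your claim that all integrals converge absolutely is optimistic for a general positive $h$ (one needs, e.g., $D$ and the Gagliardo seminorm of $hf^+$ finite, as they are for the cutoffs used later), but the paper is equally informal on this point.
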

Lemma \ref{crucialLemma} ensures that one can always compare the quadratic forms $\int_{\mathbb{R}^d} h^2 f^+ \mathcal{L}_t f dx$ and $$\int_{\mathbb{R}^d} h^2 f^+ (-\Delta)^{s} f dx$$ by the Gagliardo semi-norms, which are under control by Sobolev inequalities. The $s-$harmonic extension described above then allows to control  the term $\int_{\mathbb{R}^d} h^2 f^+ \mathcal L_t f dx$ in a local fashion as in \cite{CaVa}. 

\subsection*{Main results}
We now describe the precise assumptions on the kernel and our main results. We introduce the space $\mathbb{X}^s$ which is defined by
\[
\mathbb{X}^s =
\begin{cases}
BMO(\R^d) & \text{if $s = 1/2$}, \\
\\
C^{1-2s}(\R^d) & \text{if $0 < s < 1/2$}.
\end{cases}
\] 
Moreover, we always assume that $\mathbf{B}$ is divergence free almost-everywhere in time $t \in \R$.  The kernel $\mathbf K_t$ satisfies

$$
\mathbf{K}_t:  \mathbb{R}^d \times \mathbb{R}^d \times \mathbb{R} \to [0, \infty), ~~ (x, y, t) \mapsto \mathbf{K}_t(x, y)
$$ 
is assumed to satisfy the following conditions for a given $s \in (0, 1/2]$:
\begin{itemize}
	\item[(i)] $\mathbf{K}_t$ is a measurable function;
	\item[(ii)] for every $t \in \mathbb{R}$, $\mathbf{K}_t$ is symmetric which means that
	\[
	\mathbf{K}_t(x, y) = \mathbf{K}_t(y, x), \quad \text{for a. e.} ~ (x, y, t) \in \mathbb{R}^d \times \mathbb{R}^d \times \mathbb{R};
	\]
	\item[(iii)] there exists a constant $\Lambda > 1$ such that
	\[
	\Lambda^{-1} \leq |x - y|^{d + 2s} \mathbf{K}_t(x, y) \leq \Lambda, ~~ \text{for a. e.} ~ (x, y) \in \mathbb{R}^d \times \mathbb{R}^d \times \mathbb{R};
	\]
		\item[(iv)] for a. e. $x \in \mathbb{R}^d$
		\[
			\Big|\int_{\mathbf{S}^{d-1}}\theta\mathbf{K}_t(x,x+\rho \theta)d\mathcal{H}^{d-1}(\theta)\Big| \leq \Lambda\rho^{-d}.
		\]
\end{itemize} 
In the following we drop the dependence in the time variable  $t$ in $\mathcal L_t$. Such operators have been considered in many works and clearly generalize the case of the fractional laplacian. See in particular \cite{CCV} for an equation related to ours. 

\medskip
Our first theorem is an H\"older estimate for solutions of the equation \eqref{eq1}.
\begin{theorem}\label{CaVa}Assume that the divergence free  vector field $\mathbf{B}$ satisfies the conditions 
	\begin{align*}
	\sup\limits_{(x, t) \in \mathbb{R}^{d+1}}\left|\fint_{x + B_1} \mathbf{B}(y, t)dy \right| \leq M_1 \quad \text{and} \quad  \|\mathbf{B}\|_{L^\infty(\mathbb{R}; \mathbb{X}^s)} \leq M_2,
	\end{align*}
	for some $M_1, M_2 \geq 1$ and $g$ belongs to the class 
	$$
	L^{1}(\mathbb{R}^{d+1})\cap L^{q}(\mathbb{R}^{d+1}),
	$$ 
	for some $q > (d + 1)/2s$. Le $u$ be a  weak solution to \eqref{eq1} with
	$$
	u \in L^\infty(\mathbb{R};L^2(\mathbb{R}^d))\cap L^2(\mathbb{R};H^s(\mathbb{R}^d)).
	$$ 
	Then there exists $\delta_0$ depending only on $d, s, \Lambda, q$ and $M_2$ such that the following holds true
	\begin{itemize}
		\item  Global in time:
		\begin{equation}\label{z17}
		\sup\limits_{(x, t) \in \R^{d+1}}\|u\|_{C^\alpha(Q_{\frac{\delta_0}{M_1}}(x,t))} + \|u\|_{L^\infty(\R^{d+1})} \leq C(M_2) \left(\|g\|_{L^1(\R^{d+1})} + \|g\|_{L^q(\R^{d+1})}\right),
		\end{equation}
		\item  Local in time:
		\begin{align}\label{z17'}
		\sup\limits_{x\in \R^{d}, \, t > 1}\|u\|_{C^\alpha(Q_{\frac{\delta_0}{M_1}}(x,t))} + \|u\|_{L^\infty(\R^{d}\times (1,\infty))} \leq C(M_2) \left(\|u(0)\|_{L^2(\R^{d})} \right. \\
		\left. + \|g\|_{L^1(\R^{d+1}_+)} + \|g\|_{L^q(\R^{d+1}_+)}\right). \nonumber
		\end{align}
	\end{itemize}
	for some $\alpha \in \big(0, 1 - (d+1)/2sq\big)$.
\end{theorem}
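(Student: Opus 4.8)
The plan is to run De Giorgi's iteration scheme in the nonlocal parabolic setting, but with the crucial energy estimate routed through Lemma~\ref{crucialLemma} so that the $s$-harmonic extension of Caffarelli--Silvestre becomes available. I would proceed in four stages. First, a \emph{local energy inequality}: test the equation \eqref{eq1} with $h^2 (u-k)^+$ for a cutoff $h$ and a truncation level $k$. The transport term $\mathbf B\cdot\nabla u$ contributes a commutator-type term that is handled by the two assumptions on $\mathbf B$ (the averaged bound $M_1$ rescales the parabolic cylinder to $Q_{\delta_0/M_1}$, while the $\mathbb X^s$-norm bound $M_2$ controls the drift via a commutator estimate as in \cite{CaVa,CCV}); the right-hand side $g$ contributes a term bounded using $g\in L^q$ with $q>(d+1)/2s$ and H\"older/Sobolev. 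The diffusive term $\int h^2 (u-k)^+\mathcal L u\,dx$ is exactly the quantity appearing on the left of Lemma~\ref{crucialLemma} (applied to $f=u-k$), so up to Gagliardo seminorm terms that are themselves absorbed by fractional Sobolev embedding it is comparable to $\int h^2 (u-k)^+(-\Delta)^s(u-k)\,dx$.

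Second, \emph{localization via the extension}: write $\int h^2 (u-k)^+(-\Delta)^s(u-k)\,dx$ using the Caffarelli--Silvestre extension $((u-k)^+)^\star$, turning it into a weighted Dirichlet integral $\int_{\R^{d+1}_+} z^{1-2s}|\nabla ((u-k)^+)^\star|^2\,h^2\,dx\,dz$ plus lower-order error terms (coming from the fact that $h^2$ is not the extension of its own trace, and from replacing $((u-k)^+)^\star$ by $(u-k)^{+\star}$ — standard since truncation and extension nearly commute up to controlled errors). This gives a genuinely local energy inequality on $\R^{d+1}_+$, of precisely the type handled in \cite{CaVa}.

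Third, the \emph{De Giorgi oscillation lemma}: from the local energy inequality, deduce that if $u$ is suitably small in $L^2$ on a large cylinder and $g$ is small, then $u\le 1/2$ (say) on a smaller cylinder — the first De Giorgi lemma, proved by a nonlinear iteration of the energy inequality over shrinking cylinders using the Sobolev trace inequality for the weight $z^{1-2s}$ and absorbing the $g$ term since $q>(d+1)/2s$ guarantees the scaling gain. Then the \emph{second} De Giorgi lemma (an intermediate-value/measure-decrease lemma) propagates smallness of the measure of the level set; iterating the two lemmas across dyadic scales yields a fixed oscillation decay $\operatorname{osc}_{Q_r} u \le (1-\theta)\operatorname{osc}_{Q_{2r}} u$, i.e. $C^\alpha$ regularity with $\alpha\in(0,1-(d+1)/2sq)$. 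The $L^\infty$ bound comes from the same first lemma applied at unit scale together with the energy estimate $\|u\|_{L^\infty_t L^2_x}+\|u\|_{L^2_t H^s_x}\lesssim \|g\|_{L^1\cap L^q}$ (resp.\ the data $\|u(0)\|_{L^2}$ in the local-in-time statement), obtained by testing \eqref{eq1} with $u$ itself and using coercivity of $\mathcal L$ from assumption (iii). Finally, undoing the scaling $x\mapsto M_1 x$ produces the factor $C(M_2)$ and the cylinder radius $\delta_0/M_1$, and summing the global/local energy identities over time gives the two displayed estimates \eqref{z17}, \eqref{z17'}.

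The main obstacle I expect is the transport term: controlling $\int h^2 (u-k)^+\,\mathbf B\cdot\nabla u\,dx$ uniformly in the truncation level $k$ and the scale, using only $\operatorname{div}\mathbf B=0$, the $\mathbb X^s$ bound, and the averaged bound — this is where the BMO ($s=1/2$) versus $C^{1-2s}$ ($s<1/2$) dichotomy enters and where one must mimic the commutator estimates of \cite{CaVa} in the present general-kernel framework; getting the constants to depend only on $M_2$ (and not $M_1$) after rescaling is the delicate point. A secondary technical issue is making the ``truncation commutes with extension up to good errors'' step rigorous for the rough operator $\mathcal L$, but Lemma~\ref{crucialLemma} is precisely designed to absorb those errors into Gagliardo seminorms, so this should go through routinely.
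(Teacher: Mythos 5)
Your stages one through three do track the paper's architecture: the local energy inequality obtained by testing with $\phi^2u_+$ and converting $\int\phi^2u_+\mathcal{L}_t u\,dx$ into a weighted extension energy via Lemma \ref{crucialLemma} is Proposition \ref{prop2}; the first and second De Giorgi lemmas are Propositions \ref{lem1} and \ref{lem2} (the latter through the weighted isoperimetric inequality of Proposition \ref{prop.a1}), assembled into the oscillation result Proposition \ref{osclem1}; and the $L^\infty$ bound is a separate global iteration, Proposition \ref{GLinf}. The genuine gap is in your last stage, which is precisely where the hypotheses on $\mathbf{B}$ matter.

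You propose to obtain $C^\alpha$ by iterating the oscillation decay over dyadic cylinders and then ``undoing the scaling $x\mapsto M_1x$'', with the transport term handled by commutator estimates. For $s\le 1/2$ this cannot work as stated: under the natural scaling the drift becomes $\sigma^{2s-1}\mathbf{B}(\sigma x,\sigma^{2s}t)$, whose local averages are only bounded by $M_1$ (and are amplified by $\sigma^{2s-1}$ when $s<1/2$), so after a single zoom the hypotheses of Proposition \ref{osclem1} are destroyed; the only scale-invariant (or improving) quantity is the $\mathbb{X}^s$ seminorm of the drift \emph{after its local average has been removed}. The paper's proof of Theorem \ref{XX} therefore recenters at every step: it introduces trajectories $x_k(t)$ solving $\dot x_k(t)=\fint_{B_4}b_{k-1}(\sigma y+\sigma^{2s}x_k(t),\sigma^{2s}t)\,dy$ and renormalized data $F_k$, $b_k$, $g_k$, $\mathbf{K}_{k,t}$ in this moving frame, so that $\fint_{B_4}b_k=0$ and all constants depend only on $M_2$; the radius $\delta_0/M_1$ then emerges from the number $k_0\sim\log M_1/|\log\sigma|$ of initial iterations needed to absorb the displacement generated by the mean drift, not from one dilation by $M_1$. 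A second, related omission is the tail control forced by nonlocality: the rescaled iterates are not globally bounded by $2$, so Proposition \ref{osclem1} requires the condition $\int_{B_1^c}(u-2)_+|x|^{-d-2s}\,dx\le\varepsilon_2$, and the proof must propagate by induction the growth bound $|F_k|\le 2+c_0\lambda^\star\big(|\sigma^{1-2s}x+x_k(t)|^{2s/3}-1\big)_+$ (Step 2 of Theorem \ref{XX}); your dyadic oscillation scheme never addresses this, and without it the iteration of the two De Giorgi lemmas fails for the rescaled functions. A minor inaccuracy: in the local energy inequality the drift is handled simply by divergence-freeness and H\"older with $\|\mathbf{B}\|_{L^\infty_t L^{d/s}(B_2)}$, not by a commutator estimate; the $\mathbb{X}^s$ hypothesis enters only through the recentering ODE and the local bounds on $b_k$.
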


In the following we use the notations
\begin{equation*}
X_i = (t_i, x_i), \quad \|X_1-X_2\|_s = |x_1 - x_2| + |t_1 - t_2| ^{\frac{1}{2s}},
\end{equation*}
\begin{equation*}
\mathcal{R}^1_\lambda := \left\{X_1 = (x_1, t_1), X_2 = (x_2, t_2) \in \R^{d+1}_+ \, : \, 
\begin{cases}
t_1, t_2 > \lambda, \\
\\
\|X_1 - X_2\|_{s} \leq  \frac{C(M_2,M_1)\lambda}{|\log(|\lambda)|+1}
\end{cases}
\right\},
\end{equation*}
\begin{equation*}
\mathcal{R}^2_\lambda := \left\{X_1 = (x_1, t_1), X_2 = (x_2, t_2) \in \R^{d+1}_+ \, : \, \begin{cases}
t_1, t_2 > \lambda^{2s}, \\
\\
\|X_1 - X_2\|_{s} \leq  C(M_3)\min\{\lambda,\lambda^{2s}\}
\end{cases}	 \right\}.
\end{equation*}
From Theorem \ref{CaVa} we derive the following corollary.

\begin{corollary} \label{XX1}Assume $g=0$ in $\mathbb{R}^d\times (0,\infty)$.  Then, 
	\begin{equation}\label{XX2}
	||u(t)||_{L^\infty(\mathbb{R}^d)}\leq C t^{-\frac{d}{4s}}||u(0)||_{L^2(\mathbb{R}^d)}.
	\end{equation}
	Moreover, 
	\begin{itemize}
		\item If $s=\frac{1}{2}$ and $\mathbf{B}$ satisfies 
		$$ 
		\sup\limits_{(x,t)\in \mathbb{R}^{d+1}_+}\left|\fint_{x + B_1} \mathbf{B}(y, t)dy \right| \leq M_1, \quad \|\mathbf{B}\|_{L^\infty(\mathbb{R}_+;BMO (\mathbb{R}^d))} \leq M_2, \quad (M_1, M_2 \geq 1)
		$$ 
		then, for any  $\lambda\in (0,\frac{1}{2})$, we have
		\begin{equation}\label{XX3}
		\sup_{X_1, X_2 \in \mathcal{R}^1_{\lambda}}\frac{|u(X_1) - u(X_2)|}{||X_1-X_2||_{\frac{1}{2}}^\alpha}\leq C(M_2) \lambda^{-d/2-\alpha}||u(0)||_{L^2(\mathbb{R}^d)}.
		\end{equation}
		\item If $0<s<\frac{1}{2}$ and $\mathbf{B}$ satisfies 
		$$
		\|\mathbf{B}\|_{L^\infty(\mathbb{R}_+;L^\infty\cap C^{1-2s} (\mathbb{R}^d))} \leq M_3, \quad (M_3 \geq 1)
		$$  
		then, for any $\lambda\in (0,\frac{1}{2})$, we have
		\begin{equation}\label{XX4}
		\sup_{X_1, X_2 \in \mathcal{R}^2_{\lambda}}\frac{|u(X_1) - u(X_2)|}{||X_1-X_2||_s^\alpha}\leq C(M_3) \lambda^{-d/2-\alpha}||u(0)||_{L^2(\mathbb{R}^d)}.
		\end{equation}
	\end{itemize}
\end{corollary}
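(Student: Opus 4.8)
The plan is to derive Corollary \ref{XX1} from Theorem \ref{CaVa} purely by a scaling (parabolic self-similarity) argument, exploiting the $-2s$-homogeneity of the kernel. First I would establish the decay estimate \eqref{XX2}. Starting from a weak solution $u$ on $\mathbb{R}^d\times(0,\infty)$ with $g=0$, for a fixed time $T>0$ I rescale: set $u_T(x,t) = u(Tx, T^{2s}t)$ (and correspondingly $\mathbf{B}_T(x,t) = T^{2s-1}\mathbf{B}(Tx,T^{2s}t)$, which remains divergence free, and a rescaled kernel $\mathbf{K}^T_t(x,y) = T^{d+2s}\mathbf{K}_{T^{2s}t}(Tx,Ty)$ which still satisfies (i)--(iv) with the \emph{same} constant $\Lambda$ by homogeneity). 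The rescaled function solves an equation of the same type with $g=0$. Applying the $L^\infty$ bound from \eqref{z17'} (local in time form, at time $t=1$, say, after a further harmless time-translation so that the energy data is taken at the right moment) gives $\|u_T(\cdot,1)\|_{L^\infty} \lesssim \|u_T(\cdot,t_0)\|_{L^2}$ for a suitable $t_0<1$; unravelling the scaling, $\|u_T(\cdot,1)\|_{L^\infty} = \|u(\cdot,T^{2s})\|_{L^\infty}$ while $\|u_T(\cdot,t_0)\|_{L^2} = T^{-d/2}\|u(\cdot,T^{2s}t_0)\|_{L^2}$, and since the $L^2$ norm of a weak solution is non-increasing in time (the basic energy inequality, $g=0$), this is $\le T^{-d/2}\|u(0)\|_{L^2}$. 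Writing $t = T^{2s}$ yields $\|u(\cdot,t)\|_{L^\infty} \le C t^{-d/(4s)}\|u(0)\|_{L^2}$, which is \eqref{XX2}.

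Next, for the Hölder estimates \eqref{XX3} and \eqref{XX4}, I combine \eqref{XX2} with the Hölder seminorm bound from \eqref{z17} (which I apply to the equation \emph{restarted} from a later time, using $u(\cdot,\lambda^{2s})$ — resp.\ $u(\cdot,\lambda)$ — as initial data). Concretely: fix $\lambda\in(0,\tfrac12)$. By \eqref{XX2}, $u$ at time $\tfrac{\lambda^{2s}}{2}$ (or a comparable time) is bounded in $L^\infty$ by $C\lambda^{-d}\|u(0)\|_{L^2}^2$-type quantities; more usefully, $\|u(\cdot,\tfrac{\lambda^{2s}}{2})\|_{L^2}\le\|u(0)\|_{L^2}$ and $\|u(\cdot,\tfrac{\lambda^{2s}}{2})\|_{L^\infty}\le C\lambda^{-d/2\cdot\frac{2s}{2s}}\dots$ — I then apply the \emph{local in time} estimate \eqref{z17'} (with $g=0$) on the shifted solution $\tilde u(x,t) = u(x,\,t+\tfrac{\lambda^{2s}}{2})$, whose initial $L^2$ norm is controlled. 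For $s=\tfrac12$ this gives, on $Q_{\delta_0/M_1}$ centered at any $(x,t)$ with $t>1$, a $C^\alpha$ bound by $C(M_2)\|u(\cdot,\tfrac{\lambda}{2})\|_{L^2}$. The final step is another rescaling to convert a $C^\alpha$-bound at unit scale into one on the parabolic ball of radius $\sim\lambda$: apply the above to $u_\lambda(x,t)=u(\lambda x,\lambda^{2s}t)$, note its initial $L^2$ norm is $\lambda^{-d/2}\|u(0)\|_{L^2}$, deduce the $C^\alpha$-seminorm of $u_\lambda$ on $Q_{\delta_0/M_1}$ is $\le C(M_2)\lambda^{-d/2}\|u(0)\|_{L^2}$, and then undo the scaling: the $\|\cdot\|_s$-Hölder seminorm of $u$ picks up a factor $\lambda^{-\alpha}$, landing exactly on $C(M_2)\lambda^{-d/2-\alpha}\|u(0)\|_{L^2}$. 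The set $\mathcal{R}^1_\lambda$ (resp.\ $\mathcal{R}^2_\lambda$) is precisely the set of pairs whose $\|\cdot\|_s$-distance is $\lesssim\lambda$ and which live at times $>\lambda^{2s}$ (resp.\ with the stated constraints), which is exactly the region the rescaled local-in-time estimate covers after translating the time origin; the logarithmic factor in $\mathcal{R}^1_\lambda$ comes from the BMO norm of the rescaled drift at scale $\lambda$, since the $BMO$ seminorm of $\mathbf{B}_\lambda$ is scale-invariant but the mean over $B_1$ (the $M_1$ condition) deteriorates by $|\log\lambda|$ when one passes to the rescaled vector field.

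The main obstacle I anticipate is the bookkeeping for the drift term under rescaling in the critical case $s=\tfrac12$. The two hypotheses on $\mathbf{B}$ in Theorem \ref{CaVa} scale differently: the $\mathbb{X}^s=BMO$ seminorm is invariant under the parabolic rescaling $\mathbf{B}\mapsto T^{2s-1}\mathbf{B}(T\cdot,T^{2s}\cdot) = \mathbf{B}(T\cdot,T\cdot)$ when $s=\tfrac12$ (so $M_2$ is preserved — good, $\delta_0$ stays fixed), but the small-ball-average quantity $M_1$ does \emph{not} behave well: controlling $\fint_{B_1}\mathbf{B}_\lambda$ requires subtracting the mean and using $BMO$, which on a ball of radius $1/\lambda$ produces the $\log(1/\lambda)$ loss visible in the definition of $\mathcal{R}^1_\lambda$. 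Getting this logarithm to appear in precisely the right place — and confirming that the radius $\delta_0/M_1$ of the parabolic cylinder in \eqref{z17'} translates, after unscaling, to a region of the form $\|X_1-X_2\|_s\lesssim \lambda/(|\log\lambda|+1)$ — is the only genuinely delicate point; the case $0<s<\tfrac12$ is cleaner because $C^{1-2s}$ scales favorably and $\mathbf{B}\in L^\infty$ directly controls $M_1$, so one only needs $\|X_1-X_2\|_s\lesssim\min\{\lambda,\lambda^{2s}\}$ with no logarithm. Everything else is the routine combination of the semigroup-type decay \eqref{XX2}, restarting the evolution, and homogeneity of the operator class.
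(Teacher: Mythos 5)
Your proposal is correct and essentially coincides with the paper's own argument: \eqref{XX2} is obtained by applying the $L^\infty$-from-$L^2$ bound of Remark \ref{ZA} to the parabolic rescaling of $u$, and \eqref{XX3}--\eqref{XX4} follow by applying the local-in-time H\"older estimate of Theorem \ref{CaVa} to the rescaled solution $u_\lambda$ while tracking how $M_1$ and $M_2$ transform, the BMO comparison of ball averages producing exactly the $|\log\lambda|$ loss in the $s=\tfrac12$ case and the factor $\lambda^{-d/2-\alpha}$ arising from the rescaled initial $L^2$ norm plus undoing the H\"older seminorm scaling. The only immaterial slip is that the unit-ball average of the rescaled drift corresponds to an average of $\mathbf{B}$ over a ball of radius $\lambda$ (not $1/\lambda$); the logarithmic factor is of course the same.
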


We now draw some conclusion in the case of dissipative SQG, namely

\begin{align}\label{SQG}
&\partial_t u +{\bf B}\cdot  \nabla u+\mathcal L_t u =g~~\text{in}~~\mathbb{R}\times\mathbb{R}^d,
\end{align}
where $s=\frac12$ and ${\bf B}= \nabla^\perp \mathcal L_t^{-1}u$. Notice that the latter is of order zero and is a Calderon-Zygmund operator. 

\begin{corollary}
Let $u$ be a weak solution of \eqref{SQG}. If $g\in \left(L^1\cap L^{q}\right)(\mathbb{R}^{d+1})$ for $q>d+1$, then  $u$ is H\"older continuous in space and time.
\end{corollary}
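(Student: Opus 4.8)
The plan is to deduce the corollary from Theorem \ref{CaVa} applied with $s=\tfrac12$, for which $\mathbb{X}^{1/2}=BMO(\mathbb{R}^d)$ and the integrability exponent $(d+1)/2s$ equals $d+1$, exactly the threshold assumed here. Everything then comes down to verifying the two quantitative hypotheses on the drift $\mathbf{B}=\nabla^\perp\mathcal L_t^{-1}u$. A weak solution is a priori only in the natural energy class $L^\infty(\mathbb{R};L^2(\mathbb{R}^d))\cap L^2(\mathbb{R};H^{1/2}(\mathbb{R}^d))$, which is not enough to place $\mathbf{B}$ in $L^\infty(\mathbb{R};BMO(\mathbb{R}^d))$; so the first and decisive step is to upgrade $u$ to $L^\infty(\mathbb{R}^{d+1})$, and only then read off the bounds on $\mathbf{B}$.

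First I would prove the global bound $K:=\|u\|_{L^\infty(\mathbb{R}^{d+1})}\le C(\|g\|_{L^1(\mathbb{R}^{d+1})}+\|g\|_{L^q(\mathbb{R}^{d+1})})$. The point is that this bound is insensitive to the transport term: in every energy/level-set estimate the drift contributes $\int_{\mathbb{R}^d}\mathbf{B}\cdot\nabla u\,(u-k)^+\,dx=\tfrac12\int_{\mathbb{R}^d}\mathbf{B}\cdot\nabla\big((u-k)^+\big)^2\,dx=0$ since $\operatorname{div}\mathbf{B}=0$. Thus the portion of the proof of Theorem \ref{CaVa} that yields the $L^\infty$ part of \eqref{z17} --- on the nonlocal side run through Lemma \ref{crucialLemma}, which trades $\int h^2 f^+\mathcal L f$ for $\int h^2 f^+(-\Delta)^{1/2}f$ up to controlled Gagliardo terms, followed by the classical fractional De Giorgi iteration --- uses only $\operatorname{div}\mathbf{B}=0$ and $q>d+1$, with no appeal to $M_1,M_2$. (Alternatively: mollify $\mathbf{B}$, apply the global $L^\infty$ bound of Theorem \ref{CaVa} to the regularised equation with a constant depending only on $\operatorname{div}\mathbf{B}_\varepsilon=0$, and pass to the limit.)

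Next, with $u(\cdot,t)\in L^\infty\cap L^2$ uniformly in $t$ --- the $L^2$ control being the energy bound $N:=\sup_t\|u(\cdot,t)\|_{L^2(\mathbb{R}^d)}<\infty$ --- I would invoke the fact, recalled just before the statement, that $\mathbf{B}=\nabla^\perp\mathcal L_t^{-1}u$ is a zeroth-order Calder\'on--Zygmund operator applied to $u$, with constants uniform in $t$ because the kernel bounds (i)--(iv) are. Boundedness $L^\infty\to BMO$ of such operators gives $\|\mathbf{B}\|_{L^\infty(\mathbb{R};BMO(\mathbb{R}^d))}\le C(d,\Lambda)K=:M_2$. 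For the averaged bound I would fix $(x,t)$ and split $u(\cdot,t)=u\chi_{B_2(x)}+u\chi_{B_2(x)^c}$: the near piece is controlled by the $L^2\to L^2$ bound of the operator and Cauchy--Schwarz over $B_1$, hence by $\|u(\cdot,t)\|_{L^\infty}$; the far piece is estimated from the $|z-w|^{-d}$ decay of the kernel against $\|u(\cdot,t)\|_{L^2}$, again by Cauchy--Schwarz. This gives $\sup_{(x,t)}|\fint_{x+B_1}\mathbf{B}(y,t)\,dy|\le C(d,\Lambda)(K+N)=:M_1<\infty$.

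Finally, $M_1,M_2$ being finite, I would apply the global-in-time part of Theorem \ref{CaVa} (the equation is posed on all of $\mathbb{R}\times\mathbb{R}^d$, so $u$ is an ancient solution and no initial-time restriction is needed): there are $\delta_0=\delta_0(d,\Lambda,q,M_2)>0$ and $\alpha\in(0,1-(d+1)/q)$ with $\sup_{(x,t)\in\mathbb{R}^{d+1}}\|u\|_{C^\alpha(Q_{\delta_0/M_1}(x,t))}<\infty$; since these cylinders have fixed radius $\delta_0/M_1$, cover $\mathbb{R}^{d+1}$, and carry uniformly bounded local $C^\alpha$ seminorms, it follows that $u\in C^\alpha(\mathbb{R}^{d+1})$, i.e. $u$ is H\"older continuous jointly in space and time. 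The main obstacle is the circularity in the first two steps (control of $\mathbf{B}$ requires control of $u$, and conversely); it is broken precisely by the drift-insensitivity of the $L^\infty$ bound, whose clean justification amounts to isolating that step in the proof of Theorem \ref{CaVa}. A secondary technical point is the averaged bound $M_1<\infty$, which genuinely uses both the $L^\infty$ and $L^2$ information on $u$ together with the $L^2$-boundedness of $\nabla^\perp\mathcal L_t^{-1}$.
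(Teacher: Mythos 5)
Your proposal is correct and follows essentially the same route as the paper: the paper's Proposition \ref{GLinf} is exactly your drift-insensitive $L^\infty$ bound (its proof uses only $\operatorname{div}\mathbf{B}=0$), after which the paper likewise invokes the Calder\'on--Zygmund property of $\nabla^\perp\mathcal L_t^{-1}$ to place $\mathbf{B}$ in $L^\infty_t(BMO\cap L^2)$ (the $L^2$ bound giving your $M_1$ directly) and then applies Theorem \ref{CaVa}. Your extra detail on the averaged bound $M_1$ and the covering argument simply fills in steps the paper leaves implicit.
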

\begin{proof}
Proposition \ref{GLinf} gives that $u(t) \in L^\infty_t(\mathbb{R},L^\infty_x\cap L^2_x(\mathbb R^{d}))$; hence by Calderon-Zygmund estimate one has ${\bf B} \in L^\infty_t (BMO_x\cap L^2_x)$. Applying Theorem \ref{CaVa} gives actually that $u$ is $C^\alpha$. 
\end{proof}

\medskip

Throughout this work we denote  
\[
	B_r = [-r, r]^d, ~ Q_{r,\tau} = B_r \times [-\tau, 0], ~ Q_r = Q_{r,r},  
\]
\[
	B_{r,\rho}^{\star}=B_r\times (0,\rho), ~ B^\star_r = B^\star_{r,r}, ~ Q^\star_r = B_r^{\star} \times [-r, 0].
\]

\setcounter{equation}{0}

\section{From $L^2$ to $L^\infty$}

\qquad This section is devoted to the first step of De Giorgi's method in proving the H\"older continuity of solutions. We prove that weak solution is actually bounded.  Our result is the following:

\begin{proposition}\label{GLinf} Let $q > \frac{d+1}{2s}$ and assume that $g$ belongs to the class $$L^{\frac{2(d+1)}{d+1+2s}}(\mathbb{R}^{d+1})\cap L^{q}(\mathbb{R}^{d+1}).$$ Then there exists a unique solution of the equation \eqref{eq1} satisfying following estimates
	\begin{equation}\label{z8}
		\sup_{t\in \mathbb{R}}\int_{\mathbb{R}^d}u(t)^2dx+\int_{\mathbb{R}^{d+1}}\big|(-\Delta)^{\frac{s}{2}} u\big|^2 dxdt\leq C\|g\|_{L^{\frac{2(d+1)}{d+1+2s}}(\mathbb{R}^{d+1})}^2,
	\end{equation}
	\begin{equation}\label{z9}
		\|u\|_{L^{\infty}(\mathbb{R}^{d+1})} \leq C\|g\|_{L^{\frac{2(d+1)}{d+1+2s}}(\mathbb{R}^{d+1})}^{\frac{2(q-d-2s)}{2(q-d-2s)+qd}}||g||_{L^{q}(\mathbb{R}^{d+1})}^{\frac{qd}{2(q-d-2s)+qd}}.
	\end{equation}
\end{proposition}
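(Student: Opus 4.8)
The plan is to run the classical De~Giorgi scheme; the only features of the nonlocal operator that enter are its coercivity on $H^s$ and the sign of its commutator with truncations, both immediate from the two-sided bound (iii), while the divergence-free hypothesis on $\mathbf{B}$ kills the transport term in every energy identity and the ``localizable'' Sobolev estimates on which De~Giorgi's method relies are obtained through the Caffarelli--Silvestre $s$-harmonic extension. I would proceed in three steps. \emph{Existence, uniqueness and \eqref{z8}.} Since \eqref{eq1} is linear in $u$, I would construct the solution by a Galerkin/Lions scheme on truncated slabs $(-n,\infty)\times\mathbb{R}^d$ with zero datum at $t=-n$, using that for each $t$ and $v\in H^s$
\[
\int_{\mathbb{R}^d}v\,\mathcal{L}_tv\,dx=\tfrac12\int_{\mathbb{R}^d}\!\int_{\mathbb{R}^d}\big(v(x)-v(y)\big)^2\mathbf{K}_t(x,y)\,dx\,dy\ \ge\ c\,\Lambda^{-1}\,\big\|(-\Delta)^{s/2}v\big\|_{L^2}^2
\]
by (iii) and the Gagliardo identity, so $\mathcal{L}_t$ is uniformly coercive and bounded on $H^s$. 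Pairing \eqref{eq1} with $u$, the transport term drops because $\int_{\mathbb{R}^d}(\mathbf{B}\cdot\nabla u)\,u\,dx=\tfrac12\int_{\mathbb{R}^d}\mathbf{B}\cdot\nabla(u^2)\,dx=-\tfrac12\int_{\mathbb{R}^d}(\operatorname{div}\mathbf{B})\,u^2\,dx=0$, leaving the differential inequality $\tfrac{d}{dt}\|u(t)\|_{L^2}^2+2c\Lambda^{-1}\|(-\Delta)^{s/2}u(t)\|_{L^2}^2\le 2\int_{\mathbb{R}^d}|g|\,|u|\,dx$. Integrating in time and estimating the right-hand side by H\"older together with the relevant fractional space-time Sobolev embedding, then absorbing the resulting dissipation term by Young's inequality --- the exponent $\tfrac{2(d+1)}{d+1+2s}$ on $g$ being exactly the one dictated by this bookkeeping --- one gets \eqref{z8} uniformly in $n$ (the decay $\|u(t)\|_{L^2}\to0$ as $t\to-\infty$ being built into the construction). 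Letting $n\to\infty$ produces a weak solution on $\mathbb{R}^{d+1}$ satisfying \eqref{z8}; uniqueness in $L^\infty_tL^2_x\cap L^2_tH^s_x$ is immediate since the difference of two solutions solves \eqref{eq1} with $g\equiv0$, for which the same estimate forces it to vanish.

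\emph{Truncated energy inequality.} For $\ell>0$ put $w_\ell=(u-\ell)^+$. Testing \eqref{eq1} against $w_\ell$ --- which has to be done after a Steklov regularization in time, $\partial_tu$ being only distributional and $\mathcal{L}_t$ nonlocal --- the transport term again cancels, and for the diffusion term one uses the pointwise inequality $\big(u(x)-u(y)\big)\big(w_\ell(x)-w_\ell(y)\big)\ge\big(w_\ell(x)-w_\ell(y)\big)^2$ (valid since $r\mapsto(r-\ell)^+$ is nondecreasing and $1$-Lipschitz) with the lower bound in (iii) to get $\int_{\mathbb{R}^d}w_\ell\,\mathcal{L}_tu\,dx\ge c\Lambda^{-1}\|(-\Delta)^{s/2}w_\ell\|_{L^2}^2$. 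Integrating in time one obtains
\[
U(\ell):=\sup_{t}\|w_\ell(t)\|_{L^2}^2+c\int_{\mathbb{R}^{d+1}}\big|(-\Delta)^{s/2}w_\ell\big|^2\ \le\ C\int_{\{u>\ell\}}|g|\,w_\ell .
\]
Lemma \ref{crucialLemma} is \emph{not} needed here: only the coercivity of $\mathcal{L}_t$, i.e. the lower bound in (iii), is used, the sharper comparison with $(-\Delta)^s$ being required only in the later oscillation-decay step.

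\emph{De~Giorgi iteration and \eqref{z9}.} Replacing $(u,g)$ by $(-u,-g)$ it suffices to bound $u$ from above. Fix $M>0$ to be chosen, set $C_k=M(1-2^{-k})$, $w_k=w_{C_k}$, $U_k=U(C_k)$, and note $\{u>C_k\}=\{w_{k-1}>M2^{-k}\}$. Feeding into the inequality of the previous step the Sobolev/Gagliardo--Nirenberg estimate bounding a suitable $L^p(\mathbb{R}^{d+1})$-norm of $w_k$ by a power of $U_k$ (obtained from the $s$-harmonic extension of $w_k$ as in \cite{CaVa}, $w_k$ being supported on a set of finite measure once $\ell>0$), Chebyshev's inequality $|\{u>C_k\}|\le(M2^{-k})^{-p}\|w_{k-1}\|_{L^p}^p$, and H\"older on $\{u>C_k\}$, one reaches a recursion of the form
\[
U_k\ \le\ C\,\|g\|_{L^q}^{2}\,M^{-\mu}\,b^{k}\,U_{k-1}^{1+\beta},\qquad b>1,\ \mu>0,
\]
in which $\beta>0$ \emph{precisely} when $q>\tfrac{d+1}{2s}$ --- this being exactly the threshold $q>\tfrac{p}{p-2}$ for the associated choice of $p$. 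The standard fast-geometric-convergence lemma then gives $U_k\to0$, hence $(u-M)^+\equiv0$ and $u\le M$, provided the starting energy $U_0\le C\|g\|_{L^{2(d+1)/(d+1+2s)}}^2$ (from \eqref{z8}) lies below a threshold of the form $c\,\|g\|_{L^q}^{-2/\beta}M^{\mu/\beta}$; choosing $M$ minimal for which this holds and keeping track of the exponents reproduces precisely \eqref{z9}.

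\emph{Main obstacle.} The scheme being otherwise routine, the genuine work lies in the truncated energy inequality: for a merely weak solution of this general time-dependent, nonlocal drift-diffusion equation one must regularize in time, verify that the transport term really cancels under $\operatorname{div}\mathbf{B}=0$ with the available integrability, and check that the commutator arising from truncating inside $\mathcal{L}_t$ retains the favorable sign in the limit; the extension-based Sobolev estimates then have to be combined with this in a uniform way. The Galerkin construction also requires some care for so general an operator, while the exponent bookkeeping, although tedious, is mechanical once $\beta>0$ is in hand.
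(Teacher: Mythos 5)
Your proposal is essentially the paper's own argument: the a priori bound \eqref{z8} by testing with $u$ (transport killed by $\operatorname{div}\mathbf{B}=0$, coercivity from hypothesis (iii)), followed by a De Giorgi level-set iteration on truncations $(u-C_k)_+$ using the truncation energy inequality, the space--time Sobolev bound $\|u_{k-1}\|_{L^{2(d+1)/(d+1-2s)}}^2\lesssim U_{k-1}$ and Chebyshev, with $q>\frac{d+1}{2s}$ giving exactly the superlinear recursion needed for $U_k\to 0$, and with the two-norm dependence in \eqref{z9} recovered by normalization/scaling bookkeeping. The only deviations are cosmetic and harmless: you use a global-in-time truncated energy inequality (justified via the approximations with zero data at $t=-n$, or the $L^2_tH^s_x$ membership giving vanishing $L^2$ norm along a sequence $t\to-\infty$), whereas the paper dispenses with any initial-time control by averaging over $\tau\in[T_{k-1},T_k]$ with $T_k=1-2^{-k}$ and then using time translation, and you track the level $M$ directly instead of normalizing $g$ and rescaling.
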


\begin{proof} By a standard argument, we get the existence and uniqueness of a solution $u$ for the equation \eqref{eq1} satisfying the estimate \eqref{z8}.

\vspace{0.2cm}\medskip 
\quad In order to prove \eqref{z9} we only need to show that 
	\begin{equation}\label{z10}
		\|u^{+}\|_{L^{\infty}([1,\infty)\times\mathbb{R}^{d})}\leq C,
	\end{equation}
	provided that
	\[
		\|g\|_{L^{\frac{2(d+1)}{d+1+2s}}(\mathbb{R}^{d+1})} + \|g\|_{L^{q}(\mathbb{R}^{d+1})}\leq 1.
	\] 
We shall use the De Giorgi's iteration. For the constant $\lambda > 1$ that will be chosen later, we consider the levels 
\[
	C_k = \lambda(1-2^{-k})
\]
and the truncation functions $u_k=(u - C_k)_+$. By using $u_k$ as test function of  \eqref{eq1}, we obtain: 
\begin{equation*}
	U_k \leq C\int_{\mathbb{R}^d} u_k(\tau)^2dx + C\int_{T_{k-1}}^{\infty}\int_{\mathbb{R}^d}u_{k}|g|dxdt,
\end{equation*}
for any $\tau \in [T_{k-1}, T_k]$, where $T_k = 1 - 2^{-k}$ and $U_k$ is defined by
\begin{equation*}
	U_k=\sup_{t\geq T_k}\int_{\mathbb{R}^d}u_k(t)^2dx + \int_{T_k}^{\infty}\int_{\mathbb{R}^{d}}\big|(-\Delta)^{s/2} u_k\big|^2 dxdt.
\end{equation*}	
Taking the mean value in $\tau$ on $[T_{k-1},T_k]$ we find, 
\begin{equation*}
U_k\leq 2^{k}C\|u_k\|_{L^{2}([T_{k-1},\infty)\times\mathbb{R}^d)}^2 + C\|u_k\|_{L^{q'}([T_{k-1},\infty)\times\mathbb{R}^d)},
\end{equation*}
where $$q'=\frac{q}{q-1}<\frac{d+2s}{d+1-2s}.$$ As in \cite{CaVa} we can control the righe-hand side of this inequality by $U_{k-1}$. Indeed, by using Sobolev's inequality  we get
\begin{equation*}
	\|u_{k-1}\|^2_{L^{\frac{2(d+1)}{d+1-2s}}([T_{k-1},\infty)\times\mathbb{R}^d)} \leq CU_{k-1}.
\end{equation*}
On the other hand, it is noted that
\[
	\mathbf{1}_{\{u_k > 0\}} \leq \left(\frac{2^k}{\lambda}u_{k-1}\right)^{\frac{4s}{d+1-2s}} \quad \text{and} \quad  \mathbf{1}_{\{u_k > 0\}} \leq \left(\frac{2^k}{\lambda}u_{k-1}\right)^{-q' + \frac{2(d+1)}{d+1-2s}}.
\]
These imply that
\begin{align*}
	\|u_k\|_{L^{2}([T_{k-1},\infty)\times\mathbb{R}^d)}^2 \leq\left( \frac{2^{k}}{\lambda}\right)^{\frac{4s}{d+1-2s}} \|u_{k-1}\|_{L^{\frac{2(d+1)}{d+1-2s}}([T_{k-1},\infty)\times\mathbb{R}^d)}^{\frac{2(d+1)}{d+1-2s}},
\end{align*}
and
\begin{align*}
	\|u_k\|_{L^{q'}([T_{k-1},\infty)\times\mathbb{R}^d)}\leq \left(\frac{2^k}{\lambda}\right)^{\frac{2(d+1)}{(d+1-2s)q'}-1} \|u_{k-1}\|_{L^{\frac{2(d+1)}{d+1-2s}}([T_{k-1},\infty)\times\mathbb{R}^d)}^{\frac{2(d+1)}{(d+1-2s)q'}}.
\end{align*} 
We combine above estimates and obtain
\begin{equation*}
U_k\leq C  \frac{2^{k(d+1+2s)/(d+1-2s)}}{\lambda^{4s/(d+1-2s)}} U_{k-1}^{\frac{d+1}{d+1-2s}}+C \left(\frac{2^k}{\lambda}\right)^{\frac{2(d+1)}{(d+1-2s)q'}-1}U_{k-1}^{\frac{d+1}{(d+1-2s)q'}}, ~~\forall~k\geq 1. 
\end{equation*}
Thanks to \eqref{z8},  one has $U_0\leq C$. Since $$\frac{d+1}{d+1-2s} > 1 ~ \text{and} ~ \frac{d+1}{(d+1-2s)q'}>1,$$ so  for $\lambda>1$ large enough, $U_k$ converges to $0$. This means $u \leq \lambda$ for almost everywhere in $[1,\infty)\times\mathbb{R}^d$ and we find \eqref{z10}. The proof is complete. 
\end{proof}
\begin{remark}\label{ZA} From the  proof of the inequality \eqref{z10}, we find that if 
	\begin{equation}
	\|u(0)\|_{L^{2}(\mathbb{R}^{d})}+	\|g\|_{L^{\frac{2(d+1)}{d+1+2s}}(\mathbb{R}^{d+1}_+)} + \|g\|_{L^{q}(\mathbb{R}^{d+1}_+)}\leq 1, 
	\end{equation}
	then there holds
		\begin{equation}\label{z10'}
	\|u\|_{L^{\infty}([1,\infty)\times\mathbb{R}^{d})}\leq C.
	\end{equation}
\end{remark}

\medskip
\medskip 

\section{Oscillation lemma}

\qquad This section is concerned with reducing the oscillation of the solution, one of the most difficult task in proving the H\"older continuity of bounded solutions. In the critical case, Caffarelli and Vasseur used the harmonic extension $u^\star$ of solution $u$ in $\R^d \times \R^+$ . Namely, they expressed the fractional Laplacian $(-\Delta)^\frac{1}{2}u$ as the normal derivative of the harmonic extension  $u^\star$ on the boundary $\mathbb{R}^d$. Then thanks to the good properties of harmonic functions, they obtained the following diminishing oscillation result: if $ u^\star \leq 2$ in a box centered at the origin then $u^\star$ satisfies  
\[
	u^* < 2 - \lambda^\star
\]
in a smaller box, for some $\lambda^\star > 0$ which depends on the BMO-norm of the vector field $\mathbf{B}$. From this diminishing oscillation result, the H\"older continuity of $u^\star$ is proved by construction a suitable sequence of functions and by using the natural scaling invariance.

\vspace{0.2cm}
In the supercritical case, there is a change in the scaling invariance. Following the idea of Caffarelli and Vasseur with some modifications, Constantin and Wu  \cite{Cons-Wu} also derived a similar result.

\vspace{0.2cm} 
It seems that the approach of Caffarelli and Vasseur can not be used directly for the general kernel case because we have no information relating our operator in terms of some extension. In our case we then use an algebraic result to compare our integral operator with the standard fractional Laplacian. This allows us to continue using $s$-harmonic extension to obtain the following disminishing oscillation result:

\begin{proposition}\label{osclem1} Assume that the vector field $\mathbf{B}$ and the function $g$ satisfy following conditions 
	\begin{align*}
	\|\mathbf{B}\|_{L^\infty(-4,0;L^{d/s}(B_2))}\leq M_0, ~~ \displaystyle \|g\|_{L^{q}(Q_4)}^2 \leq \hat{\varepsilon}_0,
	\end{align*}
	where $\hat{\varepsilon}_0 < \varepsilon_0$ ($\varepsilon_0$ is the small constant defined in \ref{lem1}). Then there are positive constants $\lambda^\star$, $\varepsilon_2$ depending only on $s, d, M_0$  such that, for every solution $u$ of \eqref{eq1}, the following holds true:  
	
	\vspace{0.2cm}
	\qquad If $u$ satisfies 
	\begin{gather*}
	u^* \leq 2 ~~\text{in}~~Q^\star_1,  ~~~ \int_{B^c_1} \frac{(u(x, t)-2)_+}{|x|^{d+2s}}dx \leq  \varepsilon_2 \quad \forall \, t \in \R, \\
	\mathcal{L}_\omega^{d+2}\left(\left\{(x,z,t)\in Q_1^\star: u^*(x,z,t)\leq 0\right\}\right)\geq \frac{|B_1|}{4(1-s)},
	\end{gather*}
	then we have
	\begin{equation}\label{z7}
	u^* \leq 2-\lambda^\star~~\text{in}~~Q^\star_{1/16}.
	\end{equation}
\end{proposition}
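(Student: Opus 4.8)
The plan is to run the Caffarelli--Vasseur style diminishing oscillation argument on the $s$-harmonic extension $u^\star$, with the crucial algebraic Lemma~\ref{crucialLemma} used to convert the nonlocal energy inequality for $\mathcal{L}_t$ into one for $(-\Delta)^s$, which in turn localizes via the Caffarelli--Silvestre extension \eqref{a01}. First I would set up the iteration: define $u_m = (u^\star - (2-2^{-m}\lambda^\star))_+$ (or rather the trace version, truncating $u$ and then extending), and aim to show that for a suitable $\lambda^\star$ one cannot have the measure of the ``good set'' $\{u^\star \le 0\}$ stay large in $Q_1^\star$ while $u^\star$ stays close to $2$ on a smaller cylinder. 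The quantitative ingredient is an energy inequality: testing \eqref{eq1} against $h^2 (u - c)_+$ for a cutoff $h$ supported near $Q_1$, one gets a bound of the form $\partial_t \int h^2 (u-c)_+^2 + \int h^2 (u-c)_+ \mathcal{L}_t u \le (\text{transport term}) + (\text{source term})$. Here the transport term is handled exactly as in \cite{CaVa} using $\operatorname{div}\mathbf{B}=0$ together with the $\mathbb{X}^s$ (here $L^{d/s}$) bound on $\mathbf{B}$ — this is where the hypothesis $\|\mathbf{B}\|_{L^\infty(-4,0;L^{d/s}(B_2))}\le M_0$ enters — and the source term is controlled by $\|g\|_{L^q(Q_4)}^2 \le \hat\varepsilon_0$.

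Second, I would invoke Lemma~\ref{crucialLemma} with $h$ the cutoff and $f = u - c$: this replaces $\int h^2 f^+ \mathcal{L}_t f$ by $\int h^2 f^+ (-\Delta)^s f$ up to controlled Gagliardo-seminorm error terms $\iint f^+(x) f^+(y) \frac{(h(x)-h(y))^2}{|x-y|^{d+2s}}$, which are finite because $h$ is Lipschitz and $f^+$ is bounded (using $u^\star \le 2$ in $Q_1^\star$ and the tail control $\int_{B_1^c} \frac{(u-2)_+}{|x|^{d+2s}}dx \le \varepsilon_2$). Then $\int h^2 f^+ (-\Delta)^s f$ becomes, via the $s$-harmonic extension, a local Dirichlet-type integral $\int_{\mathbb{R}^{d+1}_+} z^{1-2s} |\nabla (h^\star (u-c)^\star_+)|^2$ (modulo cross terms and boundary terms which are, again, lower order given the cutoff structure and the tail hypothesis). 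At this point the problem is genuinely local and one runs the classical De Giorgi argument: an $L^2$-to-$L^\infty$ gain in the extension variable, isoperimetric/De Giorgi measure-shrinking to connect the ``$\{u^\star \le 0\}$ large'' hypothesis with the ``$u^\star$ near $2$'' conclusion, producing the gain $u^\star \le 2 - \lambda^\star$ on $Q^\star_{1/16}$. The constants $\lambda^\star, \varepsilon_2$ depend only on $d, s, M_0$ (and $\Lambda$ through the comparison constants in Lemma~\ref{crucialLemma}), as claimed; $\hat\varepsilon_0 < \varepsilon_0$ is the smallness threshold needed to absorb the source into the energy, where $\varepsilon_0$ is from the (presumably local energy) Lemma~\ref{lem1}.

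The main obstacle I anticipate is the careful bookkeeping of the \emph{error terms} generated by Lemma~\ref{crucialLemma} and by the extension: the Gagliardo seminorm terms $\iint f^+(x)f^+(y)\frac{(h(x)-h(y))^2}{|x-y|^{d+2s}}$ and the boundary/cross terms in the extension are not obviously small — they must be bounded using a combination of (a) the a priori bound $u^\star \le 2$ which makes $f^+ \le$ something controlled on the support of $h$, (b) the tail smallness $\int_{B_1^c}\frac{(u-2)_+}{|x|^{d+2s}}\le\varepsilon_2$ to handle the far-field contribution where $h$ is not compactly supported in the kernel integral, and (c) choosing the cutoff $h$ to transition on an annulus so that $(h(x)-h(y))^2/|x-y|^{d+2s}$ integrates to a fixed geometric constant. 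Getting these to feed correctly into the De Giorgi iteration — so that the nonlinear recursion $U_k \le C b^k U_{k-1}^{1+\beta}$ closes — is the technical heart; the local De Giorgi machinery itself, once the energy inequality is in local form, is by now standard and essentially follows \cite{CaVa} and \cite{CCV} with $s$ in place of $1/2$.
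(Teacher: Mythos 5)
There is a genuine gap. Your proposal re-describes the machinery that the paper has already packaged into its auxiliary results — the localized energy inequality obtained by testing with $h^2(u-c)_+$ and Lemma \ref{crucialLemma} (this is Proposition \ref{prop2}), the $L^2$-smallness-to-pointwise-drop step (Proposition \ref{lem1}), and the weighted De Giorgi isoperimetric ingredient (Propositions \ref{lem2} and \ref{prop.a1}) — but it never supplies the argument that actually proves Proposition \ref{osclem1}, namely how to exploit the hypothesis that $\{u^\star\le 0\}$ occupies only a fixed \emph{fraction} of $Q_1^\star$ (measure $\ge |B_1|/(4(1-s))$, far from full measure). The first De Giorgi lemma needs smallness of the local $L^2$ norm, and the isoperimetric lemma delivers that only when the intermediate set $\{0<u^\star<1\}$ has small measure; neither is given by the hypotheses. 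The missing idea is the finite dichotomy iteration: set $\bar u_0=u$, $\bar u_k=2(\bar u_{k-1}-1)=2^k(u-2)+2$ for $k\le K_0\sim 1+|B_1|\,(4(1-s)\delta_1)^{-1}$, and observe that either (Case 1) some $\bar u_{k_0}$ has $\mathcal{L}_\omega^{d+2}\{0<(\bar u_{k_0})^*<1\}\le\delta_1$, in which case Proposition \ref{lem2} applied to $\bar u_{k_0}$ and then Proposition \ref{lem1} applied to $\bar u_{k_0+1}$ give $u\le 2-2^{-K_0}\lambda$ on a smaller cylinder, or (Case 2) the intermediate set has measure $>\delta_1$ at every step, and since $\{(\bar u_k)^*<0\}\supset\{(\bar u_{k-1})^*\le 0\}\cup\{0<(\bar u_{k-1})^*<1\}$ the measure of the zero set grows by $\delta_1$ per step, forcing $(\bar u_{K_0})^*\le 0$ and hence $u^*\le 2-2^{-K_0}$. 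Your single phrase ``isoperimetric/De Giorgi measure-shrinking'' gestures at this but does not carry it out, and without it the conclusion \eqref{z7} cannot be reached; your proposed recursion $U_k\le Cb^kU_{k-1}^{1+\beta}$ is the mechanism of the \emph{first} lemma (already Proposition \ref{lem1}), not of this one.

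Two further points tied to that omission. First, the role of $\varepsilon_2$: under the dilations the far-field tails are amplified, since $(\bar u_k-2)_+=2^k(u-2)_+$, so $\varepsilon_2$ must be calibrated to the number of iterations (the paper takes $2^{K_0}\varepsilon_2=1$); your treatment of the tail term as a generic ``lower order'' error controlled by $\varepsilon_2$ misses why this constant must be small and how it is chosen. Second, the conclusion is a bound on the extension $u^\star$ in the bulk cylinder $Q^\star_{1/16}$, whereas the iteration yields a bound on $u$ (the trace); one still has to pass from the trace to $u^\star$ via the Poisson representation/maximum principle (as in the corresponding step of Caffarelli--Vasseur), a step your outline does not address.
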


\vspace{0.2cm}
The proof of this proposition is related to three propositions: a local energy inequality,  a result on the diminishing oscillation for $u^\star$  under  conditions the local $L^2$-norm of $u$ and  $u^\star$ are small. Third proposition shows the sufficient conditions to obtain the smallness of the local $L^2$-norm of $u$ and  $u^\star$.  

\medskip
\begin{proposition}\label{prop2}  Let $t_1, t_2$ be real numbers with $t_1 < t_2$. Assume that the vector field $\mathbf{B}$ with divergence free satisfies the condition 
\begin{align*}
	\|\mathbf{B}\|_{L^\infty(t_1,t_2;L^{d/s}(B_2))}^2\leq M_0,
\end{align*}
and let $\phi, \psi$ be cut-off functions  in $\mathbb{R}^d$ and $\mathbb{R}$ respectively such that 
\[
	\mathbf{1}_{B_1}\leq \phi \leq \mathbf{1}_{B_2} \quad \text{and} \quad \mathbf{1}_{(-1,1)}\leq \psi \leq \mathbf{1}_{(-2,2)}.
\]
If $u$ be a solution to \eqref{eq1} with
\[
	u\in L^\infty(t_1,t_2;L^2(\mathbb{R}^d)), \quad (-\Delta)^{s/2}u\in L^2((t_1,t_2)\times\mathbb{R}^d),
\]
then there exists a positive constant $C = C(d, \Lambda)$  such that 
\begin{align}\label{energe1}
	\frac{d}{dt}\int_{B_2}& (\phi u_+)^2dx  +	\int_{B_2^*} z^{1-2s}|\nabla(\eta u^\star_+)|^2dxdz   \notag \\ 
	& \leq C \iint_{\mathbb{R}^d\times\mathbb{R}^d} u_+(x) u_+(y)\frac{(\phi(x)-\phi(y))^2}{|x-y|^{d+2s}}dxdy \\ 
	& \quad + C\int_{B_2^*}z^{1-2s}(|\nabla \eta| u^\star_+)^2 dxdz   + CM_0 \int_{B_2}|\nabla \phi|^2u_+^2 dx + C \int_{B_2} \phi^2u_{+} |g| dx, \nonumber
\end{align}
	for every $t_1\leq t\leq t_2$. Here we denote $\eta =\phi \psi$.
\end{proposition}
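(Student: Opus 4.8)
The plan is to test the equation \eqref{eq1} with the truncated function $(\phi^2 \psi^2 u_+)$ (that is, with $\eta^2 u_+$ where $\eta = \phi\psi$), and then process the three resulting terms: the time-derivative term, the drift term, and the diffusion term $\int \eta^2 u_+ \mathcal{L}_t u\,dx$. First I would handle the time-derivative term, which after using $\operatorname{div}(\mathbf{B})=0$ to discard part of the transport contribution produces $\frac{d}{dt}\int_{B_2}(\phi u_+)^2\,dx$ up to a harmless term involving $\partial_t\psi$ supported on $[-2,2]\setminus(-1,1)$; since we differentiate in $t$ and integrate against cutoffs, this is routine. The key conceptual step is the treatment of the diffusion term: here I invoke Lemma~\ref{crucialLemma} with $h = \eta$ and $f = u$, which lets me replace $\int \eta^2 u_+ \mathcal{L}_t u\,dx$ (up to equivalence and up to a Gagliardo-seminorm error term $\iint u_+(x)u_+(y)\frac{(\eta(x)-\eta(y))^2}{|x-y|^{d+2s}}\,dxdy$) by the fractional-Laplacian energy $\int \eta^2 u_+ (-\Delta)^s u\,dx$. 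This is precisely the point of the algebraic identity: it converts the intractable nonlocal operator with measurable kernel into a pure power of the Laplacian.

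Next I would localize the fractional-Laplacian term using the Caffarelli–Silvestre extension \eqref{a01}. Writing $(-\Delta)^s u = \lim_{z\to 0}(-z^{1-2s}\partial_z u^\star)$ in the weak sense and integrating by parts in the half-space $\mathbb{R}^{d+1}_+$, one obtains, after the standard manipulation with the trace term, the bound
\begin{align*}
\int_{\mathbb{R}^d}\eta^2 u_+ (-\Delta)^s u\,dx \;\geq\; c\int_{B_2^\star} z^{1-2s}|\nabla(\eta u_+^\star)|^2\,dxdz \;-\; C\int_{B_2^\star}z^{1-2s}(|\nabla\eta|\,u_+^\star)^2\,dxdz,
\end{align*}
using that $z^{1-2s}\nabla u^\star$ is divergence-free and that $u_+^\star \leq (u^\star)_+$ has the right trace; the cross term is absorbed by Cauchy–Schwarz into the gradient square plus the $|\nabla\eta|^2$ weighted term. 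One must be slightly careful that $(u_+)^\star$ and $(u^\star)_+$ are not literally equal, but the extension is order-preserving enough (the Poisson kernel is positive) that the needed inequality goes through; this is a known subtlety already in \cite{CaVa}.

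Finally, the drift term $\int_{B_2}\eta^2 u_+\,\mathbf{B}\cdot\nabla u\,dx = \int_{B_2}\eta^2\,\mathbf{B}\cdot\nabla\big(\tfrac{u_+^2}{2}\big)\,dx$ is integrated by parts; using $\operatorname{div}(\mathbf{B})=0$ it becomes $-\int_{B_2} u_+^2\,\mathbf{B}\cdot\nabla(\eta^2)\,dx$, which by Hölder with the $L^{d/s}(B_2)$ bound on $\mathbf{B}$, Sobolev embedding of $H^s$ into $L^{2d/(d-2s)}$ applied to $\eta u_+$, and Young's inequality, is controlled by $\varepsilon\int_{B_2^\star}z^{1-2s}|\nabla(\eta u_+^\star)|^2 + CM_0\int_{B_2}|\nabla\phi|^2 u_+^2$ (the trace inequality converting the extension energy back to $\|(-\Delta)^{s/2}(\eta u_+)\|_{L^2}^2 \gtrsim \|\eta u_+\|_{H^s}^2$). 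The source term $\int_{B_2}\phi^2 u_+|g|\,dx$ is kept as is. Collecting everything and absorbing the small multiples of the extension-energy term into the left-hand side yields \eqref{energe1}. The main obstacle I anticipate is bookkeeping the various error terms so that the Gagliardo seminorm $\iint u_+(x)u_+(y)\frac{(\eta(x)-\eta(y))^2}{|x-y|^{d+2s}}$ coming out of Lemma~\ref{crucialLemma} collapses to the stated $\iint u_+(x)u_+(y)\frac{(\phi(x)-\phi(y))^2}{|x-y|^{d+2s}}$ form plus the weighted $z^{1-2s}(|\nabla\eta|u_+^\star)^2$ term — this requires splitting $\eta(x)-\eta(y)$ via $\psi$ and $\phi$ and estimating the $\psi$-part of the seminorm by the extension energy, using $\mathbf{1}_{(-1,1)}\leq\psi\leq\mathbf{1}_{(-2,2)}$ to keep the kernel integrable.
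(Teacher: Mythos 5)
Your overall strategy (test the equation with a truncation, use Lemma \ref{crucialLemma} to replace $\mathcal{L}_t$ by $(-\Delta)^s$, localize via the Caffarelli--Silvestre extension, and treat the drift by H\"older with the $L^{d/s}$ bound plus trace and Sobolev inequalities) is the same as the paper's. However, there is a genuine gap coming from a misreading of the role of $\psi$: in Proposition \ref{prop2}, $\psi$ is a cut-off in the extension variable $z$ (the terms $\int_{B_2^*}z^{1-2s}|\nabla(\eta u^\star_+)|^2\,dxdz$ force $\eta=\phi(x)\psi(z)$, and the paper later calls $\psi$ ``a fixed cut-off function in $z$ only''), not a cut-off in time. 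Accordingly, the paper tests the equation with $\phi^2u_+$ (no $\psi$ on the PDE side), applies Lemma \ref{crucialLemma} with $h=\phi$ --- which directly produces the stated Gagliardo error with $(\phi(x)-\phi(y))^2$ --- and brings in $\psi$ only through the identity $0=\int_{B_2^\star}\eta^2(u^\star)_+\,\mathrm{div}\big(z^{1-2s}\nabla u^\star\big)\,dxdz$: since $\eta$ vanishes on the lateral and top boundary of $B_2^\star$ and $\psi(0)=1$, integration by parts leaves only the $z=0$ boundary term $\int\phi^2u_+(-\Delta)^su\,dx$, and the exact identity $\nabla(\eta^2(u^\star)_+)\cdot\nabla u^\star=|\nabla(\eta(u^\star)_+)|^2-|\nabla\eta|^2(u^\star)_+^2$ yields precisely the two extension terms in \eqref{energe1}.

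In your version, with $\psi=\psi(t)$ and the test function $\eta^2u_+$, two steps break. First, your key lower bound $\int\eta^2u_+(-\Delta)^su\,dx\gtrsim\int_{B_2^\star}z^{1-2s}|\nabla(\eta u_+^\star)|^2-C\int_{B_2^\star}z^{1-2s}(|\nabla\eta|u_+^\star)^2$ cannot be derived with an $\eta$ that does not depend on $z$: the integration by parts then runs over $B_2\times(0,\infty)$, and the resulting error term $\int_{B_2\times(0,\infty)}z^{1-2s}|\nabla\phi|^2(u^\star_+)^2$ is not bounded by the right-hand side of \eqref{energe1}, which only contains the portion $z<2$; the $z$-cut-off $\psi$ is exactly what confines both extension terms to $B_2^\star$. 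Second, \eqref{energe1} is a pointwise-in-$t$ differential inequality, so a time cut-off (and your ``harmless $\partial_t\psi$'' term) has no place in it; similarly, your closing plan to ``estimate the $\psi$-part of the $\eta$-seminorm by the extension energy'' is vacuous under your reading (there $\eta(x)-\eta(y)=\psi(t)(\phi(x)-\phi(y))$) and ill-posed under the correct one, since $h$ in Lemma \ref{crucialLemma} must be a function on $\mathbb{R}^d$. The repair is simple and recovers the paper's argument: test with $\phi^2u_+$, take $h=\phi$ in Lemma \ref{crucialLemma}, and use $\psi(z)$ only inside the extension identity; your treatment of the drift and source terms then goes through as written.
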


\begin{proof} 
	Using $\phi^2 u_{+}$ as test function of equation \eqref{eq1}, ones has 
	\begin{align}\label{Z1}
		\frac{d}{dt}\int_{B_2}\phi^2 \frac{u_+^2}{2}dx  + \int_{B_2} \phi^2u_{+}\mathcal{L}_t udx = \int_{B_2} \mathbf{B}.\nabla\phi^2\frac{u_+^2}{2}dx  + \int_{B_2} \phi^2u_{+} gdx. 
	\end{align}
	By  using Holder's inequality it follows that
	\begin{align*}
		\left|\int_{B_2} \mathbf{B}.\nabla \phi^2\frac{u_+^2}{2}dx\right| & \leq \varepsilon \|\phi u_+\|_{L^{\frac{2d}{d-2s}}(B_2)}^2 + \frac{1}{\varepsilon}\|(\mathbf{B}.\nabla \phi)u_+\|_{L^{\frac{2d}{d+2s}}(B_2)}^2 \\
		&\leq \varepsilon ||\phi u_+||_{L^{\frac{2d}{d-2s}}(B_2)}^2 + \frac{M_0}{\varepsilon}\int_{B_2}|\nabla \phi|^2u_+^2 dx. \nonumber
	\end{align*}
	The first term in the RHS can be estimated by using the trace theorem and the Sobolev embedding as follows
	\begin{align*}
		\|\phi u_+\|_{L^{\frac{2d}{d-2s}}(B_2)}^2 & \leq C\|\mathbf{1}_{B_2} \phi u_+\|_{H^{s}(\mathbb{R}^d)}^2 \\
		& = C\int_0^\infty\int_{\mathbb{R}^d}z^{1-2s}|\nabla (\mathbf{1}_{B_2} \phi u_+)^\star|^2dxdz	\\
		& \leq C\int_0^\infty\int_{\mathbb{R}^d}z^{1-2s}|\nabla[\mathbf{1}_{B_2^*} \eta u^\star_+ |^2dxdz \\
		& = C\int_{B_2^*}z^{1-2s}\left|\nabla[\eta u^\star_+]\right|^2dxdz.	
	\end{align*}
	Hence it implies
	\begin{align}\label{Z2}
		\left|\int_{B_2} \mathbf{B}.\nabla \phi^2\frac{u_+^2}{2}dx\right| \leq \varepsilon\int_{B_2^*}z^{1-2s}\left|\nabla[\eta u^\star_+]\right|^2dxdz+ C_\varepsilon M_0\int_{B_2}|\nabla \phi|^2(u_+)^2 dx.
	\end{align}
		
	\noindent 
	Next, thanks to  Lemma \ref{crucialLemma}, it follows that
	\begin{align*}
		\int_{B_2} \phi^2 u_{+}\mathcal{L}_t udx \geq	C_0\int_{B_2} \phi^2u_{+}(-\Delta)^{s/2}u dx - C_1 \iint_{\mathbb{R}^d\times\mathbb{R}^d}  u_+(x)u_+(y)\frac{(\phi(x)-\phi(y))^2}{|x-y|^{d+2s}}dxdy. 	\end{align*}
	On the other hand, we have
	\begin{align*}
		0 & = \int_{B_2^*}\eta^2 u^\star_+\text{div}(z^{1-2s}\nabla u^\star)dxdz \\ 
		   & = -\int_{B_2^*} z^{1-2s}\nabla(\eta^2 u^\star_+)\nabla u^\star dxdz + \int_{B_2}\phi^2u_+(-\Delta)^{s/2}u dx \\
		   & = -\int_{B_2^*} z^{1-2s}|\nabla(\eta u^\star_+)|^2dxdz + \int_{B_2^*}z^{1-2s}|\nabla\eta|^2 (u^\star_+)^2dxdz + \int_{B_2}\phi^2u_+(-\Delta)^{s/2}u dx.
	\end{align*}
	Hence we imply
	\begin{align}\label{Z3}
		\int_{B_2} \eta^2[u]_{+}\mathcal{L}_t u \geq & C_0\int_{B_2^*} z^{1-2s}|\nabla(\eta u^\star_+)|^2dxdz - C_0 \int_{B_2^*}z^{1-2s}|\nabla\eta|^2 u^\star_+dxdz \\
		& - C_1 \iint_{\mathbb{R}^d\times\mathbb{R}^d}  u_+(x) u_+(y)\frac{(\phi(x)-\phi(y))^2}{|x-y|^{d+2s}}dxdy. \nonumber
	\end{align}
	By choosing $\varepsilon$ small enough and by combining \eqref{Z1}, \eqref{Z2}, \eqref{Z3} yield  \eqref{energe1}.
\end{proof}

\begin{remark}
	\begin{itemize}
		\item[$\diamond$] This proposition allows us to  control the $L^2_tL^2_{x,z}$-norm of the gradient of $u^\star$. Howerver, it gives a control in $L_t^\infty L^2_{x,z}$-norm of $u^\star$ only on the trace $z = 0$. In oder to obtain the regularity results, it remains to control this norm of $u^\star$ in $z > 0$.
		\item[$\diamond$] In order to control the term 
		\[
		 \iint_{\mathbb{R}^d\times\mathbb{R}^d}  u_+(x) u_+(y)\frac{(\phi(x)-\phi(y))^2}{|x-y|^{d+2s}}dxdy
		\]
		we need to add the condition $\displaystyle \int_{B_4^c} \frac{(u(x, t)-2)_+}{|x|^{d+2s}} dx \leq 2, \, \forall t \in \R$ in the sequel estimates.
	\end{itemize}
\end{remark}

\begin{proposition}\label{lem1} Let $q>(d+1)/2s$. Assume that the vector field $\mathbf{B}$ with divergence free satisfies 
	\begin{align}
		\|\mathbf{B}\|_{L^\infty(-4,0;L^{d/s}(B_2))}\leq M_0.
	\end{align}
	There exist constants $\varepsilon_0 = \varepsilon_0(s, d, \Lambda, M_0) > 0$ and  $\lambda = \lambda(s, d) \in (0,\frac{1}{2})$ such that for any solution $u$ to equation \eqref{eq1}, the following property holds true:\\
	If 
	\begin{align}\label{z01}
		\int_{B_4^c} \frac{(u(x, t)-2)_+}{|x|^{d+2s}} dx \leq 2, \, \forall t \in \R,
	\end{align}
	\begin{align}\label{z02}
		u^\star \leq 2~~\text{in}~~ Q_4^\star,
	\end{align}
	and 
	\begin{align}\label{z4}
		\int_{Q_4^{\star}}z^{1-2s} (u^\star_+)^2 dxdzds + \int_{Q_4} u_+^2 dxds   +||g||_{L^{q}(Q_4)}^2\leq \varepsilon_0,
	\end{align}
	then
	\begin{align}
		u_+ \leq 2-\lambda ~~ \text{on} ~~ Q_1.
	\end{align}
\end{proposition}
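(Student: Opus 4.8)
The statement is the first, $L^2\!\to\!L^\infty$, step of De Giorgi's scheme for \eqref{eq1}, together with a quantitative drop of the supremum. I would prove it by a De Giorgi iteration on truncations of $u$, driven by the local energy inequality of Proposition \ref{prop2}. Fix $\lambda\in(0,1/2)$ (to be chosen at the end, depending only on $s,d$), set $C_k=(2-\lambda)(1-2^{-k-1})$ so that $C_k\uparrow 2-\lambda$, and let $r_k=1+2^{-k}$, $T_k=-1-2^{-k}$, $u_k=(u-C_k)_+$. Since $\mathcal L_t$ annihilates constants, $u-C_k$ is again a solution of \eqref{eq1} with the same $g$, so Proposition \ref{prop2} applies to it at each level. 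With cut-offs $\phi_k(x)$, $\psi_k(z)$ adapted to the shrinking cylinders $Q_{r_k}\subset Q_{r_{k-1}}$ (equal to $1$ on $B_{r_k}$, supported in $B_{r_{k-1}}$, gradients $\lesssim 2^k$), $\eta_k=\phi_k\psi_k$, I introduce
\[
U_k=\sup_{t\ge T_k}\int_{\mathbb R^d}(\phi_k u_k(t))^2\,dx+\int_{T_k}^{0}\!\int_{\mathbb R^{d+1}_+}z^{1-2s}\big|\nabla(\eta_k u_k^\star)\big|^2,\qquad a_k=\int_{Q_{r_k}}u_k^2 .
\]

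\textbf{Energy recursion and the nonlocal tail.} Applying Proposition \ref{prop2} to $u-C_k$ on $(\tau,t)$, integrating in $t$ from $\tau\in[T_{k-1},T_k]$ and averaging in $\tau$ (as in the proof of Proposition \ref{GLinf}) bounds $U_k$ by $C2^{2k}$ times the $L^2$-energy of $u_k$ on $Q_{r_{k-1}}$, the $z^{1-2s}$-weighted $L^2$ of $u_k^\star$ on $Q^\star_{r_{k-1}}$, and the Gagliardo double integral $\iint u_k(x)u_k(y)|x-y|^{-d-2s}(\phi_k(x)-\phi_k(y))^2$, plus $C\int_{Q_{r_{k-1}}}u_k|g|$. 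The Gagliardo term splits into a near part ($x,y\in B_{r_{k-1}}$), handled with $|\phi_k(x)-\phi_k(y)|\lesssim\min(2^k|x-y|,1)$ and local integrability of $|x-y|^{2-d-2s}$ (recall $2s\le1$), and a far part in which $u_k(y,t)\le 2$ for $y\in B_4$ by \eqref{z02} and $u_k(y,t)\le(u(y,t)-2)_++2$ for $y\notin B_4$, the latter integrated against $|y|^{-d-2s}$ being $\le 2+C$ by \eqref{z01}; crucially the far part appears multiplied by $\|u_k\|_{L^1(Q_{r_{k-1}})}$.

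\textbf{The $z>0$ term and the nonlinearity.} For the $z^{1-2s}$-weighted $L^2$ of $u_k^\star$ — which Proposition \ref{prop2} does \emph{not} control in $L^\infty_t$ (see the remark after it) — I would use that $u_k^\star=(u^\star-C_k)_+$ is a nonnegative subsolution of $\dive(z^{1-2s}\nabla\cdot)=0$ with trace $u_k$; concretely $u_k^\star\le (u_k(\cdot,t))^\star$ (as $P\ge0$, $\int P=1$), and localizing the Poisson integral gives $\int_{B^\star_{r_k}}z^{1-2s}(u_k^\star(t))^2\lesssim \int_{B_{2r_k}}u_k(t)^2+\big(\int_{B_{2r_k}^c}\tfrac{u_k(y,t)}{|y|^{d+2s}}dy\big)^2$, which after integration in $t$ is tied back to $a_{k-1}$, to $\int_{Q_4}u_+^2\le\varepsilon_0$ and to the tail \eqref{z01}. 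Next, the trace/Sobolev inequality from Proposition \ref{prop2}, the parabolic embedding $L^\infty_tL^2_x\cap L^2_tH^s_x\hookrightarrow L^{\bar p}_{t,x}$ with $\bar p=\tfrac{2(d+1)}{d+1-2s}>2$, and the weighted Sobolev inequality for the $z^{1-2s}$-Laplacian give $\|u_k\|^2_{L^{\bar p}(Q_{r_k})}+\|z^{\frac{1-2s}{2\gamma}}\eta_k u_k^\star\|^2_{L^2_tL^{2\gamma}_{x,z}}\lesssim U_k$ for a suitable $\gamma>1$. Since $C_k-C_{k-1}=(2-\lambda)2^{-k}$, Chebyshev yields $|\{u_k>0\}\cap Q_{r_{k-1}}|\lesssim 2^{2k}a_{k-1}$ (and, using $\int_{Q_4}u_+^2\le\varepsilon_0$, $|\{u_k>0\}\cap Q_4|\lesssim\varepsilon_0$), and likewise for the $z^{1-2s}$-measure of $\{u_k^\star>0\}$; together with $\|g\|_{L^q(Q_4)}\le\varepsilon_0^{1/2}$ and $q>\tfrac{d+1}{2s}$ (hence $q'<\bar p$), every $L^1$ or $L^2$ norm of $u_k,u_k^\star$ appearing above is estimated by a Lebesgue norm coming from $U_k$ times a power of these small measures, turning the previous inequality into a genuinely superlinear recursion $a_k\le C^k a_{k-1}^{1+\beta}$ for some $\beta=\beta(d,s,q)>0$.

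\textbf{Conclusion and the main obstacle.} Hypothesis \eqref{z4} gives $a_0\le\varepsilon_0$, so choosing $\varepsilon_0$ small (depending only on $s,d,\Lambda,M_0,q$) forces $a_k\to0$; since $r_k\downarrow1$, $T_k\downarrow-1$ and $u_k\downarrow(u-(2-\lambda))_+$, this yields $(u-(2-\lambda))_+=0$ a.e. on $Q_1$, i.e. $u_+\le 2-\lambda$ on $Q_1$ (and $\lambda$, entering the constants only through $(2-\lambda)^{\pm1}\sim1$, may be any fixed value in $(0,1/2)$). The one point that is not routine bookkeeping is the control of the interior ($z>0$) contribution $\int z^{1-2s}(u_k^\star)^2$ and of the genuinely nonlocal tail: one must exploit the subsolution structure of $(u^\star-C_k)_+$ (equivalently the Poisson representation) to recover the $L^\infty_t$-type control that Proposition \ref{prop2} leaves open, and one must arrange that this term and the nonlocal tail feed the recursion \emph{only} through superlinear powers of $a_{k-1}$ rather than as an additive constant — this is where the conditions \eqref{z01}, \eqref{z02}, \eqref{z4}, the pointwise bound $u\le2$ on $Q_4$ and the level gap $C_k-C_{k-1}$ have to be combined with care.
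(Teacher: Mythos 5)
Your setup (levels $C_k$, truncations $u_k$, the energy inequality of Proposition \ref{prop2}, Chebyshev with the level gap to gain superlinearity, and the splitting of the Gagliardo term using \eqref{z01}--\eqref{z02} with the far part multiplied by $\|u_k\|_{L^1}$) matches the paper's scheme. But there is a genuine gap exactly at the point you yourself flag as "not routine bookkeeping", and the device you propose there does not close the recursion. You control the weighted term in $z>0$ by $u_k^\star=(u^\star-C_k)_+\le (u_k)^\star=P(\cdot,z)*u_k$ and then Young's inequality, which yields a bound of the form $\int_{B^\star_{r}}z^{1-2s}((u_k)^\star)^2\lesssim \int_{B_{2r}}u_k^2+\bigl(\int_{B_{2r}^c}u_k(y)|y|^{-d-2s}dy\bigr)^2$. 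The second term is \emph{not} small and does not decay in $k$: by \eqref{z01} and $2-C_k\gtrsim\lambda$ it is of size $O(1)$ uniformly in $k$ and in time. It enters the energy inequality through $\int z^{1-2s}|\nabla\eta_k|^2(u_k^\star)^2$, where the $z$-derivative of your cut-off lives at heights $z\sim 1$ and the Poisson kernel gives the far field a contribution $\sim z^{2s}\cdot O(1)=O(1)$ there; since the Poisson extension of a nonnegative trace that is positive far away is positive everywhere, no indicator/measure factor rescues you. This produces an additive $O(1)$ term in the recursion $a_k\le C^k a_{k-1}^{1+\beta}+O(1)$, which destroys the iteration; none of \eqref{z01}, \eqref{z02}, \eqref{z4} removes it within your argument.

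The paper's proof supplies precisely the missing mechanism, and it is not a refinement of your Poisson bound but a different device: barrier functions $\xi_1,\xi_2$ for $\mathrm{div}(z^{1-2s}\nabla\cdot)=0$ together with the maximum principle. First, $\xi_1$ (boundary value $2$ on $\partial B_4^\star\setminus\{z=0\}$) absorbs the entire far field, giving $u^\star\le(\mathbf{1}_{B_4}u_+)*P+\xi_1$, and the choice $\xi_1\le 2-4\lambda$ on $B_2^\star$ is what \emph{defines} $\lambda$ (it is not a free parameter). Second, one proves by induction, using $\xi_2$ and a barrier in the box $B_{1+2^{-m-1/2}}\times(0,\delta^m)$, the vanishing property \eqref{z3}: $\eta_k u^*_k=0$ for $\delta^k\le z\le\min\{2,\delta^{k-1}\}$, together with the comparison \eqref{a1}, $u^*_{m+1}\le\bigl((\eta_m u_m)\star P(\cdot,z)-\lambda 2^{-m-2}\bigr)_+$. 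The vanishing property kills the cut-off term at heights $z\sim1$ (so the $O(1)$ tail never appears), restricts all $z$-integrals to $z\le\delta^m$, and makes the truncation $\eta u^*\mathbf{1}_{\{0<z<\delta^m\}}$ an admissible competitor for the trace inequality; the comparison \eqref{a1} replaces the global trace by the \emph{localized} one $\eta_m u_m$ and, through the subtracted constant $\lambda2^{-m-2}$, converts all remaining contributions into superlinear powers of $A_{m-3}$ via Chebyshev. Without this (or an equivalent) localization-plus-vanishing argument, the recursion you outline cannot be closed, so the proposal as written is incomplete at its decisive step.
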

\begin{proof} We follow the proof of \cite[Lemma 6]{CaVa}.  For  $k = 0, 1, 2, ...$, we set $$C_k=2-\lambda(1+2^{-k}),$$ 
	 \[
		u_k=(u-C_k)_+, ~~~ u^*_k= (u^\star-C_k)_+.
	\]
By using the  energy inequality \eqref{energe1} with the test function $\phi_k\psi u_k$, where 
	\begin{itemize}
		\item[$\diamond$] $\phi_k$ is a cut off function in $x$ such that
		\[
			\mathbf{1}_{B_{1+2^{-k-1/2}}}\leq \phi_k \leq \mathbf{1}_{B_{1+2^{-k}}} ~~\text{and} ~~  |\nabla\phi_k|\leq C 2^{k},
		\]
		\item[$\diamond$] $\psi$ is a fixed cut-off function in $z$ only with $\mathbf{1}_{(-1,1)}\leq \psi \leq \mathbf{1}_{(-2,2)}$,
	\end{itemize}
it follows that for any $t_1\in [-4,-2]$ we have
\begin{align*}
A_k & \leq \sup_{t_2\in [-1-2^{-k},0]}	\left(\int_{B_2}\phi_k^2 u_k^2(t_2)dx+	\int_{t_1}^{t_2}\int_{B^{\star}_2}z^{1-2s} |\nabla \left(\eta_k u^*_k\right)|^2dxdzdt\right)\\ 
	& \leq C\int_{-4}^{0} \iint_{\mathbb{R}^d\times\mathbb{R}^d}   u_k(x) u_k(y)\frac{(\phi_k(x)-\phi_k(y))^2}{|x-y|^{d+2s}}dxdy dt\\
	& ~~~~ + \int_{B_2}\phi_k^2 u_k^2(t_1)dx + C\int_{-4}^{0}\int_{B^{\star}_2} z^{1-2s}|\nabla\eta_k|^2(u^*_k)^2dxdzdt\\
	& ~~~~ + CM_0\int_{-4}^{0}\int_{B_2}|\nabla \phi_k|^2u_k^2 dxds + \int_{-4}^{0}\int_{B_2} \phi^2_ku_k |g| dxdt.
\end{align*}
Here $A_k$ is defined by
\begin{align*}
	A_k=\int_{-1-2^{-k}}^{0}\int_{0}^{\delta^k}\int_{\mathbb{R}^d}z^{1-2s}|\nabla(\eta_k u^*_k)|^2dxdzdt+\sup_{t\in [-1-2^{-k},0]}\int_{\mathbb{R}^d}(\phi_k u_k)^2dx,
\end{align*}
for some constant $\delta > 0$ that will be chosen later. 

\vspace{0.2cm}
\quad Thanks to the conditions \eqref{z4} and the facts that  $|\nabla \phi_k|\leq C2^{k}$ and
\[
	\int_{-4}^{0}\int_{B_2} \phi^2_ku_k |g| dxdt \leq \frac{1}{2}\big(\int_{Q_4}u_k^2dxdt + \|g\|^2_{L^2(Q_4)}\big),
\]
we obtain
\begin{align*}
	A_k\leq  C(1+M_0)2^{2k}\varepsilon_0 + C\int_{-4}^{0}\int_{\mathbb{R}^d}\int_{\mathbb{R}^d} u_k(x) u_k(y)\frac{(\phi_k(x)-\phi_k(y))^2}{|x-y|^{d+2s}}dxdy dt.
\end{align*}
On the other hand we note that
\begin{align*}
	\int_{-4}^0\int_{B_4}\int_{B_4} u_k(x) u_k(y)  \frac{(\phi_k(x)-\phi_k(y))^2}{|x-y|^{d+2s}}dxdydt & \leq C\int_{-4}^0\int_{B_4}\int_{B_4}\frac{u_k(x) u_k(y)}{|x - y|^{d-2+2s}}dxdydt \\
 	& \leq C \|u_k\|_{L^2(Q_4)}^2 \leq C\varepsilon_0,
\end{align*}
and
\begin{align*}
 	\int_{-4}^0\int_{B_4^c}\int_{B_4}  u_k(x) u_k(y)\frac{(\phi_k(x)-\phi_k(y))^2}{|x-y|^{d+2s}}dxdydt 
&	\leq \int_{-4}^0\int_{B_4^c}\int_{B_2} \frac{u_k(x)u_k(y)}{|x - y|^{d+2s}}dxdydt \\
	& \leq C\int_{-4}^0\int_{B_4^c}\frac{u_k(y)}{|y|^{d+2s}}dy\int_{B_2}u_k(x)dxdt 
	\\&\leq C \|u_k\|_{L^2(Q_4)} \leq C \sqrt{\varepsilon_0}.
\end{align*}
Here we used \eqref{z01} and \eqref{z4} in two last inequalities.
Hence it follows that
\begin{equation}\label{z03}
	A_k \leq C\Big[(1 + M_0)2^{2k}\varepsilon_0 + \sqrt{\varepsilon_0}\Big].
\end{equation}

\vspace{0.2cm}
 In the following we shall prove that there exist $0<\delta<1$ and $M>1$ such that  the following estimates hold true:
\begin{equation}
	A_k	\leq M^{-k}\label{z2},
\end{equation}
\begin{equation}
	\eta_ku^*_k = 0~~\text{for}~~\delta^k\leq z\leq \min\{2,\delta^{k-1}\},\label{z3}
\end{equation}
for every $k \geq 0$,

\vspace{0.2cm}\noindent
{\bf Step 1.}  (Existence of $\delta$ and $M$)

\vspace{0.2cm}
\quad	Let  $\xi_1$ and $\xi_2$ be the solutions of respectively following problems
	\[
		\begin{cases}
			\mathrm{div}(z^{1-2s}\xi_1) = 0 & \text{in}~~ B_4^*, \\
			\xi_1 = 2 & \text{in} ~~ \partial B_4^* \setminus \{z = 0 \}, \\
			\xi_1 = 0 & \text{in} ~~ \partial B_4^* \cap \{z = 0\},
		\end{cases}
	\]	
	and
	\[
		\begin{cases}
			\mathrm{div}(z^{1-2s}\xi_2) = 0 & \text{in}~~ [0, \infty) \times [0, 1], \\
			\xi_2(0, z) = 2 & 0 \leq z \leq 1, \\
			\xi_2(x, 0) = \xi_2(x, 1) = 0 & 0 < x< \infty.
		\end{cases}
	\] 
	From the maximum principle we can choose a constant $\lambda \in (0,1/2)$ such that 
	\begin{align}\label{z5}
		\xi_1(x,z)\leq 2-4\lambda ~~\text{in}~~B_2^{\star}.
	\end{align}	 
	Moreover, there are also constants $c_1 > 0$ and $\sigma_0$ satisying
	\begin{align}\label{z1}
		|\xi_2(x,z)| \leq c_1e^{-\sigma_0x }~~\text{in}~~[0,\infty)\times[0,1].
	\end{align}

\quad Let $c_2$ and $\theta_1, \theta_2 > 1$ be constants that will be chosen in later.   It is not difficult to check that (see more details in \cite[Lemma 7]{CaVa}) there are constants $\delta \in (0, 1)$ and $M > 1$ satisfying
\begin{align}\label{za1}
	&2dc_1\exp\Big({-\frac{2^{-k}}{4(\sqrt{2}+1)\delta^k}}\Big)\leq \lambda 2^{-k-2},
\end{align}
\begin{equation}\label{za2}
	 M^{-k/2}\delta^{-\frac{d(k+1)}{2}} ||P(\cdot, 1)||_{L^2(\mathbb{R}^d)}\leq \lambda 2^{-k-2},
\end{equation}
and
\begin{equation}\label{za3}
	M^{-k}\geq c_2^k\big(M^{-\theta_1(k-3)} + M^{-\theta_2(k-3)}\big),
\end{equation}
for all $k\geq 12d$. 

\vspace{0.2cm}\noindent 
{\bf Step 2.} (Initial step)

\vspace{0.2cm}

\qquad  From \eqref{z03} we can choose $\varepsilon_0$ sufficiently small so that \eqref{z2} is verified for all $0\leq k\leq 12d$. Next, we shall prove the equality \eqref{z3} with $k=0$. Similarly as in proof of \cite[Lemma 6]{CaVa} we can use the maximum principle to obtain
\[
	u^* \leq (\mathbf{1}_{B_4}u_+) * P(\cdot, z) + \xi_1 ~~ \text{in} ~~ B_4^* \times \mathbb{R}^+.
\]
On the other hand, it follows from Young's inequality and \eqref{z4} that, for $t \in [-4, 0]$ and $z \geq 1$ that
\begin{align*}
	\left\|(\mathbf{1}_{B_4}u_+) * P(\cdot, z)\right\|_{L^{\infty}(B_4)} & \leq C\left\|P(\cdot, z)\right\|_{L^{2}(\mathbb{R}^d)} \left\|(\mathbf{1}_{B_4}u_+)\right\|_{L^{2}(\mathbb{R}^d)} \\
	& \leq C\|P(\cdot, 1)\|_{L^{2}(\mathbb{R}^d)}\|\mathbf{1}_{B_4}u_+\|_{L^2(B_4)} \leq C\sqrt{\varepsilon_0}.
\end{align*}
So thanks to \eqref{z5} we can choose $\varepsilon_0$ small enough so that 
\[
	u^* \leq 2 - 2\lambda ~~\text{in} ~~ B_4 \times [1, \infty) \times [-4, 0],
\]
which implies $\eta_0u^*_0$ vanishes for $1 \leq z \leq 2$.

\vspace{0.2cm}\noindent 
{\bf Step 3.} (Propagation of Properties \eqref{z2} and \eqref{z3})

\qquad Let us assume that \eqref{z2} and \eqref{z3} hold true for $k = m$. We shall prove that \eqref{z3} is also true at $k = m+1$. To do this we consider the function $\zeta_k$ as follow
\[
	\zeta_k(x,z) = \sum_{i=1}^d\hat{\xi}_i(x, z) + \big((\eta_mu_m) \star P(\cdot, z)\big)(x).
\]
Here, for $x = (x_1, x_2, ..., x_d)$, the function $\hat{\xi}_i$ is defined by
\[
	\hat{\xi}_i(x, z) = \xi_2\big(\frac{-x_i + x^+}{\delta^m}, \frac{z}{\delta^m} \big) + \xi_2\big(\frac{x_i - x^-}{\delta^m}, \frac{z}{\delta^m} \big),
\]
with $x^+ = 1 + 2^{-m - 1/2}$, $x^- = -x^+$. Then we can see that $\zeta_m$ satisfies the equation
\begin{align*}
	 	\mathrm{div}(z^{1-2s}\nabla\zeta_m) = 0  ~~ \text{in} ~~ Q^\delta_m :=  B_{1+2^{-m-1/2}} \times (0, \delta^m).
\end{align*}
It is clear that $\zeta_m$ vanishes on the side $z = \delta^m$ thanks to the induction assumptions and the definition of $\xi_2$. It is $\eta_mu_m \star P(\cdot, \cdot)$ on the side $z = 0$. Moreover, on the side $x_i = x^+$ or $x_i = x^-$ then it is bigger that $2$. Hence by maximum principle we have
\[
	u^*_m \leq \zeta_m ~~ \text{in} ~~ Q_m^\delta.
\]
On the other hand, it follows from Step 1 that for every $x \in B_{1+2^{-m-1}}$ and $z \in [0, \delta^m]$ we have
\begin{align*}
	\zeta_m(x, z) &\leq 2dc_1\exp\Big({-\frac{2^{-m}}{4(\sqrt{2}+1)\delta^m}}\Big) + \big((\eta_mu_m) \star P(\cdot, z)\big)(x)\\
	&\leq \lambda 2^{-m-2} + \big((\eta_mu_m) \star P(\cdot, z)\big)(x).
\end{align*}
Hence, for $x \in B_{1+2^{-m-1}}$ and $z \in [0, \delta^m]$, 
$$
	u^*_{m+1}  \leq \big(u^*_m - \lambda 2^{-m-1} \big)_+\leq \big((\eta_mu_m) \star P(\cdot, z) -\lambda 2^{-m-2} \big)_+.$$
This implies that
\begin{equation}\label{a1}
	\eta_{m+1}u^*_{m+1}  \leq \eta_{m+1}\big((\eta_mu_m) \star P(\cdot, z) -\lambda 2^{-m-2} \big)_+.
\end{equation}
From step 1 and the fact $A_m\leq M^{-m}$, we can use H\"older's inequality and \eqref{za2} to find that 
\[
	\big|(\eta_mu_m) \star P(\cdot, z)\big| \leq A_m^{1/2}\|P(\cdot, z)\|_{L^2(\mathbb{R}^d)} \leq \frac{M^{-m/2}}{\delta^{(m+1)d/2}}\|P(\cdot, 1)\|_{L^2(\mathbb{R}^d)}  \leq \lambda 2^{-m-2},
\]
for $\delta^{m+1}\leq z \leq \delta^m$. Combining this with \eqref{a1} yields 
\[
	\eta_{m+1}u^*_{m+1} = 0 ~~ \text{for} ~ \delta^{m+1} \leq z \leq \delta^m.
\]

\vspace{0.2cm}
\quad Let $m \geq 12d + 1$. Assume that \eqref{z2} is true for $k = m - 3, m - 2, m-1$ and \eqref{z3} is true for $k = m - 3$. In order to prove the estimate \eqref{z2} holds true at $k = m$ it is enough to show that 
\begin{align}\label{z6}
	A_m	\leq c_2^m \left(A_{m-3}^{\theta_1} + A_{m-3}^{\theta_2}\right), ~~ \theta_1, \theta_2 > 1.
\end{align}
for some $\theta_1,\theta_2>1$.
Since the functions $$\eta_{m-3}u^*_{m-3}\mathbf{1}_{\{0 < z < \delta^{m-3}\}} ~~ \text{and} ~~ (\eta_{m-3} u_{m-3})^*$$ have the same trace at $z = 0$ we imply 
\begin{align*}
&	\int_0^{\delta^{m-3}}\int_{\mathbb{R}^d}z^{1-2s}\left|\nabla\left(\eta_{m-3}u^*_{m-3}\right)\right|^2dxdz 
	\\& ~~~~~= \int_0^{\infty}\int_{\mathbb{R}^d}z^{1-2s}\left|\nabla\left[\eta_{m-3}u^*_{m-3}\mathbf{1}_{\{0 < z < \delta^{m-3}\}}\right]\right|^2dxdz \\ &~~~~~
	 \geq \int_0^{\infty}\int_{\mathbb{R}^d}z^{1-2s}\left|\nabla\left[(\eta_{m-3} u_{m-3})^*\right]\right|^2dxdz \\ & ~~~~~
	 = \int_{\mathbb{R}^d}\big|(-\Delta)^{s/2}(\phi_{m-3} u_{m-3})\big|^2dx.
\end{align*}
By integrating with respect to $t$ over $[- 1 - 2^{-m-3}, 0]$ we obtain
\begin{align*}
	&\int_{-1 - 2^{-m-3}}\int_0^{\delta^{m-3}}\int_{\mathbb{R}^d}z^{1-2s}\left|\nabla\left(\eta_{m-3}u^*_{m-3}\right)\right|^2dxdzdt \\
	&\qquad \qquad \geq \int_{-1 - 2^{-m-3}}\int_{\mathbb{R}^d}\big|(-\Delta)^{s/2}(\phi_{m-3} u_{m-3})\big|^2dxdt.
\end{align*}	
Hence, by using the Sobolev's inequality it follows from the definition of $A_k$ that 
\[
	A_{m-3} \geq C\|\phi_{m-3} u_{m-3}\|^2_{L^{\frac{2(d+1)}{d+1-2s}}(\mathcal{O}_{m-3})}, ~~ \text{with} ~~ \mathcal{O}_k =   \mathbb{R}^d \times [-1-2^{-k}, 0].
\]
On the other hand, from \eqref{a1} we have 
\[
	\eta_{m-2}u^*_{m-2} \leq \left[(\eta_{m-3}u^*_{m-3}) * P(z)\right]\eta_{m-2} ~~ \text{on}~~ \mathcal{O}_{m-3}^\star,
\]
with $ \mathcal{O}_{k}^\star = \mathbb{R}^d \times [0, \delta^{k}] \times [-1 - 2^{-k}, 0].$ Hence it follows from Young's inequality that
\[
	\left\|\eta_{m-2}u^*_{m-2} \right\|^2_{L^{\frac{2(d+1)}{d+1-2s}}(\mathcal{O}^\star_{m-2})} \leq \|P(1)\|^2_{L^1(\mathbb{R}^d)}\|\eta_{m-3} u_{m-3}\|^2_{L^{\frac{2(d+1)}{d+1-2s}}(\mathcal{O}_{m-3})}.
\]
Thus we obtain
\begin{align}\nonumber
	A_{m-3} & \geq C\left(\left\|\eta_{m-2}u^*_{m-2} \right\|^2_{L^{\frac{2(d+1)}{d+1-2s}}(\mathcal{O}^\star_{m-2})} +  \|\eta_{m-3} u_{m-3}\|^2_{L^{\frac{2(d+1)}{d+1-2s}}(\mathcal{O}_{m-3})}\right)\\
	&\geq C\left(\left\|\eta_{m-2}u^*_{m-1} \right\|^2_{L^{\frac{2(d+1)}{d+1-2s}}(\mathcal{O}^\star_{m-2})} +  \|\eta_{m-2} u_{m-1}\|^2_{L^{\frac{2(d+1)}{d+1-2s}}(\mathcal{O}_{m-2})}\right).\label{Z6}
\end{align}

\vspace{0.1cm}\noindent
Next, by the definition of $A_k$ we have
\[
	A_m  \leq \sup_{t_2\in [-1-2^{-m},0]}	\left(\int_{\mathbb{R}^d}\phi_{m}^2 u_{m}^2(t_2)dx \right.  \left.+	\int_{t_1}^{t_2}\int_0^{\delta^m}\int_{\mathbb{R}^d}z^{1-2s}|\nabla \left(\eta_{m} u^*_{m}\right)|^2dxdzdt\right),
\]
for any $t_1 \in [-1 - 2^{-m+1}, -1 - 2^{-m}]$. Then we can apply Proposition \ref{prop2} to obtain 
{
\begin{align*}
A_m &\lesssim  \int_{-1-2^{-m+1}}^{0}\int_{0}^{\delta^m}\int_{B_2} z^{1-2s}|\nabla\eta_{m}|^2(u^*_{m})^2dxdzdt\\
& \quad + M_0\int_{-1-2^{-m+1}}^{0}\int_{B_2}|\nabla \eta_{m}|^2u_{m}^2 dxds + \int_{-1-2^{-m+1}}^{0}\int_{B_2}\eta_m^2u_m|g| dxdt \\
& \quad + \int_{-1-2^{-m+1}}^{0}\int_{\mathbb{R}^d}\int_{\mathbb{R}^d} u_{m}(x) u_{m}(y)\frac{(\phi_{m}(x)-\phi_{m}(y))^2}{|x-y|^{d+2s}}dxdy dt.
\end{align*}
Using H\"older's inequality we have
\[
	\int_{-1-2^{-m+1}}^{0}\int_{B_2}\eta_m^2u_m|g| dxdt \leq \|g\|_{L^q(Q_4)} \left(\int_{-1-2^{-m+1}}^{0}\int_{\mathbb{R}^d} \left(\eta_{m-1}u_{m}\right)^{q'} dxds\right)^{\frac{1}{q'}}.
\]
}
So we obtain
\begin{align*}
	A_m &\lesssim  \int_{-1-2^{-m+1}}^{0}\int_{0}^{\delta^m}\int_{B_2} z^{1-2s}|\nabla\eta_{m}|^2(u^*_{m})^2dxdzdt\\
& \quad + M_0\int_{-1-2^{-m+1}}^{0}\int_{B_2}|\nabla \phi_{m}|^2u_{m}^2 dxds + \int_{-1-2^{-m+1}}^{0}\int_{B_2}\phi_{m}^2 u_{m}^2dxdt  \\
& \quad + \left(\int_{-1-2^{-m+1}}^{0}\int_{\mathbb{R}^d} \left(\eta_{m-1}u_{m}\right)^{q'} dxds\right)^{\frac{1}{q'}} \\
& \quad + \int_{-1-2^{-m+1}}^{0} \iint_{\mathbb{R}^d\times\mathbb{R}^d}   u_{m}(x) u_{m}(y)\frac{(\phi_{m}(x)-\phi_{m}(y))^2}{|x-y|^{d+2s}}dxdy dt.
\end{align*}

\noindent
It is easy to see that
\[
\mathbf{1}_{\left\{\eta_{m-1}u_m > 0 \right\}} \leq  \mathbf{1}_{\left\{\eta_{m-2}u_{m-1} > \frac{\lambda}{2^{m}}\right\}}.
\]
Hence, on the set $\{\eta_{m-1}u_m > 0\}$, we have
\[
\left(\frac{2^m}{\lambda}\eta_{m-1}u_{m}\right)^{q_1}  \leq \left(\frac{2^m}{\lambda}\eta_{m-2}u_{m-1}\right)^{q_1}  \leq \left(\frac{2^m}{\lambda}\eta_{m-2}u_{m-1}\right)^{q_2},
\]
for any $0 < q_1 < q_2$. So, as the proof of \eqref{z12}, by \eqref{Z6}, one has 
\begin{align}\label{z11}
	A_{m}  \leq &  C^m \left(A_{m-3}^{\theta_1} + A_{m-3}^{\theta_2}\right)\\
	& +  C\int_{-1 - 2^{-m + 1}}^{0}\int_{\mathbb{R}^d}\int_{\mathbb{R}^d} u_{m}(x) u_{m}(y)\frac{(\phi_{m}(x)-\phi_{m}(y))^2}{|x-y|^{d+1}}dxdy dt, \nonumber
\end{align}
 for some $\theta_1,\theta_2>1$. 
\vspace{0.2cm}
Now we estimate the last term in \eqref{z11}. To do this, we note that
\begin{align*}
J_1  & :=	\int_{-1-2^{-m+1}}^{0}\int_{B_{1+2^{-m+1/2}}} \int_{B_{1+2^{-m+1/2}}} u_m(x) u_m(y)\frac{(\phi_m(x)-\phi_m(y))^2}{|x-y|^{d+2s}}dxdydt \\
	& \leq C\int_{-1-2^{-m+1}}^{0}\int_{B_{1+2^{-m+1/2}}} \int_{B_{1+2^{-m+1/2}}}u_m(x) u_m(y)\frac{|\nabla\phi_m(\zeta)|^2}{|x - y|^{d+2s-2}}dxdydt \\
& \leq C2^{2m}\int_{-1-2^{-m+1}}^{0}\int_{B_{1+2^{-m+1/2}}}\int_{B_{1+2^{-m+1/2}}} \frac{\phi_{m-1}(x)u_m(x)\phi_{m-1}(y)u_m(y)}{|x-y|^{d+2s-2}}dxdydt\\
&\leq  C2^{2m}\int_{-1-2^{-m+1}}^{0}||\phi_{m-1}u_m||_{L^2}^2dt.
\end{align*}
So by the same way as above we obtain
\begin{equation}
\label{z13}
	J_1 \leq C\frac{2^{m(d+1)/d}}{\lambda^{2s/(d+1-2s)}}A_{m-3}^{\frac{d+1}{d+1-2s}}.
\end{equation}
Moreover, since $\phi_m(y) = 0$ for all $y \in B^c_{1+2^{-m+1/2}}$ it follows that
\begin{align}
J_2 & = \int_{-1-2^{-m+1}}^{0}\int_{B_{1+2^{-m + 1/2}}^c}\int_{B_{1+2^{-m + 1/2}}} u_m(x) u_m(y)\frac{(\phi_m(x)-\phi_m(y))^2}{|x-y|^{d+2s}}dxdydt  \label{z15} \\ 
& \leq \int_{-1-2^{-m+1}}^{0}\int_{B_{1+2^{-m + 1/2}}^c}\int_{B_{1+2^{-m}}} \frac{\phi_{m-1}(x)u_m(x)u_m(y)}{|x - y|^{d+2s}}dxdydt \nonumber \\
& \leq C\int_{-1-2^{-m+1}}^{0}\int_{B_{1+2^{-m + 1/2}}^c}\frac{u_m(y)}{|y|^{d+1}}dy\int_{B_{1+2^{-m}}}\phi_{m-1}(x)u_m(x)dxdt \nonumber \\
& \leq C\|\phi_{m-1}u_m\|_{L^{1}([-1-2^{-m+1}, 0] \times \mathbb{R}^d)}  \leq C\left(\frac{2^{m}}{\lambda}\right)^{\frac{2(d+1)}{d+1-2s}-1} A_{m-3}^{\frac{d+1}{d+1-2s}}. \nonumber
\end{align}
Here we used \eqref{z01} in the third inequality. Thus, combining \eqref{z11}, \eqref{z13} and  \eqref{z15}  one get 
\begin{align*}
A_{m}  &\leq   C^m \left(A_{m-3}^{\theta_1} + A_{m-3}^{\theta_2}\right)+C(J_1+J_2)\\&\leq  C^m \left(A_{m-3}^{\theta_1} + A_{m-3}^{\theta_2}+A_{m-3}^{\frac{d+1}{d+1-2s}}\right).
\end{align*}
This implies \eqref{z6} for some for some $c_2,\theta_1,\theta_2>1$. The proof is complete. 
\end{proof}

\noindent

\begin{proposition}\label{lem2}  Let $q > (d +1)/2s$. Assume that $g \in L^q(Q_4)$ and the vector field $\mathbf{B}$ with divergence free satisfy following conditions 
	\begin{align}\label{Z10}
		\|\mathbf{B}\|_{L^\infty(-4,0;L^{d/s}(B_2))}\leq M_0~~\text{and}~~ ||g||_{L^q(Q_4)}\leq 1.
	\end{align}
	Let $u$ be a solution of the problem \eqref{eq1} such that 
	\begin{align}\label{Z14}
		u^* \leq 2 ~~ \text{in} ~~ Q^\star_4, ~~~ \int_{B^c_4} \frac{(u(x, t)-2)_+}{|x|^{d+2s}}dx \leq 2 ~~ \text{for} ~~ t \in [-4, 0],
	\end{align}
	\vspace{-0.3cm}
	\begin{align*}
		\mathcal{L}_\omega^{d+2}\left(\left\{(x,z,t)\in Q_4^\star: u^*(x,z,t)\leq 0\right\}\right)\geq \mathcal{L}_\omega^{d+2}\left( Q_4^\star\right)/2.
	\end{align*}
	
	\noindent
	Then we can find a positive constant $C = C(s, d, M_0, \Lambda)$  such that the following statement holds: For each $\varepsilon_1 > 0$ there exists a positive constant $\delta_1 = \delta_1(s, d, M_0, \Lambda, \varepsilon_1)$ so that if
	\begin{equation*}
		\mathcal{L}_\omega^{d+2}\left(\left\{(x,z,t)\in Q_4^\star: 0< u^*(x,s,t)<1 \right\}\right)\leq \delta_1
	\end{equation*}
	then we have 
	\begin{equation}\label{zz06}
	\int_{Q_1}(u-1)_+^2dxdt+\int_{Q_1^\star}z^{1-2s}(u^* - 1)_+^2 dxdzdt\leq C\varepsilon_1^s.
	\end{equation}
	Here $\mathcal{L}_\omega^{d+2}$ is measure in space $\mathbb{R}^{d+2}$ with respect to the weighted measure $\omega dxdz := z^{1-2s}dxdz$. 
\end{proposition}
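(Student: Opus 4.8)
The plan is to follow the classical De Giorgi ``second lemma'' strategy: since the solution spends at least half of its measure in the region $\{u^\star \le 0\}$ but only a tiny measure $\delta_1$ in the intermediate strip $\{0 < u^\star < 1\}$, the energy inequality of Proposition~\ref{prop2} forces the bulk of the remaining measure to sit in $\{u^\star \le 0\}$ up to a controlled error, which is exactly the content of \eqref{zz06}. Concretely, I would introduce the truncation $v = (u-\tfrac12)_+ \wedge \tfrac12$ (so that $v$ interpolates between the levels $0$ and $1$) together with its $s$-harmonic extension $v^\star$, and apply the local energy inequality of Proposition~\ref{prop2} on the cylinder $Q_4^\star$ with cut-offs $\phi,\psi$ localizing from $Q_4$ to $Q_2$. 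The left-hand side controls $\int_{Q_2^\star} z^{1-2s}|\nabla(\eta v^\star)|^2$, and the right-hand side is bounded by a constant times $\varepsilon_1$-independent quantities plus $\|g\|_{L^q(Q_4)} \le 1$; crucially the Gagliardo term $\iint v(x)v(y)|\phi(x)-\phi(y)|^2|x-y|^{-d-2s}$ is controlled using the a priori bound $u^\star \le 2$ and the tail condition in \eqref{Z14} exactly as in the proof of Proposition~\ref{lem1}.

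Next I would run the key measure-theoretic step. By the De Giorgi isoperimetric inequality (the weighted version on $\mathbb{R}^{d+2}$ with $\omega\,dxdz = z^{1-2s}dxdz$, see e.g.\ the argument of Caffarelli--Vasseur), applied slicewise in $t$ to $v^\star$, the set where $v^\star$ transitions from $0$ to $1$ has measure bounded below by a dimensional constant times the product of the measures of $\{v^\star = 0\}$ and $\{v^\star = 1\}$ over the square root of the Dirichlet energy of $v^\star$ on that slice. Integrating in $t$ and invoking the energy bound just obtained, together with the hypotheses $\mathcal{L}_\omega^{d+2}(\{u^\star \le 0\} \cap Q_4^\star) \ge \mathcal{L}_\omega^{d+2}(Q_4^\star)/2$ and $\mathcal{L}_\omega^{d+2}(\{0 < u^\star < 1\}\cap Q_4^\star) \le \delta_1$, one deduces that $\mathcal{L}_\omega^{d+2}(\{u^\star \ge 1\} \cap Q_2^\star)$ is small: precisely, it is bounded by a constant (depending on $s,d,M_0,\Lambda$) times some power of $\delta_1$. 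Choosing $\delta_1 = \delta_1(\varepsilon_1)$ small enough makes this quantity $\le \varepsilon_1$.

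To finish, I would upgrade this smallness of a \emph{measure} to the $L^2$ smallness of $(u-1)_+$ and $(u^\star-1)_+$ claimed in \eqref{zz06}. For the extension term, write $\int_{Q_1^\star} z^{1-2s}(u^\star-1)_+^2 = \int_{Q_1^\star} z^{1-2s}(u^\star-1)_+^2 \mathbf{1}_{\{u^\star>1\}}$; since $0 \le (u^\star - 1)_+ \le 1$ on $Q_4^\star$ (from $u^\star \le 2$), this is bounded by $\mathcal{L}_\omega^{d+2}(\{u^\star>1\}\cap Q_1^\star) \le \varepsilon_1$. To extract the claimed exponent $\varepsilon_1^s$ rather than $\varepsilon_1$, one interpolates between this crude $L^\infty$-times-measure bound and the $L^{\frac{2(d+1)}{d+1-2s}}$-integrability of $(u^\star-1)_+$ coming from Sobolev applied to the energy bound, using H\"older with the appropriate exponent; the same reasoning handles the trace term $\int_{Q_1}(u-1)_+^2$. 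The main obstacle I anticipate is the careful bookkeeping in the slicewise isoperimetric argument — ensuring the weighted De Giorgi inequality applies uniformly in $t$ with constants independent of the slice, and that the integration in $t$ combined with Cauchy--Schwarz genuinely yields a power of $\delta_1$; everything else is a routine adaptation of \cite{CaVa}.
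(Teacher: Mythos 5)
There is a genuine gap at the heart of your second step. The hypothesis only controls the \emph{space-time} measure $\mathcal{L}_\omega^{d+2}\left(\{u^\star\leq 0\}\cap Q_4^\star\right)\geq \mathcal{L}_\omega^{d+2}(Q_4^\star)/2$; by pigeonholing this guarantees that the slicewise set $\mathcal{A}(t)=\{(x,z)\in B_4^\star: u^\star(x,z,t)\leq 0\}$ is large (say of weighted measure $\geq 1/4$) only at \emph{some} time $t_0\in[-4,-1]$, not for the times $t\in[-1,0]$ that are relevant for the conclusion on $Q_1$ and $Q_1^\star$. The weighted isoperimetric inequality of Proposition \ref{prop.a1} bounds $\mathcal{L}_\omega^{d+1}(\mathcal{B}(t))$ only in terms of the product with $\mathcal{L}_\omega^{d+1}(\mathcal{A}(t))$, so on a slice where $\mathcal{A}(t)$ is small it yields no information; and simply integrating the slicewise inequality in $t$, as you propose, cannot rule out the scenario in which $\{u^\star\leq 0\}$ fills the early times $t\in[-4,-2]$ while $u^\star\geq 1$ on most of $[-1,0]$, with a transition so fast in time that $\{0<u^\star<1\}$ stays below $\delta_1$. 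What excludes this scenario is not the isoperimetric inequality alone but the equation itself: the paper's proof uses the $L^\infty_t L^2_x$ part of the energy inequality of Proposition \ref{prop2} to show that $\int_{B_1}u_+^2(\cdot,t)$ cannot grow appreciably over a time interval of fixed length $\delta^\star$, and then iterates (energy bound $\Rightarrow$ $\mathcal{A}(t)$ stays large on $[t_n,t_{n+1}]\cap\mathbb{I}$ $\Rightarrow$ isoperimetric inequality again) to propagate $\mathcal{L}_\omega^{d+1}(\mathcal{A}(t))\geq 1/4$ from $t_0$ forward to all good times in $[-1,0]$. This time-propagation argument (Claim 3 in the paper) is the essential idea missing from your outline; without it the deduction that $\mathcal{L}_\omega^{d+2}\left(\{u^\star\geq 1\}\cap Q_2^\star\right)$ is small does not follow.

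A secondary point: your final passage from the smallness of $\mathcal{L}_\omega^{d+2}\left(\{u^\star>1\}\cap Q_1^\star\right)$ to the trace bound $\int_{Q_1}(u-1)_+^2\,dxdt$ is not routine, because for $s<1/2$ the weight $z^{1-2s}$ degenerates at $z=0$, so a small weighted measure near the boundary says little about $u$ itself. The paper handles this by writing $u=u^\star-\int_0^z\partial_{\bar z}u^\star\,d\bar z$, averaging in $z$ over $[0,\sqrt{\varepsilon_1}]$, and using the gradient energy bound, which is also where the exponent in $C\varepsilon_1^s$ (via $C\sqrt{\varepsilon_1}\leq C\varepsilon_1^s$ for $s\leq 1/2$) comes from; your proposed interpolation with the Sobolev norm would need to be carried out explicitly to replace this step.
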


\vspace{0.1cm}
\qquad In order to prove this lemma we need the following weighted version of De Giorgi's isoperimetric inequality which was proven in \cite{Cons-Wu}. Its unweighted version was given  in \cite{CaVa}).
\begin{proposition}{(\cite[Lemma 3.5]{Cons-Wu})}\label{prop.a1}
	Let $p >  \frac{1-s}{s}$ and let $f$ be a function defined in $B_r^\star$ such that
	\[
		\mathcal{K} := \int_{B_r^\star}z^{1-2s}|\nabla f|^2dxdz < \infty.
	\]
	We denote
	\begin{align*}
		& \mathcal{A} = \{(x, z) \in B_r^\star : f(x,z) \leq 0 \}, \\
		& \mathcal{B} = \{(x, z) \in B_r^\star : f(x,z) \geq 1 \}, \\
		& \mathcal{C} = \{(x, z) \in B_r^\star : 0 < f(x,z) < 1 \},
	\end{align*}
	and $\mathcal{L}_\omega^{d+1}$ is measure in space $\mathbb{R}^{d+1}$ with respect to the weighted measure $\omega dxdz := z^{1-2s}dxdz$. Then we have
	\[
		\mathcal{L}^{d+1}_\omega(\mathcal{A})\mathcal{L}^{d+1}_\omega(\mathcal{B}) \leq Cr^{\varrho}\mathcal{K}^{\frac{1}{2}}\left(\mathcal{L}^{d+1}_\omega(\mathcal{C})\right)^{\frac{1}{2p}},
	\]
	where
	\[
		\varrho = 1 + \frac{1}{2}\left(d + 1 - \frac{p+1}{p-1}(1-2s)\right)\left(1 - \frac{1}{p}\right).
	\]
\end{proposition}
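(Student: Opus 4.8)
The plan is to prove this as the weighted analogue of De~Giorgi's isoperimetric lemma, following \cite{Cons-Wu}: one runs the classical argument (as in \cite{CaVa}) for the function $f$, but carries the degenerate weight $\omega(x,z)=z^{1-2s}$ through every step and keeps track of the powers of $r$ so as to land on the exponent $\varrho$. The starting point is a one–dimensional fact along segments. For a.e.\ pair $X\in\mathcal A$, $Y\in\mathcal B$ the truncation $\bar f=\min\{1,\max\{f,0\}\}$ is absolutely continuous on $[X,Y]$, equals $0$ at $X$ and $1$ at $Y$, and satisfies $\nabla\bar f=\mathbf{1}_{\mathcal C}\nabla f$; hence
\[
1\ \le\ |Y-X|\int_0^1 |\nabla f(X+t(Y-X))|\,\mathbf{1}_{\mathcal C}\big(X+t(Y-X)\big)\,dt .
\]
One then integrates this against $\omega(X)\omega(Y)\,dX\,dY$ over $\mathcal A\times\mathcal B$ and bounds $|Y-X|$ by $\mathrm{diam}(B_r^\star)\lesssim r$.

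Next I would split the $t$–integral at $t=\tfrac12$. On $\{t\ge\tfrac12\}$, fixing $X$ and substituting $W=X+t(Y-X)$ (so $dY=t^{-(d+1)}\,dW$ with $t^{-(d+1)}\le2^{d+1}$, and $W\in B_r^\star$ by convexity), together with $\mathbf{1}_{\mathcal B}\le1$ and $\omega(Y)\le r^{1-2s}$, this piece is controlled by $C\,r^{1-2s}\,\mathcal{L}^{d+1}_\omega(\mathcal A)\int_{\mathcal C}|\nabla f|$; the range $\{t\le\tfrac12\}$ is symmetric and produces $C\,r^{1-2s}\,\mathcal{L}^{d+1}_\omega(\mathcal B)\int_{\mathcal C}|\nabla f|$. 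It remains to estimate $\int_{\mathcal C}|\nabla f|\,dx\,dz$: by Cauchy--Schwarz it is at most $\mathcal K^{1/2}\big(\int_{\mathcal C}z^{-(1-2s)}\,dx\,dz\big)^{1/2}$, and then by H\"older with exponents $p$ and $p'=\tfrac{p}{p-1}$,
\[
\int_{\mathcal C}z^{-(1-2s)}\,dx\,dz\ \le\ \Big(\int_{\mathcal C}z^{1-2s}\,dx\,dz\Big)^{1/p}\Big(\int_{B_r}\!\!\int_0^{r}z^{-(1-2s)\frac{p+1}{p-1}}\,dz\,dx\Big)^{1/p'},
\]
where the last integral is finite precisely because $(1-2s)\tfrac{p+1}{p-1}<1$, i.e.\ because of the hypothesis $p>\tfrac{1-s}{s}$; it equals $C\,r^{\gamma}$ for an explicit $\gamma=\gamma(d,s,p)$. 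Thus $\int_{\mathcal C}|\nabla f|\le C\,r^{\gamma/2}\,\mathcal K^{1/2}\,\mathcal{L}^{d+1}_\omega(\mathcal C)^{1/(2p)}$, and combining with $\mathcal{L}^{d+1}_\omega(\mathcal A)+\mathcal{L}^{d+1}_\omega(\mathcal B)\le2\,\mathcal{L}^{d+1}_\omega(B_r^\star)\lesssim r^{d+2-2s}$ one arrives at the claimed inequality once all the exponents of $r$ are collected into $\varrho$.

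The genuinely delicate point is the degenerate weight, in two respects. Analytically, since $z^{1-2s}$ vanishes at $z=0$ when $s<\tfrac12$, the hypothesis $\mathcal K<\infty$ alone does not give absolute continuity of $f$ along segments meeting $\{z=0\}$, so the pointwise inequality above must be obtained by first restricting to $\{z>\eps\}$ (where the weight is bounded below and the usual Sobolev line–restriction applies) and then letting $\eps\to0$, using that $B_r\times(0,\eps)$ has $\omega$–measure $O(\eps^{2-2s})\to0$. Quantitatively, the weight is exactly what forces the exponent $1/(2p)$ rather than the $1/2$ of the unweighted De~Giorgi lemma: the borderline integrability $\int_0^{r}z^{-(1-2s)(p+1)/(p-1)}\,dz<\infty\iff p>\tfrac{1-s}{s}$ is the mechanism, and it is also what produces part of $\varrho$. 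Everything else — the change of variables, the splitting in $t$, and the final bookkeeping of powers of $r$ — is routine, and the full argument is the one carried out in \cite{Cons-Wu}.
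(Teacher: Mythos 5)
Your overall strategy is sound and is, in essence, the argument behind the quoted result: the paper itself offers no proof (it cites \cite[Lemma 3.5]{Cons-Wu}), and your segment-integration version of De Giorgi's isoperimetric lemma with the weight $z^{1-2s}$, the splitting at $t=\tfrac12$ with Jacobian $t^{-(d+1)}\le 2^{d+1}$, and the Cauchy--Schwarz plus H\"older step (with the equivalence $p>\tfrac{1-s}{s}\Leftrightarrow(1-2s)\tfrac{p+1}{p-1}<1$, which you state correctly) is the standard reconstruction of that proof. One small remark: your worry about segments meeting $\{z=0\}$ is moot, since a segment joining two points of $B_r^\star$ has $z$-coordinate bounded below by the minimum of the endpoints' heights, so the usual Fubini argument on $\{z>0\}$ already gives a.e.\ absolute continuity; the $\eps$-truncation is harmless but unnecessary.

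The genuine problem is the final bookkeeping: your chain does \emph{not} land on $\varrho$. You pick up $r$ from the diameter, $r^{1-2s}$ from $\omega(Y)\le r^{1-2s}$, $r^{\varrho-1}$ from the H\"older factor (indeed $\bigl(\int_{B_r}\int_0^r z^{-(1-2s)\frac{p+1}{p-1}}dz\,dx\bigr)^{\frac{p-1}{2p}}=Cr^{\varrho-1}$), and $r^{d+2-2s}$ from $\mathcal{L}^{d+1}_\omega(\mathcal A)+\mathcal{L}^{d+1}_\omega(\mathcal B)\lesssim r^{d+2-2s}$, so the total exponent is $\varrho+d+3-4s$, not $\varrho$. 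This is not a cosmetic issue that a cleverer accounting could fix: the inequality with $r^{\varrho}$ and a constant independent of $r$ is false for large $r$ (take $f(x,z)=\min\{1,\max\{0,x_1\}\}$ on $B_r^\star$: the left-hand side is $\sim r^{2(d+2-2s)}$ while the right-hand side is $\sim r^{d+\frac32-\frac1{2p}}$), and $\varrho+d+3-4s$ is exactly the scale-invariant exponent. What your argument does prove is the stated bound for $r\le 1$, or for $r$ in any bounded range up to a dimensional constant --- in particular at $r=4$, the only scale at which the proposition is used in Proposition \ref{lem2} --- so it suffices for the paper's purposes, but you should delete the claim that ``the exponents collect into $\varrho$'' and either keep the extra factor $r^{1-2s}\bigl(\mathcal{L}^{d+1}_\omega(\mathcal A)+\mathcal{L}^{d+1}_\omega(\mathcal B)\bigr)$ explicitly or state the lemma at a fixed scale. (Incidentally, for $t\ge\tfrac12$ one has $z_Y\le 2z_W$, so you may keep $\omega(W)$ instead of paying $r^{1-2s}$, and weighted Cauchy--Schwarz then even yields the power $\mathcal{L}^{d+1}_\omega(\mathcal C)^{1/2}$; that is a stronger variant than the quoted statement, which again shows the exponent as printed should not be taken as the target of the computation.)
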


\begin{proof}[Proof of Lemma \ref{lem2}]
	We use the similar arguments as in \cite{CaVa}.
	
	\vspace{0.2cm}
	\qquad It follows from the energy inequality \eqref{energe1} of Proposition \ref{prop2} and assumptions \eqref{Z10}, \eqref{Z14}   and Holder's inequality that there exists a positive constant $C$ such that
	\begin{equation}\label{Z15}
		\int_{Q_4^\star}z^{1-2s}\left|\nabla u^*_+\right|^2 dxdzdt \leq C.
	\end{equation}
	Let $\varepsilon_1 \ll 1$. We put
	\begin{gather*}
	 		\Upsilon =  4\varepsilon_1^{-1}\int_{Q_4^\star}z^{1-2s}\left|\nabla u^*_+\right|^2 dxdzdt, \\
			\mathbb{I}_1 = \left\{t \in [-4, 0] : \int_{B_4^\star}z^{1-2s}\left|\nabla u^*_+\right|^2 dxdz \leq \Upsilon \right\},
	\end{gather*}
	and for every $t \in \mathbb{I}_1$,
	\begin{align*}
	& \mathcal{A}(t) = \Big\{(x, z) \in B_4^\star : u^*(x, z, t) \leq 0 \Big\},\\
	& \mathcal{B}(t) = \Big\{(x, z) \in B_4^\star : u^*(x, z, t) \geq 1 \Big\}, \\
	& \mathcal{C}(t) = \Big\{(x, z) \in B_4^\star : 0 < u^*(x, z, t) < 1 \Big\}.
	\end{align*}
	
	\noindent
	Given a fixed number $p > (1-s)s^{-1}$.
	
	\vspace{0.2cm}\noindent
	{\bf Claim 1.} ~  If $\mathbb{I}$ is the set defined by
	\[
		\mathbb{I} = \mathbb{I}_1 \cap \mathbb{I}_2 ~~ \text{with} ~~ \mathbb{I}_2 := \Big\{t \in [-4, 0] : \big(\mathcal{L}_\omega^{d+1}(\mathcal{C}(t))\big)^{\frac{1}{2p}} \leq \varepsilon_1^{1 + \frac{p}{s}}\Big\}.
	\]
	Then we have 
	\begin{equation}\label{Z17}
		\mathcal{L}^1\big([-4, 0] \setminus \mathbb{I}\big) \leq \frac{\varepsilon_1}{2},
	\end{equation}
	provided that we choose $\delta_1$ such that $\delta_1 < \varepsilon_1^{2p(1+p/s) + 2}$. This is because the following inequalities hold true 
	\begin{equation}\label{zz01}
		\mathcal{L}^1\left([-4, 0] \setminus \mathbb{I}_1 \right) \leq \Upsilon^{-1}\int_{Q_4^\star}z^{1-2s}\left|\nabla u^*_+\right|^2 dxdzdt \leq \frac{\varepsilon_1}{4},
	\end{equation}
	\begin{align}\label{zz02}
	\mathcal{L}^1\left([-4, 0] \setminus \mathbb{I}_2\right) &\leq \varepsilon_1^{-2p(1+p/s)} \mathcal{L}_\omega^{d+2}\left(\{(x, z, t) : 0 < u^* <1 \}\right) \leq \varepsilon_1^{-2p(1+p/s)}\delta_1  \leq \frac{\varepsilon_1}{4},
	\end{align}
	thanks to Chebyshev's inequality.
	
	\vspace{0.2cm}\noindent
	{\bf Claim 2.} ~ If $t \in \mathbb{I}$ such that $\mathcal{L}_\omega^{d+1}(\mathcal{A}(t)) \geq 1/4$ then
	\begin{align}\label{zz04}
		& \int_{B_4^\star}z^{1-2s} (u^*_+)^2dxdz  \leq \varepsilon_1^\frac{p}{s} ~~ \text{and} ~~ \int_{B_4}u_+^2dx \leq \varepsilon_1^\frac{1-s}{2s}.
	\end{align}
	Indeed, we use first the Proposition \ref{prop.a1} to obtain
	\[
	\mathcal{L}_\omega^{d+1}(\mathcal{B}(t)) \leq C\Upsilon^{\frac{1}{2}}\left(\mathcal{L}_\omega^{d+1}(\mathcal{C}(t))\right)^{\frac{1}{2p}} 	(\mathcal{L}_\omega^{d+1}(\mathcal{A}(t)))^{-1}\leq C\varepsilon_1^{1/2 + p/s}.
	\]
	Hence, since $u^* \leq 2$ in $Q_4^*$ we have
	\begin{equation}
	\label{Z11}
		\int_{B_4^\star}z^{1-2s}(u^*_+)^2dxdz \leq 4\left(\mathcal{L}_\omega^{d+1}(\mathcal{B}(t)) + \mathcal{L}_\omega^{d+1}(\mathcal{C}(t))\right) \leq C\varepsilon_1^{\frac{1}{2}+\frac{p}{s}} \leq  \varepsilon_1^\frac{p}{s}.
	\end{equation}

	On the other hand, for any $z$, it is clear that
	\[
		\int_{B_4}u_+^2 dx = \int_{B_4}(u^*_+(x,z))^2dx - 2\int_0^z\int_{B_4}u^*_+\partial_z u^*dxd\bar{z}.
	\]
	Then by integrating in $z$ on $[0, 1]$ this implies
	\begin{equation}\label{Z9}
		\int_{B_4}u_+^2 dx \leq \int_{B^\star_4}(u^*_+)^2dxdz + 2\int_{B^\star_4}|u^*_+|\,|\partial_z u^*|dxdz.
	\end{equation}
	From H\"older's inequality and the fact that $u^* \leq 2$ we have
	\begin{align*}
		\int_{B^\star_4}(u^*_+)^2dxdz &\leq 2^{2-\sigma}\int_{B^\star_4}z^{\frac{(2s-1)\sigma}{2}}z^{\frac{(1-2s)\sigma}{2}}(u^*_+)^{\sigma}dxdz \\
		&\leq 2^{2-\sigma}\left(\int_{B^\star_4}z^{\frac{(2s-1)\sigma}{2-\sigma}} \right)^{\frac{2-\sigma}{2}}\left(\int_{B_4^\star}z^{1-2s}(u^*_+)^2dxdz\right)^{\frac{\sigma}{2}},
	\end{align*}
	where $\sigma < (1-s)^{-1}$. Since $p\sigma > 1$ it follows that
\begin{equation}\label{Z12}
	\int_{B^\star_4}(u^*_+)^2dxdz \leq C\varepsilon_1^{\frac{p\sigma}{2s}}\leq \varepsilon_1^{\frac{1}{2s}} .
\end{equation}
	Similarly we have from H\"older's inequality 
	\begin{align*}
		&2\int_{B^\star_4}|u^*_+|\,|\partial_z u^*|dxdz  \leq 2^{2-1/p}\int_{B^\star_4}z^{\frac{1-2s}{2p}}|u^*_+|^{\frac{1}{p}}\, z^{\frac{1-2s}{2}}|\partial_z u^*|\, z^{-\frac{(1-2s)(p+1)}{2p}}dxdz \\
		& \leq C\left(\int_{B_4^\star}z^{1-2s}|\partial_z u^*|^2dxdz\right)^\frac{1}{2}\left(\int_{B_4^\star}z^{1-2s}|u^*|^2dxdz\right)^\frac{1}{2p}\left(\int_{B_4^\star} z^{-\frac{(1-2s)(p+1)}{p-1}}dxdz\right)^{\frac{p-1}{2p}}\\ & \overset{\eqref{Z11}}\leq C\Upsilon^{\frac{1}{2}}\varepsilon_1^{\frac{1}{4p} + \frac{1}{2s}},
	\end{align*}
	provided that $\frac{(1-2s)(p+1)}{p-1}<1$. 
	This implies
	\begin{equation}\label{Z13}
		\int_{B^\star_4}|u^*_+|\,|\partial_z u^*|dxdz  \leq C\varepsilon_1^{-\frac{1}{2}+\frac{1}{4p} + \frac{1}{2s}}.
	\end{equation}
	And therefore we obtain from \eqref{Z9},\eqref{Z12} and \eqref{Z13} that
	\[
		\int_{B_4}u_+^2dx \leq \varepsilon_1^{\frac{1}{2s}}+C\varepsilon_1^{-\frac{1}{2}+\frac{1}{4p} + \frac{1}{2s}}\leq  \varepsilon_1^{\frac{1-s}{2s}}.
	\]
So, we get \eqref{zz04} for $\varepsilon_1>0$ small enoough. 
	
	\vspace{0.2cm}\noindent
	{\bf Claim 3.} ~ If $t \in \mathbb{I} \cap [-1, 0] $ then $\displaystyle \mathcal{L}_\omega^{d+1}(\mathcal{A}(t)) \geq \frac{1}{4}$.
	
	\noindent
	Indeed, since $$\mathcal{L}_\omega^{d+2}\left(\left\{(x,z,t)\in Q_4^\star: u^*(x,z,t)\leq 0\right\}\right)\geq \mathcal{L}_\omega^{d+2}\left( Q_4^\star\right)/2,$$ there exists $-4\leq t_0 \leq -1$ such that $\mathcal{L}_\omega^{d+1}(\mathcal{A}(t_0)) \geq 1/4$. Thus, by using Claim 2 it follows that
	\[
		\int_{B_4} u_+^2(x, t_0)dx \leq \varepsilon_1^{\frac{1-s}{2s}}.
	\]
	Let $r > 0$ be a small number. Combining this estimate and the energy inequality \eqref{energe1} of Proposition \ref{prop2} and assumptions \eqref{Z10}, \eqref{Z14}   and Holder's inequality  we deduce that, for any $t \geq t_0$,
	\begin{align*}
	\int_{B_1}u_+^2(x, t)dx  \notag & \leq  \int_{B_4}u_+^2(x, t_0)dx + C(t-t_0+(t-t_0)^{\frac{q-1}{q}}), \nonumber
	\end{align*}
	So for $t - t_0 \leq \delta^* = \big(\min\{1, (10^{10}C)^{-1}\}\big)^{\frac{10q}{q-1}}$, we have
	\begin{equation}\label{Z16}
		\int_{B_1}u_+^2(x, t)dx \leq 10^{-9}, ~~ \text{for all} ~~ t_0 \leq t \leq t_0 + \delta^\star.
	\end{equation}
	for $\varepsilon_1>0$ small enough. It is also noted that $\delta^\star$ do not depend on $\varepsilon_1$. Hence we can suppose  that $\varepsilon_1 \ll \delta^\star$.
	
	\vspace{0.2cm}
	\quad Next, we have
	\begin{align*}
		u^*_+ & = u_+ + \int_0^z\partial_{\bar{z}} u^*_+d\bar{z}.
	\end{align*}
	This implies that
	\begin{align*}
		z^{1-2s}(u^*_+)^2 & \leq  2z^{1-2s}u_+^2  + 2z^{1-2s}\left(\int_0^z\partial_{\bar{z}}u^*_+d\bar{z}\right)^2 \\
		& \leq 2z^{1-2s}u_+^2  + \frac{z}{s}\int_0^z \bar{z}^{1-2s}|\nabla u^*_+|^2d\bar{z}.
	\end{align*}
	Thus, for given $t \in \mathbb{I}$ such that $t_0 \leq t \leq t_0 + \delta^\star$, by integrating in $(x, z)$ on $B_1 \times [0, \varepsilon_1^{1/s}]$ we get
	\begin{align*}
		\int_0^{\varepsilon_1^{1/s}}\int_{B_1}z^{1-2s}(u^*_+)^2dxdz  & \leq  \frac{\varepsilon_1^{2(1-s)/s}}{1-s}\int_{B_1}u_+^2dx  + \frac{\varepsilon_1^{2/s}}{2s}\int_0^{\varepsilon_1^{1/s}}\int_{B_1} z^{1-2s}|\nabla u^*_+|^2dz \\
		& \overset{\eqref{Z16},\eqref{Z15}}\leq 10^{-8}\varepsilon_1^{2(1-s)/s}+ C(s)\varepsilon_1^{2/s-1} \leq 10^{-7}\varepsilon_1^{2(1-s)/s}.
	\end{align*}
	Using Chebyshev's inequality again, it follows that
	\begin{align*}
	\mathcal{L}_\omega^{d + 1}(\{x \in B_1, z \in [0, \varepsilon_1^{1/s}] : u^*_+\geq 1\})  \leq 10^{-7}\varepsilon_1^{2(1-s)/s}.
	\end{align*}
	Since $\mathcal{L}_\omega^{d+1}(\mathcal{C}(t)) \leq \varepsilon_1^{2p(1+\frac{p}{s})}$ we have 
$$
		\mathcal{L}_\omega^{d+1}(\mathcal{A}(t))  \geq \frac{\varepsilon_1^{2(1-s)/s}}{2(1-s)}\mathcal{L}^{d}(B_1) - 10^{-7}\varepsilon_1^{2(1-s)/s} - \varepsilon_1^{2p(1+\frac{p}{s})} \geq \frac{\varepsilon_1^{2(1-s)/s}}{4}.$$
	for $\varepsilon_1>0$ small enough. \\
	Using again Proposition \ref{prop.a1} we get
	\[
		\mathcal{L}_\omega^{d+1}(\mathcal{B}(t)) \leq C\frac{\Upsilon^{\frac{1}{2}}\mathcal{L}_\omega^{d+1}(\mathcal{C}(t))^{\frac{1}{2p}}}{\mathcal{L}_\omega^{d+1}(\mathcal{A}(t))} \leq C\varepsilon_1^{\frac{1}{2}+\frac{p}{s} - 2(1-s)/s} \leq C\sqrt{\varepsilon_1}.
	\]
	So 
	\[
		\mathcal{L}_\omega^{d+1}(\mathcal{A}(t)) \geq \mathcal{L}_\omega^{d+1}(B_4^\star)  - \mathcal{L}_\omega^{d+1}(\mathcal{B}(t)) - \mathcal{L}_\omega^{d+1}(\mathcal{C}(t)) \geq \frac{1}{4}.
	\]
	In summary, we have proved that $ \mathcal{L}_\omega^{d+1}(\mathcal{A}(t)) \geq 1/4$ for every $t \in \mathbb{I}\cap [t_0, t_0 + \delta^*]$. It is easy to find from \eqref{Z17} that on $[t_0 + \delta^\star/2, t_0 + \delta^\star]$ there exists $t_1 \in \mathbb{I}$. Hence we can find an increasing sequence $\{t_n\}_{n=1}^\infty$ such that
	\begin{align}
		&  t_0 + \frac{n\delta^\star}{2} \leq t_n \leq 0, \label{zz05} \\
		& \mathcal{L}_\omega^{d+1}(\mathcal{A}(t)) \geq \frac{1}{4} ~~ \text{if} ~~ t \in \Big[t_n, t_n + \delta^\star\Big] \cap \mathbb{I} \supset [t_n, t_{n+1}] \cap \mathbb{I}.
	\end{align}
	Therefore we conclude that $\mathcal{L}_\omega^{d+1}(\mathcal{A}(t)) \geq 1/4$ on $\mathbb{I} \cap [-1, 0]$.
	
	\vspace{0.2cm}\noindent
	{\bf The estimate \eqref{zz06} holds true.}
	
	\vspace{0.2cm}
	From Claim 3 and Proposition \ref{prop.a1}  we imply that, for each $t \in \mathbb{I} \cap [-1, 0]$, $$\mathcal{L}_\omega^{d+1}(\mathcal{B}(t)) \leq 4C\varepsilon_1^{\frac{1}{2}+\frac{p}{s}} \leq \frac{\varepsilon_1}{16}$$ which leads to
	\[
		\mathcal{L}_\omega^{d+2}(\{(x, z, t) \in Q_1^\star : u^* \geq 1 \}) \leq \frac{\varepsilon_1}{16} + 	\mathcal{L}^1\big([-4, 0] \setminus \mathbb{I}\big)  \overset{\eqref{Z17}}\leq \varepsilon_1.
	\]
	Hence, by $(u^* - 1)_+ \leq 1$,
\begin{equation}\label{Z18}
	\int_{Q_1^\star}z^{1-2s}(u^* - 1)_+^2dxdzdt \leq \varepsilon_1.
\end{equation}

	For every fixed $x$ and $t$, we have
	\[
		u = u^* -\int_0^z\partial_z u^*d\bar{z}.
	\]
	Hence, for any $z$,
	\begin{align*}
		z^{1-2s}(u-1)_+^2 & \leq 2z^{1-2s}(u^* - 1)_+^2 + 2z^{1-2s}\left(\int_0^z|\nabla(u^*|d\bar{z}\right)^2 \\
		& \leq 2z^{1-2s}(u^* - 1)_+^2 + z\int_0^z\bar{z}^{1-2s}|\nabla(u^*|^2d\bar{z}.
	\end{align*}
	By taking the average in $z$ on $[0, \sqrt{\varepsilon_1}]$ we get
	\[
		\frac{\varepsilon_1^{\frac{1-2s}{2}}}{2(1-s)}(u-1)_+^2  \leq \frac{2}{\sqrt{\varepsilon_1}}\int_0^{\sqrt{\varepsilon_1}}z^{1-2s}(u^* - 1)_+^2 dz + \sqrt{\varepsilon_1}\int_0^{\sqrt{\varepsilon_1}}z^{1-2s}|\nabla(u^*|^2dz.
	\]
	Therefore, by Integrating with respect to $(x, t)$ on $B_1 \times [-1, 0]$ it follows from \eqref{Z15} and \eqref{Z18} that
	\[
		\frac{\varepsilon_1^{\frac{1-2s}{2}}}{2(1-s)}	\int_{Q_1}(u - 1)_+^2dxdt \leq 2\sqrt{\varepsilon_1}.
	\]
	So, 
	\begin{equation*}
	\int_{Q_1}(u - 1)_+^2dxdt \leq C\sqrt{\varepsilon_1}.
	\end{equation*}
	The results follows from this and \eqref{Z18}. The proof is complete. 
\end{proof}

\begin{proof}[Proof of Lemma \ref{osclem1}] 
	Put $\varepsilon_1 = \displaystyle \hat{\varepsilon}_0^2/16C^2$  with $C$ is the constant defined in Lemma \ref{lem2}. Let us take the constants $\lambda$ and $\delta_1$ be associated to $\varepsilon_0$ as in Lemma \ref{lem1} and to $\varepsilon_1$ as in Lemma \eqref{lem2}, respectively.
	
	\vspace{0.1cm}
	\quad Define $\bar{u}_0 = u$ and  
	$$
		\bar{u}_k=2(\bar{u}_{k-1}-1), ~~ \text{for every $k\in \mathbb{N}$, $k\leq K_0=[1 + |B_1|(4(1-s)\delta_1)^{-1}]$}.
	$$
\noindent
	Then, for every $k$, 
	\begin{itemize}
		\item[$\diamond$] $\bar{u}_k=2^k(u-2)+2$, 
		\item[$\diamond$] $\bar{u}_k$ is solution \eqref{eq1} with data $2^kg$ and satisfies $(\bar{u}_k)^* \leq 2$. Moreover, we also have
		\begin{equation*}
			\int_{B^c_1} \frac{(\bar{u}_k(x)-2)_+}{|x|^{d+2s}}dxdt \leq 1,
		\end{equation*}
		and
		\begin{equation*}
			\mathcal{L}_\omega^{d+2}\left(\left\{(x,z,t)\in Q_1^\star: (\bar{u}_k)^*(x,z,t)\leq 0\right\}\right)\geq  \frac{|B_1|}{4(1-s)},
		\end{equation*}
		where  we choose $\varepsilon_2>0$ such that $2^{K_0}\varepsilon_2= 1.$
	\end{itemize}
	
	 \noindent
	 Now we consider two following cases:
	 
	 \vspace{0.2cm}\noindent
	 {\bf Case 1}: ~ There is $k_0 \leq K_0$ such that
	 \[
	 		\mathcal{L}_\omega^{d+2}\left(\left\{(x,z,t)\in Q_1^\star: 0< (\bar{u}_{k_0})^*(x,z,t) < 1\right\}\right)\leq  \delta_1.
	 \]
	By applying Lemma \eqref{lem2} for solution $\bar{u}_{k_0 }$ with data $2^{k_0}g$, and then Lemma \ref{lem1} for solution $\bar{u}_{k_0 + 1}$ with data $2^{k_0+1}g$, we obtain 
	$$
		\bar{u}_{k_0+1} \leq 2-\lambda ~~ \text{on} ~~ Q_{\frac{1}{8}}.
	$$ 
	Thus, 
	\begin{equation*}
		u\leq 2-2^{-k_0-1}\lambda\leq 2-2^{-K_0}\lambda ~~\text{in}~~Q_{1/8}.
	\end{equation*}

	 \vspace{0.2cm}\noindent
	{\bf Case 2}: ~ For all $k\leq K_0$, we have 
		$$
			\mathcal{L}_\omega^{d+2}\left(\left\{(x,z,t)\in Q_1^\star: 0< (\bar{u}_{k})^*(x,z,t)<1\right\}\right) > \delta_1.
		$$ 
		Then for every $k\leq K_0$,
	\begin{align*}
		\mathcal{L}_\omega^{d+2}\left(\left\{(x,z,t) \in Q_1^\star: (\bar{u}_{k})^* < 0\right\}\right) & =	\mathcal{L}_\omega^{d+2}\left(\left\{(x,z,t) \in Q_1^\star: (\bar{u}_{k-1})^* < 1\right\}\right) \\
		& \geq \delta_1 +\mathcal{L}_\omega^{d+2}\left(\left\{(x,z,t) \in Q_1^\star: (\bar{u}_{k-1})^* \leq 0\right\}\right).
	\end{align*}
	Thus, we get 
	$$	
		\mathcal{L}_\omega^{d+2}\left(\left\{(x,z,t) \in Q_1^\star: (\bar{u}_{K_0})^* < 0\right\}\right)\geq \frac{|B_1|}{4(1-s)},
	$$ 
	and it leads to $(\bar{u}_{K_0})^* \leq 0$ almost everywhere,  which means $$u^* \leq 2-2^{-K_0}.$$ 
	Then, as the proof of \cite[Propsition 9]{CaVa}, we get \eqref{z7}. 
\end{proof}

\setcounter{equation}{0}
\section{Proof of the main results}

 \qquad This section is devoted proving the main results, Theorem \ref{eq1} and Corollary \ref{XX1}. Firstly, we shall prove the following Theorem which leads to Theorem \ref{eq1} as a direct consequence.
\begin{theorem}\label{XX} Assume that the vector field $\mathbf{B}$ satisfies the conditions 
	\begin{align*}
	\sup\limits_{x\in \mathbb{R}^{d}, \,\, t>-2}\left|\fint_{x + B_1} \mathbf{B}(y, t)dy \right| \leq M_1 \quad \text{and} \quad  \|\mathbf{B}\|_{L^\infty((-2,
	\infty); \mathbb{X}^s)} \leq M_2,
	\end{align*}
	for some $M_1, M_2 \geq 1$. There exists $\varepsilon_0>0$  such that if we have
	\begin{equation}
	\|u(-2)\|_{L^{2}(\mathbb{R}^{d})}+	\|g\|_{L^1(\mathbb{R}^{d}\times (-2,\infty))} + \|g\|_{L^{q}(\mathbb{R}^{d}\times (-2,\infty)}\leq \varepsilon_0, 
	\end{equation}
	then the following estimate holds true
		\begin{equation}\label{z14'}
	\sup_{|x_1 - x_2| + |t_1 - t_2|^{1/(2s)} \leq \rho_0, t_1>-1/2}\frac{|u(x_1, t_1) - u(x_2, t_2)|}{\big(|x_1 - x_2| + |t_1 - t_2|^{1/(2s)}\big)^\alpha} \leq  C,
	\end{equation}
	where $\rho_0=C(M_2)/M_1\leq 1/10$.
\end{theorem}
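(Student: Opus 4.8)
The plan is to run a De Giorgi oscillation‑reduction scheme in which, at each step, the solution is rescaled both in amplitude and in the parabolic space–time variables, and the spatial center of the reference cylinder is transported along the local average of $\mathbf B$; Proposition~\ref{osclem1} is the only quantitative input, one application of it producing one gain of oscillation. \textbf{Step 1 (normalization and base case).} By Remark~\ref{ZA} the hypothesis $\|u(-2)\|_{L^2(\R^d)}+\|g\|_{L^1}+\|g\|_{L^q}\le\varepsilon_0$ makes $\|u\|_{L^\infty(\R^{d}\times(-1,\infty))}$ as small as we like, hence so is $\|u^\star\|_{L^\infty}$ by the Poisson representation; in particular $u^\star\le 2$ and the tail $\int_{B_1^c}\frac{(u-2)_+}{|x|^{d+2s}}dx$ and the local $L^2$ quantities lie below the required thresholds. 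Since $\mathcal L_\omega^{d+2}(\{u^\star\le 0\}\cap Q_1^\star)+\mathcal L_\omega^{d+2}(\{u^\star\ge 0\}\cap Q_1^\star)=|B_1|/(2(1-s))$, one of the two is $\ge|B_1|/(4(1-s))$; after replacing $u$ by $-u$ if necessary (it solves \eqref{eq1} with datum $-g$, leaving $|g|$ and every hypothesis unchanged) the measure hypothesis of Proposition~\ref{osclem1} holds.

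\textbf{Step 2 (the rescaled iterates).} Put $r_k=r_0\,16^{-k}$ with $r_0=C(M_2)/M_1\le 1/10$, and construct inductively points $x_k$ and reals $\mu_k,a_k$ with $a_0\sim\|u\|_{L^\infty}$ and $a_{k+1}=(1-\lambda^\star/4)\,a_k$, such that
\[
v_k(y,\tau)=\frac{1}{a_k}\Big(u\big(r_k y+x_k,\ T+r_k^{2s}\tau\big)-\mu_k\Big),\qquad (y,\tau)\in\R^{d}\times[-4,0],
\]
satisfies $|v_k^\star|\le 2$ on $Q_1^\star$ together with the tail and measure hypotheses of Proposition~\ref{osclem1}. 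The centers are updated by $x_{k+1}=x_k+r_k^{2s}\,\overline{\mathbf B}_k$, where $\overline{\mathbf B}_k$ is an (appropriately weighted) average of $\mathbf B$ over the $k$‑th ball; this change of frame removes the mean of the drift and turns \eqref{eq1} into an equation of the same type for $v_k$, with kernel $\mathbf K^{(k)}$ still satisfying (i)–(iv) with the \emph{same} $\Lambda$ (this is precisely the role of the homogeneity $-2s$, which makes (i)–(iv) scale invariant), divergence‑free drift $\mathbf B_k(y,\tau)=r_k^{2s-1}\big(\mathbf B(r_k y+x_k,\cdot)-\overline{\mathbf B}_k\big)$, and source $g_k(y,\tau)=a_k^{-1}r_k^{2s}\,g(r_k y+x_k,\ T+r_k^{2s}\tau)$.

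\textbf{Step 3 (propagation of the hypotheses).} Three uniform‑in‑$k$ checks are needed. (a) \emph{Drift}: for $s=1/2$, taking $\overline{\mathbf B}_k=\fint_{B_{r_k}(x_k)}\mathbf B$ and using John–Nirenberg gives $\|\mathbf B_k\|_{L^\infty(-4,0;L^{2d}(B_2))}\le C(d)M_2$; for $0<s<1/2$, taking $\overline{\mathbf B}_k=\mathbf B(x_k,\cdot)$ and using $\mathbf B\in C^{1-2s}$ gives $\|\mathbf B_k\|_{L^\infty(-4,0;L^{d/s}(B_2))}\le C(d)M_2$; in both cases the powers of $r_k$ cancel exactly, so Proposition~\ref{osclem1} applies with $M_0=M_0(d,M_2)$ (hence with $\lambda^\star,\varepsilon_2$) independent of $k$. (b) \emph{Source}: $\|g_k\|_{L^q(Q_4)}\le a_k^{-1}r_k^{2s-(d+2s)/q}\|g\|_{L^q}$ and $a_k\sim a_0(r_k/r_0)^{\alpha}$, so the bound is a fixed nonnegative power of $r_k$ times $\varepsilon_0/a_0$ for $\alpha$ small enough in the range allowed by the statement; taking $\varepsilon_0$ small keeps $\|g_k\|_{L^q(Q_4)}^2\le\hat\varepsilon_0$ for all $k$. (c) \emph{Tail and measure}: using $|u-\mu_j|\le 2a_j$ on the $j$‑th cylinder for every $j\le k$, that $(\mu_j)$ is Cauchy with $|\mu_j-\mu_k|\lesssim a_j$, and that the cylinders are nested — this is where $r_0\le C(M_2)/M_1$ enters, making the total displacement $\sum_j r_j^{2s}|\overline{\mathbf B}_j|$ stay below $\tfrac12$ — one gets $|v_k(x)|\lesssim |x|^{\alpha}$ for $|x|\ge 1$, whence $\int_{B_1^c}\frac{(v_k-2)_+}{|x|^{d+2s}}dx\le\varepsilon_2$ for $\alpha>0$ small (here $\alpha<2s$ is needed), exactly as in \cite{CaVa}; the measure condition is again secured by the $\pm$ split of Step~1.

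\textbf{Step 4 (conclusion and main obstacle).} Proposition~\ref{osclem1} applied to $v_k$ (or $-v_k$) gives $v_{k+1}^\star\le 2-\lambda^\star$ on $Q_{1/16}^\star$; combining this one step later with the complementary case, exactly as in \cite[Proposition~9]{CaVa}, closes the induction and yields $|u-\mu_k|\le 2a_k=2a_0(r_k/r_0)^{\alpha}$ on $B_{r_k}(x_k)\times[T-r_k^{2s},T]$ for all $k$. Given $X_i=(x_i,t_i)$ with $t_1>-1/2$ and $\|X_1-X_2\|_s\le\rho_0$, choose $T=\max\{t_1,t_2\}>-1/2$ and $k$ with $r_k\simeq\|X_1-X_2\|_s$; both points lie in the $k$‑th cylinder, so $|u(X_1)-u(X_2)|\lesssim a_0\,\|X_1-X_2\|_s^{\alpha}$, which is \eqref{z14'} with $\rho_0=r_0=C(M_2)/M_1\le 1/10$. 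The main obstacle is Step~3: the uniform‑in‑$k$ control of the rescaled drifts $\mathbf B_k$ — which is exactly why $\mathbb X^s$ is taken to be $BMO$ at the critical exponent $s=1/2$ and $C^{1-2s}$ below, these being the scale‑invariant classes for the parabolic scaling of \eqref{eq1} — together with the bookkeeping that keeps the nonlocal tail of $v_k$ under $\varepsilon_2$ through the whole iteration, which forces the reference cylinders to nest and hence the initial scale $r_0$ to be small relative to $M_1$.
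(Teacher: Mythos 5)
Your overall strategy is the same as the paper's (a De Giorgi oscillation-reduction iteration in a frame adapted to the drift, using Proposition \ref{osclem1} as the only quantitative input, scale invariance of $BMO$/$C^{1-2s}$, and a growth envelope to keep the nonlocal tail below $\varepsilon_2$), but there is a genuine gap at the heart of Step 2/Step 3(a): the way you remove the mean of the drift does not work. You recenter by a \emph{constant} shift per step, $x_{k+1}=x_k+r_k^{2s}\overline{\mathbf B}_k$, and simultaneously claim that the rescaled drift is $\mathbf B_k(y,\tau)=r_k^{2s-1}\big(\mathbf B(r_ky+x_k,\cdot)-\overline{\mathbf B}_k\big)$ with $\|\mathbf B_k\|_{L^\infty(-4,0;L^{d/s}(B_2))}\le C(d)M_2$. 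These two statements are incompatible. If $\overline{\mathbf B}_k$ is a fixed vector (time-independent), then a static translation is a legitimate change of variables, but it does not subtract anything from the drift at a fixed time: since the only temporal assumption on $\mathbf B$ is $L^\infty$ in $t$, the instantaneous averages $\fint_{B_{r_k}(x_k)}\mathbf B(\cdot,t)\,dy$ can fluctuate in $t$ by amounts of order $M_1+M_2\log(1/r_k)$, so no constant vector makes the recentred drift small in $L^\infty_tL^{d/s}(B_2)$; for $s<1/2$ the prefactor $r_k^{2s-1}\to\infty$ makes this fatal, and even for $s=1/2$ the constant $M_0$ in Proposition \ref{osclem1} would grow in $k$ (and depend on $M_1$), so $\lambda^\star$ and hence $\alpha$ would degenerate, destroying the structure $\rho_0=C(M_2)/M_1$ with constants depending only on $M_2$. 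If instead $\overline{\mathbf B}_k$ is the time-dependent average (which is what the John--Nirenberg/H\"older bound you invoke actually requires), then subtracting it is a Galilean change of frame, and your $v_k$, defined with a static center $x_k$, simply does not satisfy the equation you write for it.

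The paper resolves exactly this point with its ODE device: the center is a trajectory $x_k(t)$ solving $\dot x_k(t)=\fint_{B_4}b_{k-1}(\sigma y+\sigma^{2s}x_k(t),\sigma^{2s}t)\,dy$, $x_k(0)=0$, and the moving frame contributes the extra term $-\sigma^{2s-1}\dot x_k(t)$ to the new drift $b_k$, which then has zero spatial average over $B_4$ at \emph{every} time; only then do $BMO$ (at $s=1/2$) and $C^{1-2s}$ (at $s<1/2$) give the uniform bound $\|b_k\|_{L^\infty_tL^{d/s}(B_2)}\lesssim M_2$. This device also changes the bookkeeping downstream of it: instead of your strict nesting of cylinders, the paper quantifies $\sup_t|x_k(t)|+\sup_t|\dot x_k(t)|\lesssim\Theta(k,s,\sigma,M_1,M_2)$ (Step 1 of its proof), propagates a polynomial growth envelope $|F_k(x,t)|\le 2+c_0\lambda^\star\big(|\sigma^{1-2s}x+x_k(t)|^{2s/3}-1\big)_+$ by the Poisson representation to control the tail, and only at the end converts the drifted, rescaled oscillation decay into the H\"older bound at scales below $\rho_0\sim C(M_2)/M_1$. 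So the missing idea is not a minor technicality: without the time-dependent recentring (and the accompanying control of $x_k(t)$, $\dot x_k(t)$ and of the resulting drifted tail), your Steps 3(a) and 3(c) cannot be closed as stated.
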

\begin{proof} By Remark \eqref{ZA}, one has 
	\begin{equation}
		\sup_{t>-2}\int_{\mathbb{R}^d}u(t)^2dx+\int_{-2}^{\infty}\int_{\mathbb{R}^{d}}|(-\Delta)^{s/2} u|^2 dxdt + \|u\|_{L^{\infty}(\mathbb{R}^{d}\times (-1,\infty))}^2\leq C\varepsilon_0^2.
	\end{equation}
	In particular, 
	\begin{equation}\label{q1}
		|u(x, t)|\leq 1~~\text{in}~\mathbb{R}^d\times (-1,0].
	\end{equation}
	for $\varepsilon_0>0$ small enough. \\
	
	Now we put $b_{-1} \equiv \mathbf{B}$ and, for $k \geq 0$,
	\[
	 	b_k(y, t) = \sigma^{2s-1}b_{k-1}(\sigma y + \sigma^{2s}x_k(t)), \sigma^{2s} t) - \sigma^{2s-1}\dot{x}_k(t), 	\]
	where $\sigma < 1$ is a constant that will be chosen later, $x_k$ is the solution to problem
	\begin{equation*}
		\dot x_k(t)=\frac{1}{\mathcal{L}^{d}(B_4)}\int_{B_4}b_{k-1}(\sigma y + \sigma^{2s}x_k(t)), \sigma^{2s} t) dy, ~~ x_k(0) = 0.
	\end{equation*}
	We also define the sequences $\{g_k\}_{k=-1}^\infty$ and $\{F_k\}_{k=-1}^\infty$ defined by recursively as follows:
	\begin{itemize}
		\item $\displaystyle g_{-1} \equiv g, ~~  F_{-1}  \equiv u$ and
		\item  for every $k \geq 0$, 
		\begin{align*}
				g_k(y, t) = \widehat{\lambda}\sigma^{2s} g_{k-1}\Big(\sigma y + \sigma^{2s}x_k(t), \sigma^{2s} t\Big),
		\end{align*}
		\begin{align*}
		& F_k(y, t) = \frac{8}{8-\lambda^\star}\left[F_{k-1}\Big(\sigma y + \sigma^{2s}x_k(t), \sigma^{2s} t\Big) + \lambda^\star/4\right],  ~~ \text{or} \\
		\\
		& F_k(y, t) = \frac{8}{8-\lambda^\star}\left[F_{k-1}\Big(\sigma y + \sigma^{2s}x_k(t), \sigma^{2s} t\Big)-\lambda^\star/4\right],
		\end{align*}
	\end{itemize}
where $\lambda^\star\in (0,10^{-10})$ is the constant defined in Lemma \ref{osclem1}.  

\vspace{0.2cm}\noindent
Clearly, for any $k= 0,1, 2,...$, the function $F_k$ satisfies 
\begin{equation*}
	\partial_tF_k +b_k\nabla F_k +\mathcal{L}_{k,t} F_k=g_k,
\end{equation*}
where
\[
	\mathcal{L}_{j,t} f(y) = \int_{\mathbb{R}^d} (f(y) - f(y'))\mathbf{K}_{j,t}(y, y')dy',
\]
with
\[
	\mathbf{K}_{j,t}(y, y') = \sigma^{d+2s}\mathbf{K}_{j-1,t}\left(\sigma y -\sigma^{2s}x_j(t), \sigma y' - \sigma^{2s}x_j(t) \right).
\]
It is necessary to note that, for all $y, y' \in \mathbb{R}^d$, we have $\mathbf{K}_{j,t}(y, y') = \mathbf{K}_{j,t}(y', y)$ and
\[
	\frac{\Lambda^{-1}}{|y - y'|^{d+2s}} \leq \mathbf{K}_{j,t}(y, y') \leq \frac{\Lambda}{|y-y'|^{d+2s}},
\]

\quad On the other hand, for $k \geq 0$, we also have 
\begin{align*}
	\operatorname{div}b_k = 0,~~~\int_{B_4}b_k(y, t)dy=0, ~~ \|b_k(\cdot, t)\|_{\mathbb{X}_{s}} = \|\mathbf{B}(\cdot, \sigma^{2s(k+1)}t)\|_{\mathbb{X}_s}.
\end{align*}
In particular, it leads to
\begin{align*}
	\|b_k\|_{L^\infty((-1,0];L^{q}(B_4))}\leq CM_2, ~ \text{for all} ~ k\geq 0,  q\geq 1.
\end{align*}
Moreover, one can check that
\begin{equation}\label{abc1}
	g_k(y,t) = \left(\frac{8}{8-\lambda^\star}\right)^{k+1}\sigma^{2s(k+1)}g\Big(\sigma^{k+1}y+\sum_{m=0}^{k}\sigma^{k+2s-m}x_m(t), \sigma^{2s(k+1)} t\Big),
\end{equation}
and 
\begin{align}\label{bs1}
	F_k(y_1, t_1) - F_k(y_2, t_2) = & \left(\frac{8}{8-\lambda^\star}\right)^{k+1} u\Big(\sigma^{k+1}y_1 + \sum_{m=0}^{k}\sigma^{k+2s-m}x_m(t_1),\sigma^{2s(k+1)}t_1\Big) \\
	 & -  \left(\frac{8}{8-\lambda^\star}\right)^{k+1} u\Big(\sigma^{k+1}y_2 + \sum_{m=0}^{k}\sigma^{k+2s-m}x_m(t_2),\sigma^{2s(k+1)}t_2\Big). \nonumber
\end{align}

\noindent
\textbf{Step 1: (Boundedness of functions $x_k(t)$)} ~ We shall prove that, for any $k \geq 0$, the following estimate holds true
\begin{align}\label{abc2} 
	\sup_{t\in [-1,0]} |x_k(t)|+\sup_{t\in [-1,0]} |\dot x_k(t)| \lesssim  \Theta\big(k, s, \sigma, M_1, M_2\big),
\end{align}
where the constant $\Theta = \Theta\big(k, s, \sigma, M_1, M_2\big)$ given by
\begin{equation}\label{abc3}
	\Theta = \begin{cases}
		M_1 + M_2|\log\sigma|, & \text{if} ~~ s = \frac{1}{2}, ~ k = 0, \\
		M_2 + M_2|\log\sigma|, & \text{if} ~~ s = \frac{1}{2}, ~ k > 0, \\
		M_1 + M_2, & \text{if} ~~ s < \frac{1}{2}, ~ k = 0, \\
		M_2^{\frac{1}{2s}}, & \text{if} ~~ s < \frac{1}{2}, ~ k > 0.
	\end{cases}
\end{equation}

\quad First we consider the case $0 < s < 1/2$. Since $\mathbf{B} \in L^\infty\big(\mathbb{R}; C^{1-2s}(\mathbb{R}^d)\big)$ one has
\begin{multline*}
	|\dot{x}_0(t)|   = \left|\fint_{B_4}\mathbf{B}(\sigma y + \sigma^{2s}x_0(t), \sigma^{2s}t)dy \right| \\
	 = \left|\fint_{\sigma^{2s}x_0(t) + B_{4\sigma}} \mathbf{B}(y, \sigma^{2s} t)dy\right|  \leq  \left|\fint_{\sigma^{2s}x_0(t) + B_{1}} \mathbf{B}(y, \sigma^{2s} t)dy\right| \\
	 \qquad + \left|\fint_{\sigma^{2s}x_0(t) + B_{4\sigma}} \mathbf{B}(y, \sigma^{2s} t)dy - \fint_{\sigma^{2s}x_0(t) + B_{1}} \mathbf{B}(y, \sigma^{2s} t)dy \right|.
\end{multline*}
This implies $$|\dot{x}_0(t)|  \lesssim M_1 + (1 + \sigma^{1-2s})M_2.$$ Combining this and $x_k(0) = 0$ we obtain \eqref{abc2} for case $k=0$ and $s<1/2$.

\vspace{0.2cm}
\noindent For any $k \geq 1$, by using the fact $\displaystyle \int_{B_4}b_{k-1}(y, \sigma^{2s}t)dy = 0$, it follows that
\begin{align*}
	|\dot{x}_k(t)| & = \left|\fint_{\sigma^{2s}x_k(t) + B_{4\sigma}} b_{k-1}(y, \sigma^{2s} t)dy -  \fint_{B_{4}} b_{k-1}(y, \sigma^{2s} t)dy\right| 
						& \lesssim \left(1 +  |x_k(t)|^{1-2s}  \right)M_2.
\end{align*}
Since $x_k(0) = 0$ we get
\begin{align*}
	\sup_{t \in [-1, 0]}|x_k(t)| \lesssim M_2 + M_2\big(\sup_{t \in [-1, 0]}|x_k(t)|\big)^{1-2s}
\end{align*}
which implies $\sup_{t \in [-1, 0]}|x_k(t)|  \lesssim M_2^{1/2s}$. This will give \eqref{abc2} for case $k\geq 1$ and $s<1/2$.

\medskip 
\vspace{0.2cm}
\quad Next we consider the case $s = 1/2$. For $\delta < 1$ and $\xi \in \R^d$, we have
\begin{multline}
	\left|\fint_{\xi + B_\delta}B(y, t)dy\right|  \leq  \left|\fint_{\xi + B_1}B(y, t)dy\right| + \left|\fint_{\xi + B_\delta}B(y, t)dy - \fint_{\xi + B_{2^{[\log_2\delta]}}}B(y, t)dy\right| \label{bs.a02} \\
	 \qquad + \sum\limits_{j = 1 + [\log_2\delta]}^{0}\left|\fint_{\xi + B_{2^j}}B(y, t)dy - \fint_{\xi + B_{2^{j-1}}}B(y, t)dy\right|\\
	 \leq M_1 + 2^d\left(1 - \log_2\delta\right)M_2 \lesssim M_1 + |\log_2\delta|M_2. 
\end{multline}
It follows from \eqref{bs.a02} that
\begin{align*}
|\dot{x}_0(t)|  & = \left|\fint_{B_4}\mathbf{B}(\sigma y + \sigma x_0(t), \sigma t)dy \right| = \left|\fint_{\sigma x_0(t) + B_{4\sigma}} \mathbf{B}(y, \sigma  t)dy\right| \lesssim M_1 + |\log\sigma|M_2  .
\end{align*}
Combining this and $x_0(0) = 0$ we get \eqref{abc2}. Now for $k \geq 1$, since $\displaystyle \fint_{B_4}b_{k-1}(y,t)dy = 0$ it follows
\begin{align*}
	\left|\fint_{\sigma x_k(t) + B_{4\sigma}} b_{k-1}(y, \sigma t)dy\right| \leq 	\left|\fint_{\sigma x_k(t) + B_{4\sigma}} b_{k-1}(y, \sigma t)dy - \fint_{B_{4\sigma}} b_{k-1}(y, \sigma t)dy\right| \\
	+ \left|\fint_{B_{4\sigma}} b_{k-1}(y, \sigma t)dy - \fint_{B_{4}} b_{k-1}(y, \sigma t)dy\right|.
\end{align*}
On the other hand, by using \cite[Corollary 5.1.10]{Auscher} we have
\begin{align*}
	\left|\fint_{\sigma x_k(t) + B_{4\sigma}} b_{k-1}(y, \sigma t)dy - \fint_{B_{4\sigma}} b_{k-1}(y, \sigma t)dy\right| \lesssim \, \log_2(2 + |x_k(t)|),
\end{align*}
and, similarly as \eqref{bs.a02},
\begin{align*}
	\left|\fint_{B_{4\sigma}} b_{k-1}(y, \sigma t)dy - \fint_{B_{4}} b_{k-1}(y, \sigma t)dy\right| \leq (1 - \log_2\sigma)M_2.
\end{align*}
Hence we can obtain
\begin{align*}
	|\dot{x}_k(t)|  & = \left|\fint_{\sigma x_k(t) + B_{4\sigma}} b_{k-1}(y, \sigma t)dy\right| \lesssim \big[|\log\sigma| + \log(2 + |x_k(t)|)\big]M_2.
\end{align*}
Thanks to $x_k(0) = 0$ we deduce that
$$
\sup_{t \in [-1,0]} |x_k(t)| \lesssim \Big\{\big|\log\sigma\big| + \log(2 + \sup_{t \in [-1,0]} |x_k(t)|) \Big\}M_2 
$$
which implies
\begin{align*}
	\sup_{t\in [-1,0]} |x_k(t)| \lesssim |\log\sigma|M_2 + M_2\log M_2.
\end{align*}
So one gets
\begin{equation*}
	\sup_{t\in [-1,0]} |x_k(t)|+\sup_{t\in [-1,0]} |\dot x_k(t)| \lesssim |\log\sigma|M_2 + M_2\log M_2.
\end{equation*}

\vspace{0.2cm}\noindent
\textbf{Step 2: (Control the functions $F_k$)}  For any $k \geq 0$ we shall prove that
\begin{equation}\label{Z21}
	|F_{k}(x,t)| \leq 2 + c_0\lambda^\star\big(|\sigma^{1-2s} x + x_k(t)|^{\frac{2s}{3}}-1\big)_+ ~~\text{in}~\mathbb{R}^d\times (-1,0],
\end{equation}
and 
\begin{equation}\label{Z22}
|F_{k}(x,t)| \leq 2~~\text{in}~Q_{1/16},
\end{equation}
where $c_0$ will be choosen later. To do this we use the induction argument.

\vspace{0.2cm}\noindent
Let $(x, t) \in \mathbb{R}^d\times (-1,0]$. It follows from \eqref{q1} that
\begin{equation*}\label{Z20}
	|F_{0}(x, t)|\leq \frac{8}{8-\lambda^\star}\Big(1 + \frac{\lambda^\star}{4}\Big) \leq 2 \leq 2 +c_0\lambda^\star(|\sigma^{1-2s} x + x_0(t)|^{\frac{2s}{3}}-1)_+.
\end{equation*}
\vspace{0.2cm}\noindent
Similarly, by the definition of $F_1$ we imply
\begin{align*}
	|F_{1}(t,x)| & \leq  \frac{8}{8-\lambda^\star}\left\{\frac{8}{8-\lambda^\star}\Big(1+\frac{\lambda^\star}{4}\Big)+\frac{\lambda^\star}{4}\right\} \leq 2 \\
	& \leq  2 +c_0\lambda^\star\big(|\sigma^{1-2s} x + x_1(t)|^{\frac{2s}{3}}-1\big)_+.
\end{align*}
Assume that \eqref{Z21} and \eqref{Z22} hold for any $k\leq k_1$ for $k_1\geq 1$. We will need to prove
\begin{equation}\label{Z21e}
	|F_{k_1+1}(x,t)| \leq 2 + c_0\lambda^\star\big(|\sigma^{1-2s} x + x_{k_1+1}(t)|^{\frac{2s}{3}}-1\big)_+.
\end{equation}
and 
\begin{equation}\label{Z22b}
|F_{k_1+1}(x,t)| \leq 2~~\text{in}~Q_{1/16}.
\end{equation}
Indeed, it follows from Step 2 and the induction assumption that 
\begin{align*}
	|\mathbf{P}(F_{k_1})(x,z)|&\leq \int_{\mathbb{R}^d}P(x-y,z)\left[2 +c_0\lambda^\star\big(|\sigma y + \sigma^{2s}x_{k_1}(t)|^{\frac{2s}{3}}-1\big)_+\right] dy\\
	&\leq  2+ c_0\lambda^\star \int_{\mathbb{R}^d}P(x-y,z)|\sigma^{1-2s} y + x_{k_1}(t)|^{\frac{2s}{3}} dy\\
	& \leq  2+ c_0\lambda^\star \int_{\mathbb{R}^d}P(x-y,z)\big(|y|^{\frac{2s}{3}} + |x_{k_1}(t)|^\frac{2s}{3} \big)dy.
\end{align*}

By \eqref{abc2}-\eqref{abc3} and the definition of Poisson kernel it follows that
\begin{align*}
		|\mathbf{P}(F_{k_1})(x,z)| & \leq  2+ c_0\lambda^\star\left(C+  \Theta^\frac{2s}{3}\right) \leq 2+ 2c_0\lambda^\star C(M_2)|\log\sigma|.
\end{align*}
Moreover,  we also have, 
\begin{equation}
\int_{B^c_1} \frac{(F_{k_1}(x,t)-2)_+}{|x|^{d+2s}}dx \leq c_0\lambda^\star C(M_2)|\log\sigma|.
\end{equation}
Since $\|g_k\|_{L^q(Q_4)} \leq \|g\|_{L^q(Q_4)}$  we can apply  Lemma \eqref{osclem1}  with $c_0\lambda^\star C(M_2)|\log\sigma|\leq \varepsilon_2$ ( here $\varepsilon_2$ is the constant in Lemma \eqref{osclem1}) to the function $$(x, t) \mapsto \frac{F_{k_1}(x,t)}{1 + c_0\lambda^\star C(M_2)|\log\sigma|}$$ and get that 
\begin{equation*}
	F_{k_1} \leq (2-\lambda^\star)(1 + c_0\lambda^\star C(M_2)|\log\sigma|)~~\text{in}~~Q_{1/16},
\end{equation*} 
or 
\begin{equation*}
	- F_{k_1}\leq (2-\lambda^\star)(1 + c_0\lambda^\star C(M_2)|\log\sigma|)~~\text{in}~~Q_{1/16}.
\end{equation*} 
Taking $c_0 =\varepsilon_2 \left(4C(M_2)|\log\sigma|\right)^{-1}$ yields 
\begin{equation*}
	F_{k_1} \leq 2-\frac{\lambda^\star}{2} ~~\text{in}~~Q_{1/16},
\end{equation*} 
or 
\begin{equation*}
 - F_{k_1}\leq 2 - \frac{\lambda^\star}{2} ~~\text{in}~~Q_{1/16}.
\end{equation*}
In the case $F_{k_1} \leq 2-\lambda^\star/2 ~~ \text{in} ~ Q_{1/16}$ we shall set 
 \begin{align*}
	  F_{k_1+1}(y, t) =\frac{8}{8-\lambda^\star} \left(F_{k_1}\big(\sigma y + \sigma^{2s} x_{k_1+1}(t)), \sigma^{2s} t) + \frac{\lambda^\star}{4}\right),
 \end{align*}
	 and if $- F_{k_1} \leq 2-\lambda^\star/2 ~~ \text{in} ~ Q_{1/16}$, then we set 
\begin{align*}
	F_{k_1+1}(y, t) = \frac{8}{8-\lambda^\star} \left(F_{k_1}\big(\sigma y + \sigma^{2s} x_{k_1+1}(t), \sigma^{2s} t) - \frac{\lambda^\star}{4}\right).
\end{align*}
Clearly,
\begin{equation*}
	|F_{k_1+1}(y, t)|	\leq 2 ~~\text{if}~ |\sigma^{1-2s} y + x_{k_1+1}(t)| \leq \frac{1}{16\sigma^{2s}}.
\end{equation*}
Since \eqref{Z21} holds for $k=k_1$, 
\begin{equation*}
	\left|F_{k_1+1}(y, t)\right| \leq  \frac{8}{8-\lambda^\star} \left(2 + c_0\lambda^\star\big(|\sigma^{1-2s} (\sigma y + \sigma^{2s} x_{k_1+1}(t)) + x_{k_1}(\sigma^{2s}t)|^{\frac{2s}{3}}-1\big)_++ \frac{\lambda^\star}{4}\right).
\end{equation*}
Thus, it remains to show that, for any $t \in [-1, 0]$ and $y \in \mathbb{R}^d$ such that 
\[
|z| > \frac{1}{16\sigma^{2s}},
\]
we have
\begin{equation}\label{Z19}
	G(z, t) \leq  2 +c_0\lambda^\star \big(|z|^{2s/3}-1\big)_+.
\end{equation}
Here 
\[
	G(z, t) := \frac{8}{8-\lambda^\star} \left\{2 +c_0\lambda^\star\left(\big|\sigma z + x_{k_1}(\sigma^{2s} t)\big|^{2s/3}-1\right)_+ + \lambda^\star/4\right\},
\]
with $z = \sigma^{1-2s} y +x_{k_1+1}(t)$. Indeed, if $ |z| >  \frac{1}{16\sigma^{2s}}$ then
\begin{align*}
	G(y, t) & = 2+\frac{4\lambda^\star}{8-\lambda^\star}\left\{1+2c_0 \left(\big|\sigma z + x_{k_1}(\sigma^{2s} t)\big|^{2s/3}-1\right)_+\right\} \\ 
	& \overset{\eqref{abc2}}\leq  2 + \frac{4\lambda^\star}{8-\lambda^\star}\left\{1 + 2c_0\left(|\sigma z|^{2s/3} +  \Theta^{\frac{2s}{3}}\right)\right\}.
\end{align*}
This implies from \eqref{abc3} by choosing $\sigma = \sigma(M_2) \ll 1$ that
\[
	G(y, t) \leq 2 +  \frac{1}{2}c_0\lambda^\star|z|^{2s/3} \leq 2+c_0\lambda^\star \big(|z|^{2s/3}-1\big)_+
\]
This implies \eqref{Z19}.

\vspace{0.2cm}\noindent
\textbf{Step 3:} By \eqref{Z22b} in Step 2, we find that 
\begin{equation}
\operatorname{osc}_{Q_{\frac{1}{16}}} F_k \leq 4, ~~ \text{for} ~ k \geq 0.
\end{equation}
So, thanks to \eqref{bs1} one has for any $(y_1,t_1),(y_2,t_2)\in Q_{1/16}$
\begin{align}\label{Z23}
u\Big(\sigma^{k+1}y_1 +\sum_{m=0}^{k}\sigma^{k+2s-m}x_m(t_1),\sigma^{2s(k+1)}t_1\Big) \hspace{3cm} {\color{white} aaa} \\  -u\Big(\sigma^{k+1}y_2+\sum_{m=0}^{k}\sigma^{k+2s-m}x_m(t_2),\sigma^{2s(k+1)}t_2\Big) \hspace{1.5cm} {\color{white} aaa}  \nonumber \\
 \leq 4 \left(1 - \frac{\lambda^\star}{8}\right)^{k+1}. \nonumber
\end{align}

\vspace{0.2cm}\noindent
If $s = 1/2$ the from \eqref{abc2}  we imply that, if $-\sigma^{k+1} < t  < 0$, 
\begin{align*}
	\left|\sum_{m=0}^{k}\sigma^{k+2s-m}x_m(t)\right| & \leq \sigma^{k+1}\left\{ \sigma^{k+2s}\big(M_1+|\log(\sigma)|\big) + \sum_{m=1}^{k}\sigma^{k+2s-m} |\log\sigma|\right\}C(M_2) \\	
	&  \leq \frac{\sigma^{k+1}}{32},
\end{align*}
provided that $\sigma \ll_{M_2} 1$. Here we note that this is also true when $0 < s \leq 1/2$. Hence  it follows from \eqref{Z23} that
$$
	u(y_1,t_1)-u(y_2,t_2)\leq 4 \left(1 - \frac{\lambda^\star}{8}\right)^{k+1},
$$
for any $y_1,y_2\in B_{\sigma^{k+1}/32}$ and any $t_1, t_2\in \big(-\sigma^{2s(k+1)}/16,0\big)$. By choosing 
\begin{equation}
	\sigma = \sigma(M_2) \ll 1, \quad k \geq k_0 := \left[\frac{\log M_1}{|\log\sigma|}\right] + 1,
\end{equation}
we get for any $\rho < \rho_0 :=  \sigma^{k_0+1}/32\sim C(M_2)/M_1$ 
\begin{equation*} 
\operatorname{osc}_{B_{\rho}\times \big(-\rho^{2s},\, 0\big)} u\leq C\rho^{\alpha}, 
\end{equation*}
where
\[
	\alpha=\frac{\big|\log\big(1-\frac{\lambda^\star}{8}\big)\big|}{\big|\log\sigma|}.
\]
This gives 
\[
	\sup\limits_{|x_1 - x_2| + |t_1 - t_2|^{1/(2s)}< \rho_0, t_1>-1/2} \frac{\big|u(x_1, t_1) - u(x_2, t_2)\big|}{(|x_1 - x_2| + |t_1 - t_2|^{1/(2s)})^\alpha } \leq C.
\]

\vspace{0.2cm}
The proof of our theorem is completed.
\end{proof}
\begin{proof}[ Proof of Corollary \ref{XX1}] By Remark \ref{ZA}, one has 
	\begin{equation*}
	||u(1)||_{L^\infty(\mathbb{R}^d)}\leq C||u(0)||_{L^2(\mathbb{R}^d)}.
	\end{equation*}
 Since $u_\lambda(t,x)=\lambda^{-2s}u(\lambda^{2s} t,\lambda x)$ ($\lambda>0$) is also solution to \eqref{eq1} with drift term $$\mathbf{B}_\lambda(t,x)=\lambda^{1-2s}\mathbf{B}(\lambda^{2s}t,\lambda x),$$ it follows that
	\begin{equation*}
	||u_\lambda(1)||_{L^\infty(\mathbb{R}^d)}\leq C||u_
	\lambda(0)||_{L^2(\mathbb{R}^d)}.
	\end{equation*}
	This implies, for $t=\lambda^{2s}$,
	\begin{equation*}
	||u(t)||_{L^\infty(\mathbb{R}^d)}\leq Ct^{-\frac{d}{4s}} ||u(0)||_{L^2(\mathbb{R}^d)},
	\end{equation*}
	and we obtain \eqref{XX2}. \\
	
	By Theorem \ref{CaVa}, we have
	\begin{equation}
	\sup_{|x_1 - x_2| + |t_1 - t_2|^{1/(2s)} \leq \rho_0, \,\, t_1,t_2>1}\frac{|u(x_1, t_1) - u(x_2, t_2)|}{\big(|x_1 - x_2| + |t_1 - t_2|^{1/(2s)}\big)^\alpha}\leq C(M_2) ||u(0)||_{L^2(\mathbb{R}^d)},
	\end{equation}
	where $\rho_0=C_2(M_1)/M_1$.  We apply this estimate for $u_\lambda(t,x)=\lambda^{-2s}u(\lambda^{2s} t,\lambda x)$ and $$\mathbf{B}_\lambda(t,x)=\lambda^{1-2s}\mathbf{B}(\lambda^{2s}t,\lambda x)$$ to derive 
	\begin{equation}
	\sup_{|x_1 - x_2| + |t_1 - t_2|^{1/(2s)} \leq c_0, \, t_1,t_2>1}\frac{|u(\lambda^{2s} t_1,\lambda x_1) - u(\lambda^{2s}t_2,\lambda x_2)|}{\big(|x_1 - x_2| + |t_1 - t_2|^{1/(2s)}\big)^\alpha}\leq C(M_2) \lambda^{-d/2}||u(0)||_{L^2(\mathbb{R}^d)},
	\end{equation}
	with 
	\begin{equation}
	c_0=\frac{C(M_2,M_3)}{ \lambda^{1-2s}\sup\limits_{(x,t)\in \mathbb{R}^{d+1}_+}\left|\fint_{x + B_\lambda} \mathbf{B}(y, t)dy \right| +1}.
	\end{equation}
	
	\noindent
	{\bf Case $s=1/2$:} Since 
	\begin{equation}
	\sup\limits_{(x,t)\in \mathbb{R}^{d+1}_+}\left|\fint_{x + B_\lambda} \mathbf{B}(y, t)dy \right| \leq C (\log(|\lambda|)+1)M_2+M_1,
	\end{equation}
	it implies that
	\begin{equation}
	c_0 \geq \frac{C(M_2,M_1)}{|\log(|\lambda)|+1},
	\end{equation}
	and hence
	\begin{equation*}
	\sup_{|x_1 - x_2| + |t_1 - t_2| \leq \frac{C(M_2,M_1)\lambda}{|\log(|\lambda)|+1}, \,\, t_1,t_2>\lambda}\frac{|u(t_1, x_1) - u(t_2, x_2)|}{\big(|x_1 - x_2| + |t_1 - t_2|\big)^\alpha}\leq C(M_2) \lambda^{-d/2-\alpha}||u(0)||_{L^2(\mathbb{R}^d)}.
	\end{equation*}
	This implies \eqref{XX3}.
	
	\vspace{0.2cm}\noindent
	{\bf Case $0<s<1/2$:} Since  
	\begin{equation}
	c_0\geq \frac{C(M_3)}{\lambda^{1-2s}+1},
	\end{equation}
	we can deduce that
	\begin{multline*}
	\sup_{|x_1 - x_2| + |t_1 - t_2|^{1/(2s)}  \leq C(M_3)\min\{\lambda,\lambda^{2s}\},~~ t_1,t_2>\lambda^{2s}}\frac{|u(t_1, x_1) - u(t_2, x_2)|}{\big(|x_1 - x_2| + |t_1 - t_2|^{1/(2s)}\big)^\alpha} \\
	 \qquad\leq C(M_2) \lambda^{-d/2-\alpha}||u(0)||_{L^2(\mathbb{R}^d)}.
	\end{multline*}
	This implies \eqref{XX4}. The proof is complete.
\end{proof}

\setcounter{equation}{0}

\section*{Appendix A}

\qquad We state here a technical lemma which implies the crucial lemma (Lemma \ref{crucialLemma}.)

\medskip \medskip \noindent
{\bf Lemma A.} {\it 
For any $a_1,a_2\in \mathbb{R}$ and $b_1,b_2\in \mathbb{R}_+$, there holds
\begin{align}\label{Z4}
(b_1a_1-b_2a_2)^2+|a_1||a_2|(b_1-b_2)^2\sim  (a_1-a_2)(b_1^2a_1-b_2^2a_2)+  C|a_1||a_2|(b_1-b_2)^2
\end{align}
and 
\begin{align} \label{Z5}
&(b_1a_1-b_2a_2)	(b_1a_1^+-b_2a_2^+)+(|a_1|a_2^++a_1^+|a_2|)(b_1-b_2)^2\\&~~\sim  (a_1-a_2)(b_1^2a_1^+-b_2^2a_2^+)+  C(|a_1|a_2^++a_1^+|a_2|)(b_1-b_2)^2\nonumber
\end{align}
for some constant $C>0$.
}

\begin{proof} In order to prove \eqref{Z4}, without loss of generality, we can assume that $|a_1| \geq |a_2|$. Then we have    
	\begin{align*}
	 (a_1 - a_2)(b_1^2a_1 - b_2^2a_2) & = (a_1 - a_2)(b_1^2a_1 - b_1b_2a_2 + b_1b_2a_2 - b_2^2a_2) \\
	 & = (b_1a_1 - b_1a_2)(b_1a_1 - b_2a_2) +(b_2a_1 - b_2a_2)a_2(b_1 - b_2) \\
	 & = (b_1a_1-b_2a_2)^2+(b_2-b_1)a_2(b_1a_1-b_2a_2) \\
	 & \quad \,\, + (b_1a_1-b_2a_2)a_2(b_1-b_2)-a_1a_2(b_1-b_2)^2.
	\end{align*}
	Thanks to Holder's inequality, we get that 
	\begin{align*}
	\left|	(a_1-a_2)(b_1^2a_1-b_2^2a_2)-(b_1a_1-b_2a_2)^2\right|\leq \frac{1}{10}(b_1a_1-b_2a_2)^2+C(|a_2|^2+|a_1||a_2|)(b_1-b_2)^2.
	\end{align*}
	This implies \eqref{Z4}.  
	
	\vspace{0.2cm}
	Since \eqref{Z4} holds true, we only need to check \eqref{Z5} in case $a_1\geq 0>a_2$. Indeed, one has 
	\begin{align*}
	(a_1-a_2)(b_1^2a_1^+-b_2^2a_2^+) & = (b_1a_1-b_1a_2)b_1a_1^+ \\
	& = (b_1a_1-b_2a_2)b_1a_1^++(b_2-b_1)a_2b_1a_1^+\\ 
	& = (b_1a_1-b_2a_2)b_1a_1^+-(b_2-b_1)^2a_2a_1^++(b_2-b_1)a_2a_1^+b_2.
	\end{align*}
	Hence
	\begin{align*}
	&	\left|(a_1-a_2)(b_1^2a_1^+-b_2^2a_2^+)-(b_1a_1-b_2a_2)b_1a_1^+\right|\\&~~~~~~~~\leq  (b_2-b_1)^2|a_2|a_1^++\min\{|(b_2-b_1)a_2a_1^+b_2|,|(b_2-b_1)a_2b_1a_1^+|\}\\&~~~~~~~~\leq C
	|a_2|a_1^+(b_1-b_2)^2+\frac{1}{10}|a_2|a_1^+ b_1b_2
	\\&~~~~~~~~\leq C
	|a_2|a_1^+(b_1-b_2)^2+\frac{1}{10}(b_1a_1-b_2a_2)b_1a_1^+.
	\end{align*}
	It follows \eqref{Z5} in case $a_1\geq 0>a_2$.
\end{proof}

\begin{proof}[Proof of Lemma \ref{crucialLemma}] We have 
	\begin{align*}
	&	\int_{\mathbb{R}^d} h^2 f^+ \mathcal{L}_t f dx=\frac{1}{2}\int_{\mathbb{R}^d}\int_{\mathbb{R}^d} \left(h(x)^2 f^+(x)-h(y)^2 f^+(y  \right)\left(f(x)-f(y)\right)\mathbf{K}_t(x,y)dxdy,\\&
	\int_{\mathbb{R}^d} h^2 f^+ (-\Delta)^s f dx=c_{d,s}\int_{\mathbb{R}^d}\int_{\mathbb{R}^d} \left(h(x)^2 f^+(x)-h(y)^2 f^+(y  \right)\left(f(x)-f(y)\right)\frac{dxdy}{|x-y|^{d+2s}}.
	\end{align*}
	Thus, the result follows from Lemma \ref{Z4}. The proof is complete. 
\end{proof}

\section*{Acknowledgements} This research is partly funded by University of Economics Ho Chi Minh City, Vietnam.

\bibliographystyle{alpha} 
\bibliography{biblio}

\end{document}